\newtheorem{thm}{Theorem}[section]
\newtheorem{theorem}[thm]{Theorem}
\newtheorem{corollary}[thm]{Corollary}
\newtheorem{lem}[thm]{Lemma}
\newtheorem{lemma}[thm]{Lemma}
\newtheorem{prop}[thm]{Proposition}
\newtheorem{proposition}[thm]{Proposition}
\theoremstyle{definition}
\newtheorem{defn}[thm]{Definition}
\newtheorem{example}[thm]{Example}
\newtheorem{definition}[thm]{Definition}
\newtheorem{rem}[thm]{Remark}  
\newtheorem{remark}[thm]{Remark}  
\DeclareMathOperator{\type}{type}
\DeclareMathOperator{\const}{const}
\newcommand{\DD}{\mathcal D}
\newcommand{\EE}{\mathcal E}
\newcommand{\AAA}{\mathcal A}
\newcommand{\WW}{\mathcal W}
\newcommand{\VV}{\mathcal V}
\newcommand{\zz}{\mathbf z}
\newcommand{\ttt}{\tau}
\newcommand{\indicator}{\mathbbm{1}}
\newcommand{\one}{\mathbbm{1}}
\newcommand{\N}{\mathbb N}
\title[Combinatorics of the two-species ASEP]{Combinatorics of 
the two-species ASEP
and Koornwinder moments}
\date{\today}
\author{Sylvie Corteel}
\address{Laboratoire d'Informatique Algorithmique: Fondements et Applications,
Centre National de la Recherche Scientifique et Universit\'e Paris Diderot,
Paris 7, Case 7014, 75205 Paris Cedex 13
France}
\email{corteel@liafa.univ-paris-diderot.fr}
\author{Olya Mandelshtam}
\address{Department of Mathematics,
University of California, Berkeley, CA}
\email{olya@math.berkeley.edu}
\author{Lauren Williams}
\address{Department of Mathematics,
University of California, Berkeley, CA}
\email{williams@math.berkeley.edu}
\thanks{SC was partially funded by the ``Combinatoire \`a Paris" projet
Emergences 2013--2017 and by ``ALEA Sorbonne" projet IDEX USPC. OM was partially supported by NSF grant DMS-1704874. LW was partially supported by
a Rose-Hills Investigator award and an NSF CAREER award.
All three authors are grateful for the support of the France-Berkeley fund.}
\begin{document}
\keywords{asymmetric exclusion process, Koornwinder polynomials,
staircase tableaux, rhombic staircase tableaux, Askey-Wilson
polynomials, Matrix Anstaz}

\begin{abstract}
In previous work \cite{CW-PNAS, CW-Duke1, CW-Duke2}, 
the first and third authors introduced 
staircase tableaux, which they used to give combinatorial formulas
for the stationary distribution of the 
asymmetric simple exclusion process (ASEP) and for the moments
of the Askey-Wilson weight function.  The fact that the
ASEP and Askey-Wilson moments are related at all is quite surprising,
and is due to \cite{USW} (see also \cite{CSSW} for a strengthening
of the relationship).  The ASEP is a model of particles hopping
on a one-dimensional lattice of $N$ sites with open boundaries; 
particles can enter and exit at both left and right borders.  It 
was introduced around 1970 \cite{bio, Spitzer} and is cited
as a model for both traffic flow and translation in protein synthesis.
Meanwhile, the Askey-Wilson polynomials are a famous family
of orthogonal polynomials in one variable; they sit at the top of 
the hierarchy of classical orthogonal polynomials.
So from this previous work, we have the relationship
$$\text{ ASEP } -- \text{ Askey-Wilson moments } -- \text{ staircase tableaux}.$$

\noindent It is well-known that Askey-Wilson polynomials can be viewed as the one-variable
case of the multivariate Koornwinder polynomials, which are also known as
the Macdonald polynomials for the non-reduced type BC root system.  
It is natural then to 
ask whether one can generalize the relationships among
the ASEP, Askey-Wilson moments, and staircase tableaux, in such a way that 
Koornwinder moments replace Askey-Wilson moments.
In \cite{CW-Koornwinder}, we showed that the relationship between Koornwinder moments
and the \emph{two-species ASEP} (a particle model involving two species of particles
with different ``weights'') is parallel to that between Askey-Wilson moments
and the ASEP.  In this article we introduce  \emph{rhombic staircase tableaux}, 
and show that we have the relationship
$$\text{ 2-species ASEP } -- \text{ Koornwinder moments } -- \text{ rhombic staircase tableaux}.$$
In particular, we give 
formulas for the steady state distribution of the two-species
ASEP and for  Koornwinder moments, in terms of 
rhombic staircase tableaux.   
\end{abstract}

\maketitle
\begin{minipage}{\textwidth}
\begin{minipage}{0.25\textwidth}
\centering
\includegraphics[width=1.6in]{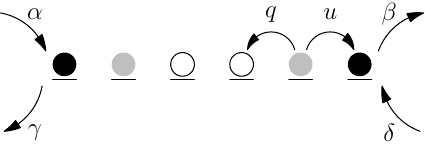}
\end{minipage}
\hspace{.3in}
\begin{minipage}{0.33\textwidth}
 \begin{center}
 \emph{Tableaux formulae\\ Describe hopping particles\\ Please excuse our proofs}
    \end{center}
\end{minipage}
\hspace{.3in}
\begin{minipage}{0.2\textwidth}
\includegraphics[height=1.2in]{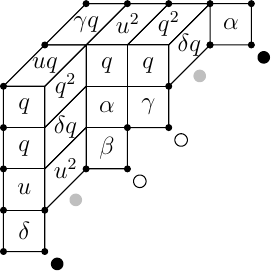}
\end{minipage}
\end{minipage}



\newpage

\setcounter{tocdepth}{1}
\tableofcontents

\section{Introduction}

Introduced in the late 1960's \cite{bio, Spitzer},
the asymmetric exclusion process (ASEP) is a model
of interacting
particles hopping left and right on a one-dimensional lattice of $N$
sites.
In the most general form of the ASEP with open boundaries,
particles may enter and exit at the left with probabilities
$\alpha$ and $\gamma$, and they may exit and enter at the right
with probabilities $\beta$ and $\delta$.  In the bulk,
the probability of hopping left is $q$ times the probability of hopping
right.
Since its introduction, there has been a huge amount of activity on the 
ASEP and its variants for a number of reasons:
for example, it exhibits
boundary-induced phase transitions, it is connected 
to orthogonal polynomials \cite{Sasamoto, USW} and the 
XXZ model \cite{Sandow, ER}, and it is regarded
as a primitive model for translation in protein synthesis \cite{bio}, 
the nuclear pore complex \cite{NPC1, NPC2}, traffic flow \cite{traffic},
and the formation of shocks \cite{shock}.
Moreover, dating back to at least 1982 \cite{SZ}, it was realized
that there were connections between the ASEP and combinatorics.  A main goal 
was to find a combinatorial description of the stationary distribution -- that 
is, to express each component of the stationary distribution as a generating
function for a set of combinatorial objects.
Our work \cite{CW-PNAS, CW-Duke1, CW-Duke2} introduced and used \emph{staircase tableaux}
to give a complete answer to this question, 
building on previous partial results of  
\cite{SZ,  
BE, jumping, Angel, BCEPR, Corteel, CW1, CW2, Viennot}.

A second reason that combinatorialists became intrigued by the ASEP was its link
to orthogonal polynomials \cite{Sasamoto}, and in particular to Askey-Wilson polynomials \cite{USW},
a family
of orthogonal polynomials which are at the top of the hierarchy of
classical orthogonal polynomials in one variable.  
Starting in the early 1980's, 
mathematicians including 
Flajolet \cite{Flajolet}, Viennot \cite{Viennot-book}, and
Foata \cite{Foata},
initiated a combinatorial
approach to orthogonal polynomials, which led to the discovery of 
various combinatorial formulas
for the moments of (the weight functions
of) many of the polynomials in the Askey scheme,
including
$q$-Hermite, Chebyshev, $q$-Laguerre, Charlier, Meixner, and Al-Salam-Chihara
polynomials, see e.g. \cite{IS, ISV, KSZ1,  MSW, SS}.
By combining our tableaux formulas for the stationary distribution of the ASEP with the known link of the 
ASEP and Askey-Wilson moments, we gave the first 
combinatorial formula for 
Askey-Wilson moments in 
\cite{CW-PNAS, CW-Duke1, CW-Duke2}.
In summary, we have the relationships of Figure \ref{AW_triangle}.
\begin{figure}[h]
\centering
\hspace{.9in} \includegraphics[height=.7in]{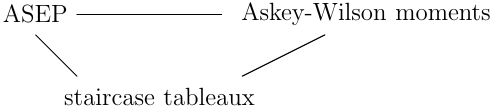}
\caption{}
\label{AW_triangle}
\end{figure}

%


It is well-known that Askey-Wilson polynomials can be viewed as a specialization of
the multivariate \emph{Macdonald-Koornwinder polynomials}, which are
also known as \emph{Koornwinder polynomials} \cite{Koornwinder},
or \emph{Macdonald polynomials for the type BC root system} \cite{Macdonald}.
(For brevity, we will henceforth call them Koornwinder polynomials.)
These polynomials
are particularly important because
the Macdonald polynomials associated to any classical root system
can be expressed as limits or special cases of Koornwinder
polynomials
\cite{vanDiejen}.   Since Koornwinder polynomials generalize
Askey-Wilson polynomials, it is natural to ask if one can 
find an analogue of Figure \ref{AW_triangle} with Koornwinder moments replacing Askey-Wilson moments.

In \cite{CW-Koornwinder}, we showed that there is a relationship between the
\emph{two-species ASEP} and 
\emph{homogeneous Koornwinder moments}  which is parallel
to that between the ASEP and Askey-Wilson moments.
The \emph{two-species ASEP} is a generalization of the ASEP which involves two different
kinds of particles, \emph{heavy} and \emph{light}.  Both types of particles can hop left and right
in a one-dimensional lattice of $N$ sites (heavy and light particles interact exactly as do particles and holes
in the usual ASEP), but only the heavy particles can enter and exit the lattice
at the left and right boundary.  So in particular, the number $r$ of light particles
is conserved.  When there are no light particles, the two-species
ASEP reduces to the usual ASEP.\footnote{Since the usual ASEP
has been cited as a model for e.g. traffic 
flow, it is natural to ask about ``real life" interpretations
of the two-species ASEP.  
To give one, let us consider
a $1D$ lattice of sites, with a chasm at either side. 
Each site is either empty or occupied by 
a kangaroo or rabbit.  The kangaroos and rabbits can hop left 
and right;  moreover, the 
kangaroos may hop over the rabbits, and over the chasms.
However, the rabbits cannot hop 
over the chasm, so their number is conserved.

To give another practical application, recall that the usual ASEP models the nuclear pore complex,
a multiprotein machine that manages the transport of material into 
and out of the nucleus \cite{NPC1, NPC2}, through a single-file
pore represented by a $1D$ lattice.  One may imagine a generalization
in which the nucleus is semipermeable; some types of 
material can travel in and out, while other types cannot.
}  
Recent work on the two-species ASEP includes 
\cite{Uchiyama, Ayyer1, Ayyer2, Ayyer3,M-V}.
Meanwhile, \emph{homogeneous Koornwinder moments} are defined to 
be the integrals of the homogeneous symmetric polynomials when one uses the  
Koornwinder measure.  (This is a natural multivariate generalization 
of the notion of Askey-Wilson moment
$\mu_n$, which
is the integral of $x^n$ when one uses the Askey-Wilson measure.)
The main result of \cite{CW-Koornwinder} was that the
homogeneous Koornwinder moments  are proportional to the 
fugacity partition function of the two-species ASEP.

In this paper, we introduce \emph{rhombic staircase tableaux}, and show that we have
a trio of relationships  (see Figure \ref{KW_triangle})
\begin{figure}[h]
\centering
\hspace{.2in} \includegraphics[height=.7in]{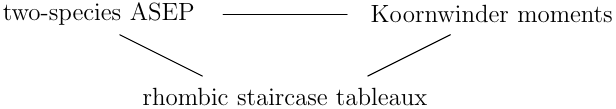}
\caption{}
\label{KW_triangle}
\end{figure}
which is completely parallel to those from Figure \ref{AW_triangle}.
In particular, we give formulas for the steady state probabilities of the two-species ASEP,
and for the homogeneous Koornwinder moments, as generating functions for 
rhombic staircase tableaux, see Theorem \ref{thm:main} and Theorem \ref{thm:moment}. 
It follows from Theorem \ref{thm:moment} that (up to a simple scalar factor) each homogeneous
Koornwinder moment can be written as a polynomial with positive coefficients.

We note that rhombic staircase tableaux 
simultaneously generalize staircase tableaux
(which correspond to the usual ASEP with no light particles)
and also the \emph{rhombic alternative tableaux} of Mandelshtam-Viennot \cite{M-V} (which correspond
to the two-species ASEP when $\gamma=\delta=0$).  Note that 
the lines making up the boxes of a staircase tableaux
are vertical and horizontal.  A beautiful insight of 
\cite{M-V} is that those two kinds of lines 
correspond to particles and holes, and that the combinatorial objects
describing the two-species ASEP should have three kinds of lines,
with vertical and horizontal lines corresponding to heavy particles
and holes as before, and diagonal lines corresponding to light particles,
which are somehow 
a cross between heavy particles and holes.
This led naturally to 
fillings of rhombic tilings.  
See also \cite{k-Mandelshtam} for a generalization of this 
idea to the $k$-species ASEP (when $\gamma=\delta=0$).

There has been some recent work which may be related to ours.
In \cite{BorodinCorwin}, Borodin and Corwin introduced 
\emph{Macdonald processes}, which are
probability measures on sequences of partitions defined in terms of nonnegative specializations
of the (type A) Macdonald symmetric functions, and showed that they are related to various interacting particle systems.
None of the particle systems they discuss is the ASEP (or two-species ASEP) with open boundaries, but perhaps there exists
some more general \emph{Koornwinder processes} which would be connected to the ASEP.

Additionally, the recent work of Cantini
\cite{Cantini} gave a link between the two-species ASEP to Koornwinder polynomials,
with a result very similar to the main result
of \cite{CW-Koornwinder}.
However, Cantini uses the partition
$(1^{N-r},0^r)$, and his techniques are completely different from ours;
e.g. he uses the affine Hecke algebra of type $\hat C_N$ as opposed
to the Matrix Ansatz and the combinatorics of Motzkin paths.

The structure of this paper is as follows.
In Section \ref{sec:2-species}, we define the two-species ASEP. Then
in Section \ref{sec:rhombic}, we introduce 
rhombic staircase tableaux and give our first main result, a combinatorial
formula for steady state probabilities of the two-species ASEP
in terms of rhombic staircase tableaux.
In Section \ref{sec:Koornwinder}, we give our second main result,
a combinatorial formula for homogeneous Koorwinder moments.
In Section \ref{sec:MA} we present a 
\emph{Matrix Ansatz} for the two-species ASEP
(generalizing one given by Uchiyama), which is an algebraic
tool for computing steady state probabilities.
In Section \ref{sec:matrices}, we define a family of matrices,
and explain how they can be thought of as ``transfer matrices''
for rhombic staircase tableaux.
Section \ref{sec:proof1}  
is devoted to the (rather long and technical) 
proof of our first main result.
The proof of our second main result follows reasonably quickly
from the first one, and is given in Section \ref{sec:momentproof}.

\textsc{Acknowledgments:}
We would like to thank Mark Haiman, who pointed out to the
first and third authors in 2007 that
Koornwinder polynomials generalize the Askey-Wilson polynomials, and asked us
if we could make a connection between Koornwinder polynomials and some
generalization of the ASEP.  We would also like to thank Eric Rains,
for several very useful conversations which led to the 
definition of Koornwinder moments.

\section{The two-species ASEP}\label{sec:2-species}

We start by defining the asymmetric exclusion process (ASEP)
with open boundaries.  We will then define the two-species ASEP,
which generalizes the usual ASEP.  

\subsection{The asymmetric exclusion process (ASEP)}

\begin{definition}
Let $\alpha$, $\beta$, $\gamma$, $\delta$,  $q$, and $u$ be constants such that
$0 \leq \alpha \leq 1$, $0 \leq \beta \leq 1$,
$0 \leq \gamma \leq 1$, $0 \leq \delta \leq 1$,
$0 \leq q \leq 1$,
and $0 \leq u \leq 1$.
Let $B_N$ be the set of all $2^N$ words in the
language $\{\circ, \bullet\}^*$.
The \emph{ASEP} is the Markov chain on $B_N$ with
transition probabilities:
\begin{itemize}
\item  If $X = A\bullet \circ B$ and
$Y = A \circ \bullet B$ then
$P_{X,Y} = \frac{u}{N+1}$ (particle hops right) and
$P_{Y,X} = \frac{q}{N+1}$ (particle hops left).
\item  If $X = \circ B$ and $Y = \bullet B$
then $P_{X,Y} = \frac{\alpha}{N+1}$ (particle enters from the left).
\item  If $X = B \bullet$ and $Y = B \circ$
then $P_{X,Y} = \frac{\beta}{N+1}$ (particle exits to the right).
\item  If $X = \bullet B$ and $Y = \circ B$
then $P_{X,Y} = \frac{\gamma}{N+1}$ (particle exits to the left).
\item  If $X = B \circ$ and $Y = B \bullet$
then $P_{X,Y} = \frac{\delta}{N+1}$ (particle enters from  the right).
\item  Otherwise $P_{X,Y} = 0$ for $Y \neq X$
and $P_{X,X} = 1 - \sum_{X \neq Y} P_{X,Y}$.
\end{itemize}
\end{definition}

See Figure \ref{states} for an
illustration of the four states, with transition probabilities,
for the case $N=2$.  The probabilities on the loops
are determined by the fact that the sum of the probabilities
on all outgoing arrows from a given state must be $1$.

\begin{figure}[h]
\centering
\includegraphics[height=1.8in]{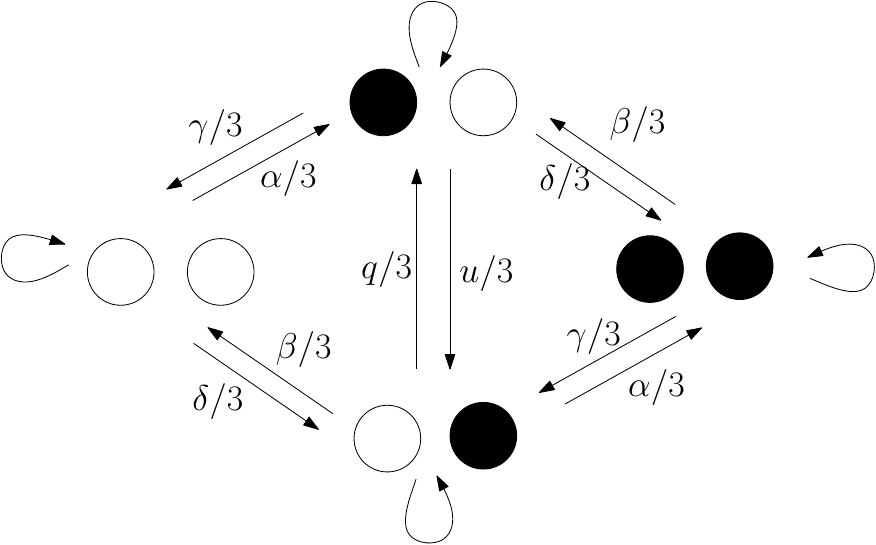}
\caption{The state diagram of the ASEP for $N=2$}
\label{states}
\end{figure}

In the long time limit, the system reaches a steady state where all
the probabilities $P_N(\ttt)$ of finding
the system in configurations $\tau=(\ttt_1, \ttt_2, \dots , \ttt_N)$ are
stationary.  More specifically, the {\it stationary distribution}
is the unique (up to scaling) eigenvector of the transition
matrix of the Markov chain with eigenvalue $1$.

\subsection{The two-species ASEP}

The two-species ASEP is a generalization of the ASEP which involves two kinds of 
particles, \emph{heavy} and \emph{light}.  We will denote a heavy particle by a $2$
and a light particle by a $1$.  We will also denote a hole (or the absence of a particle)
by a $0$.  In the two-species ASEP, heavy particles behave exactly as do
particles in the usual ASEP: they can hop in and out of the boundary, and they can
hop left and right in the lattice (swapping places with a hole or with a heavy particle).
Light particles cannot hop in and out of the boundary, but they may hop
left and right in the lattice (swapping places with a hole or with a light particle).
Therefore the number $r$ of light particles is conserved.

\begin{definition}
Let $\alpha$, $\beta$, $\gamma$, $\delta$,  $q$, and $u$ be constants such that
$0 \leq \alpha \leq 1$, $0 \leq \beta \leq 1$,
$0 \leq \gamma \leq 1$, $0 \leq \delta \leq 1$,
$0 \leq q \leq 1$,
and $0 \leq u \leq 1$.
Let $B_{N,r}$ be the set of all words in
$\{0,1,2\}^N$ which contain precisely $r$ $1$'s;
note that $|B_{N,r}| = {N \choose r} 2^{N-r}$.
The \emph{two-species ASEP} is the Markov chain on $B_{N,r}$ with
transition probabilities:
\begin{itemize}
\item  If $X = A 2 1 B$ and
$Y = A 1 2 B$, or
if $X = A 2 0 B$ and $Y = A 0 2 B$, or
if $X = A 1 0 B$ and $Y = A 0 1 B$,
then
$P_{X,Y} = \frac{u}{N+1}$  and
$P_{Y,X} = \frac{q}{N+1}$.
\item  If $X = 0 B$ and $Y = 2 B$
then $P_{X,Y} = \frac{\alpha}{N+1}$.
\item  If $X = B 2$ and $Y = B 0$
then $P_{X,Y} = \frac{\beta}{N+1}$.
\item  If $X = 2 B$ and $Y = 0 B$
then $P_{X,Y} = \frac{\gamma}{N+1}$.
\item  If $X = B 0$ and $Y = B 2$
then $P_{X,Y} = \frac{\delta}{N+1}$.
\item  Otherwise $P_{X,Y} = 0$ for $Y \neq X$
and $P_{X,X} = 1 - \sum_{X \neq Y} P_{X,Y}$.
\end{itemize}
\end{definition}

See Figure \ref{states2} for an illustration of the state diagram of the 
two-species ASEP on a lattice of $N=2$ sites, with $r=1$ light particle.

\begin{figure}[h]
\centering
\includegraphics[height=1.8in]{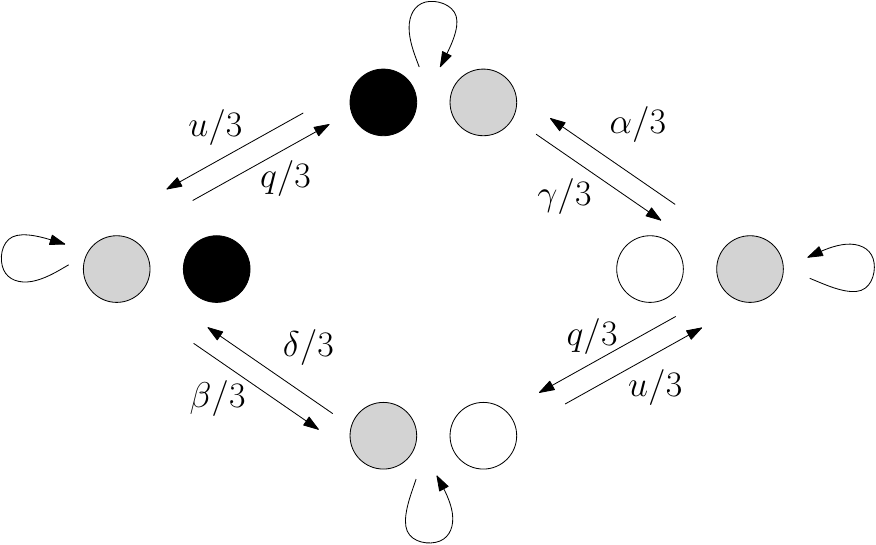}
\caption{The state diagram of the two-species ASEP for $N=2$ and $r=1$.
Light particles are denoted in grey, and heavy particles are denoted
in black.}
\label{states2}
\end{figure}

Note that if $r=0$, i.e. there are no light particles,
then the two-species ASEP is simply the usual ASEP.


\section{Rhombic staircase tableaux 
and the two-species ASEP}\label{sec:rhombic}

In this section we will define \emph{rhombic staircase tableaux}, 
which are the main combinatorial objects used in this paper.
We will then state our main result, which is a formula for
the steady state distribution of the two-species ASEP
in terms of rhombic staircase tableaux.

\begin{definition}
Let $\tau=(\tau_1,\dots,\tau_N)\in \{0,1,2\}^N$.
For $0 \leq i \leq 2$, we 
let $|\tau|_{i}$ denote the number of $i$'s in $\tau$.
We also let $|\tau|_{02}$ denote the total number of $0$'s and $2$'s
in $\tau$.  Set $r = |\tau|_1$.
The \emph{rhombic diagram} $\Gamma(\tau)$ associated to $\tau$
is a convex shape whose southeast border is obtained by 
reading the components of $(\tau_1,\dots,\tau_N)$ from left to 
right, and replacing each $0$ or $2$ by a \emph{corner}
(a south step followed by a west step), and replacing each 
$1$ by a diagonal step southeast.  The northwest border consists
of $N-r$ west steps, followed by $r$ southeast steps, followed
by $N-r$ south steps.
We say that this rhombic diagram has \emph{size} $(N,r)$.
\end{definition}

\begin{example}
Suppose $\tau = (1, 2, 0, 1, 0)$.  Then $N=5$, $r=2$,
and $\Gamma(\tau)$ is as in Figure \ref{shape}.
\begin{figure}[h]
\centering
\includegraphics[height=1.4in]{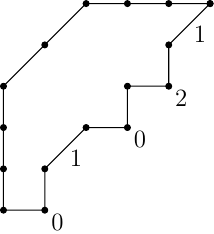}
\caption{The rhombic diagram $\Gamma(1,2,0,1,0)$.}
\label{shape}
\end{figure}
\end{example}

\begin{definition}
A \emph{tile} is a \emph{square}, a \emph{short rhombus}, or a
\emph{tall rhombus}, as shown at the left of Figure \ref{tiles}.
\end{definition}
\begin{figure}[h]
\centering
\includegraphics[height=.6in]{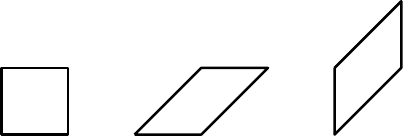} \hspace{2cm}
\includegraphics[height=1.1in]{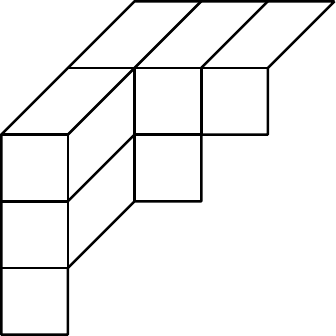}
\caption{At the left: a square, a short rhombus, and a tall rhombus.
At the right: the maximal tiling of $\Gamma(1,2,0,1,0)$.}
\label{tiles}
\end{figure}

\begin{definition}\label{defn:maxtiling}
Let $\tau = (\tau_1,\dots,\tau_N)\in \{0,1,2\}^N$ and 
let $r=|\tau|_1$.
The \emph{maximal tiling} of $\Gamma(\tau)$ is a tiling of
$\Gamma(\tau)$ by squares, short rhombi, and tall rhombi, 
which is obtained as follows. For each $0 \leq j \leq N$,
we construct a column of tiles: 
if $\tau_j=1$, 
this column consists of $|(\tau_1,\dots,\tau_{j-1})|_{02}$
tall rhombi; and   
if $\tau_j = 0$ or $2$,
then this column consists of $1+|(\tau_1,\dots,\tau_{j-1})|_{02}$
squares at the bottom, and $|(\tau_1,\dots,\tau_{j-1})|_1$
short rhombi at the top.  We refer to these columns as
\emph{$1$-columns} and \emph{$02$-columns}, respectively.
We then concatenate these columns from right to left so 
that they comprise a tiling of $\Gamma(\tau)$.  See the rightmost picture in Figure \ref{tiles}.
\end{definition}

\begin{definition}
A \emph{vertical strip} is a maximal connected set of tiles
where two adjacent tiles share a horizontal edge, see 
the first (leftmost) picture in Figure \ref{strips}.  Note 
that 
the $02$-columns from Definition \ref{defn:maxtiling}
are all vertical strips.
A \emph{horizontal strip} is 
a maximal connected set of tiles
where two adjacent tiles share a vertical edge, see the second picture in
Figure \ref{strips}.  The shaded regions in the third and fourth pictures in Figure \ref{strips}
show a vertical and horizontal strip within the maximal tiling of $\Gamma(1,2,0,1,0)$.
\end{definition}
\begin{figure}[h]
\centering
\includegraphics[height=1.1in]{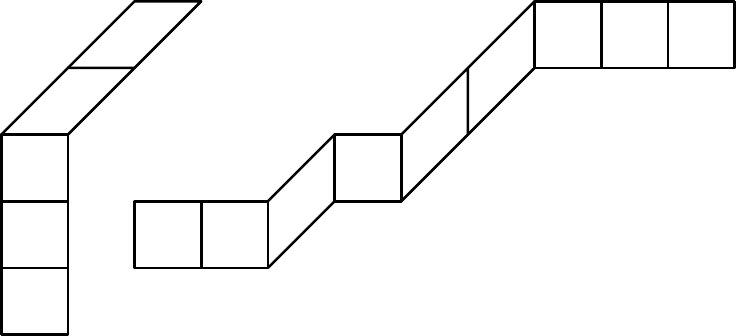} \hspace{1cm}
\includegraphics[height=1.1in]{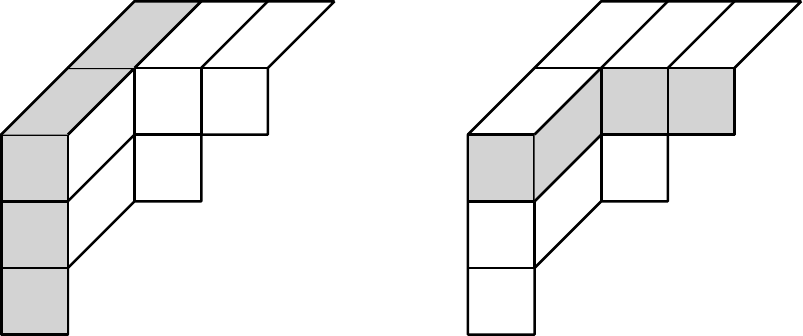} 
\caption{Vertical and horizontal strips.}
\label{strips}
\end{figure}

\begin{remark}
The horizontal strips of the maximal tiling of $\Gamma(\tau)$ where
$\tau = (\tau_1,\dots,\tau_N)$ are in bijection with the corners of 
$\Gamma(\tau)$, or in other words with 
$\{\tau_i \in \tau \ \vert \ \tau_i = 0 \text{ or } 2\}$.
\end{remark}

\begin{definition}
Let $\tau = (\tau_1,\dots,\tau_N)\in \{0,1,2\}^N$.
A \emph{rhombic staircase tableau} $T$ of \emph{shape $\Gamma(\tau)$} 
is a filling of the tiles of the maximal tiling of $\Gamma(\tau)$
such that:
\begin{itemize}
\item Each square is either empty, or 
contains an $\alpha$, $\beta$, $\gamma$, or $\delta$.
\item Each tall rhombus is either empty, or 
contains a $\beta u$ or $\delta q$.
\item Each short rhombus is either empty, or 
contains an $\alpha u$ or $\gamma q$.
\item The lowest square (if there is one) 
in each vertical strip is not empty.
\item Every tile which is above,
and in the same vertical strip as,
 an $\alpha$ or $\gamma$ 
must be empty.
\item Every tile which is left of,
and in the same horizontal strip as,
 a $\beta$ or $\delta$
 must be empty.
\end{itemize}
We say that a rhombic staircase tableau has \emph{size} $(N,r)$
if its shape has size $(N,r)$.
Let us number the components (i.e. the corners and diagonal steps)
of the southeast border from $1$ to $N$.  
If for $1 \leq i \leq N$ we have that 
$\tau_i = 2$ (respectively, $\tau_i=0$) 
if and only if
the corresponding component of the southeast border is a 
corner box containing an $\alpha$ or $\delta$
(resp.\ $\beta$ or $\gamma$),
then we say that the tableau $T$ has 
\emph{type} $\tau$, and we write $\type(T) = \tau$.

\end{definition}
See the left of Figure \ref{filling} for an example of a rhombic staircase tableau
of type $(1,0,0,1,2)$.
\begin{figure}[h]
\centering
\includegraphics[height=1.5in]{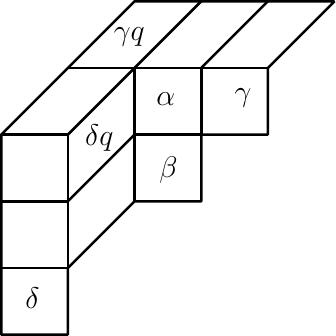} \hspace{1cm}
\includegraphics[height=1.5in]{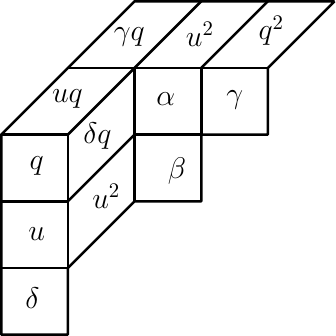} 
\caption{At the left: a rhombic staircase tableau $T$ of 
type $(1,0,0,1,2)$.   
At the right: the same rhombic staircase tableau, with empty tiles filled
according to Definition \ref{defn:weight}.}
\label{filling}
\end{figure}

\begin{definition}\label{defn:weight}
The \emph{weight} $w(T)$ of a rhombic staircase
tableau $T$ is a monomial in $\alpha, \beta, \gamma, \delta, q$,
and $u$, which we obtain as follows.  Every empty tile of $T$ is 
assigned a monomial in $q$ and $u$, based on the label of 
the closest labeled tile to its right in the same 
horizontal strip (if the tile lies in a horizontal strip),
and the label of the closest labeled tile below it in the same
vertical strip (if the tile lies in a vertical strip), such that:
\begin{itemize}[leftmargin=.4cm]
\item each empty square which sees a $\beta$ (resp.\, $\delta$)
to its 
right gets assigned a $u$ (resp. $q$);
\item each empty square which sees an $\alpha$ or $\gamma$
to its right, and an $\alpha$ or $\delta$ below, gets
 a $u$;
\item each empty square which sees an $\alpha$ or $\gamma$
to its right, and a $\beta$ or $\gamma$ below, gets
a $q$;
\item each empty tall rhombus which sees a $\beta$ (resp. $\delta$) to the 
right gets a $u^2$ (resp. $q^2$);
\item each empty tall rhombus which sees an $\alpha$ or $\gamma$ to the 
right gets  a $uq$;
\item each empty short rhombus which sees an $\alpha$ (resp. $\gamma$) below
it gets  a $u^2$ (resp. $q^2$);
\item each empty short rhombus which sees a $\beta$ or $\delta$ below
it gets a $uq$.
\end{itemize}
See Figures \ref{mnemonic1} and \ref{mnemonic2} for a pictorial summary of these rules.
After assigning a monomial in $q$ and $u$ to each empty tile 
in this way, the \emph{weight} of $T$ is then defined as the 
product of all labels in all tiles.

\begin{figure}[h]
\centering
\includegraphics[height=1in]{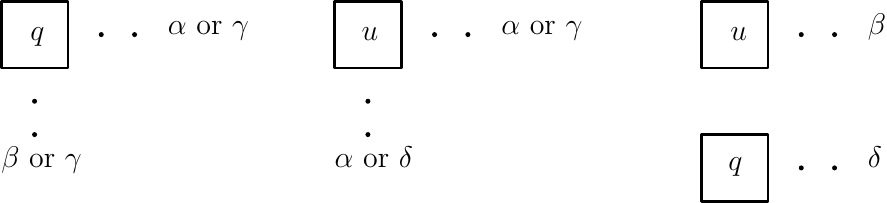} 
\caption{How to assign a $u$ or $q$ to the empty squares of a rhombic staircase tableau.}
\label{mnemonic1}
\end{figure}
\begin{figure}[h]
\centering
\includegraphics[height=.5in]{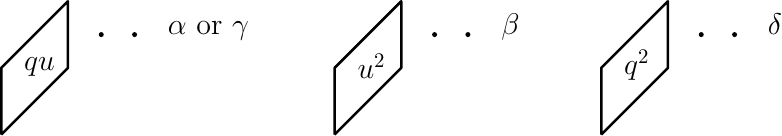} \hspace{.8cm}
\includegraphics[height=.8in]{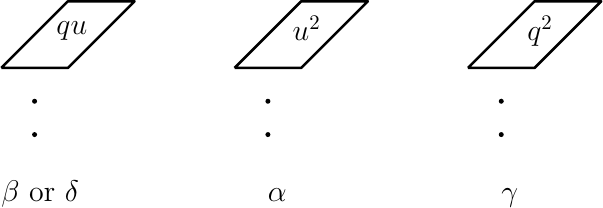} 
\caption{How to assign a monomial in $u$ and $q$ to the empty rhombi of a rhombic staircase tableau.}
\label{mnemonic2}
\end{figure}
\end{definition}
\begin{example}
Let $T$ be the rhombic staircase tableau at the left of Figure \ref{filling}.
Then if we fill the empty boxes according to Definition \ref{defn:weight}, we obtain
the filling shown at the right of Figure \ref{filling}.  We find that 
the weight $w(T)$ of $T$ is 
$\alpha \beta \gamma^2 \delta^2 u^6 q^6$.
\end{example}

\begin{remark}
The weight of a rhombic staircase tableau of size $(N,r)$
always has degree $\frac{(N-r)(N+3r+1)}{2} = {N-r+1 \choose 2} + 2r(N-r)$.  One can see this by using Definition
\ref{defn:maxtiling}, which implies that the maximal tiling of a shape of size $(N,r)$ contains 
${N-r+1 \choose 2}$ squares and $r(N-r)$ rhombi.  
For convenience, we will sometimes set $u=1$, since this results in no loss of information.
\end{remark}

The main result of this paper is the following.  Note that 
it will be a direct consequence of 
Theorem \ref{thm:MA}, 
Theorem \ref{comb-interpret}, 
and Theorem \ref{thm:comb-ansatz}.

\begin{theorem}\label{thm:main}
Consider the two-species ASEP on a lattice of $N$ sites with precisely $r$ 
light particles.
Let $\tau$ be any state; in other words,
$\tau=(\tau_1,\dots,\tau_N) \in \{0,1,2\}^N$, and $|\tau|_1 = r$.  
Set $\mathbf{Z}_{N,r} = \sum_T w(T)$, where the sum is over all rhombic staircase tableaux $T$
of size $(N,r)$.  Then the steady state probability that the two-species ASEP
is at state $\tau$ is precisely $$\frac{\sum_T w(T)}{\mathbf{Z}_{N,r}},$$
where the sum is over all rhombic staircase tableaux $T$ of type $\tau$.
\end{theorem}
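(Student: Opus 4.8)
The plan is to prove Theorem~\ref{thm:main} by verifying that the generating function $f(\tau) := \sum_{\type(T)=\tau} w(T)$ satisfies the Matrix Ansatz for the two-species ASEP, namely the algebraic relations asserted in Theorem~\ref{thm:MA}. By the uniqueness of the stationary distribution (up to scaling), it suffices to exhibit matrices $D, E, A$ and vectors $\langle W|, |V\rangle$ obeying the Matrix Ansatz relations and to show that $\langle W| \cdots |V\rangle$ with one factor per site of $\tau$ reproduces $f(\tau)$, so that $f$ is (proportional to) the steady state. Concretely, I would introduce the ``transfer matrices'' of Section~\ref{sec:matrices} — a family of matrices whose $(i,j)$ entries encode the allowed column-to-column transitions in the maximal tiling together with the $q,u,\alpha,\beta,\gamma,\delta$ labels — and argue that a product of these matrices along $\tau$, sandwiched between suitable boundary vectors, is exactly the sum of weights of rhombic staircase tableaux of type $\tau$. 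This is the content of Theorem~\ref{comb-interpret}: a bijective/transfer-matrix reading of tableaux as lattice paths through the columns.

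The key steps, in order: (1) Set up the Matrix Ansatz (Theorem~\ref{thm:MA}) for the two-species ASEP, identifying precisely which algebraic relations among $D = \mathbf{1} + d$, $E = \mathbf{1} + e$, and $A$ must hold, and what the boundary conditions $\langle W|$ and $|V\rangle$ must satisfy (generalizing Uchiyama's ansatz, as promised in Section~\ref{sec:MA}). (2) Define the candidate matrices and vectors from Section~\ref{sec:matrices}; this requires care because the rhombi (tall and short) and the squares play different roles, so the matrices will be block/infinite in a way that tracks ``height'' in a vertical strip, and the light particles contribute a separate matrix $A$. (3) Prove Theorem~\ref{comb-interpret}: that the matrix product reproduces $\sum_{\type(T)=\tau} w(T)$. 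The natural way is to expand the matrix product entrywise, interpret each term as a choice of fillings column-by-column subject to the constraints ``lowest square nonempty,'' ``nothing above an $\alpha/\gamma$,'' ``nothing left of a $\beta/\delta$,'' and to check that the $q/u$ assignment rules of Definition~\ref{defn:weight} fall out of the matrix entries exactly. (4) Prove Theorem~\ref{thm:comb-ansatz}: that these matrices actually satisfy the relations from step (1); this is a finite computation with the (possibly infinite but structured) matrices, and is where the precise numerics of the weight rules — the $u$ vs.\ $q$, the $u^2, q^2, uq$ on rhombi — get pinned down.

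I expect the main obstacle to be step (3)–(4) together: showing that one fixed family of matrices simultaneously (a) has a clean combinatorial expansion matching rhombic staircase tableaux and (b) satisfies the Matrix Ansatz relations. The tension is that the tableaux are defined by somewhat delicate local rules (especially the interaction of horizontal strips through diagonal steps, i.e.\ how short rhombi mediate between a $1$-column and the $02$-columns to its left), and the Matrix Ansatz relations $dD E - $ (something) $= \dots$ must hold on the nose with the correct powers of $q$ and $u$. In the one-species case this is already the technical heart of \cite{CW-Duke1,CW-Duke2}; here one must additionally handle the conservation of light particles via the matrix $A$ and the bulk relations $d A = q A d + \dots$ that encode heavy/light swaps. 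I would therefore organize the proof so that the bulk of Section~\ref{sec:proof1} is a careful case analysis establishing Theorem~\ref{comb-interpret} (reading tableaux off matrix products), after which Theorem~\ref{thm:comb-ansatz} reduces to checking a manageable list of matrix identities; combining these with Theorem~\ref{thm:MA} yields Theorem~\ref{thm:main} immediately. Once $f(\tau)$ is shown to satisfy the Matrix Ansatz, normalizing by $\mathbf{Z}_{N,r} = \sum_{|\tau|_1 = r} f(\tau)$ gives the stated steady state probability.
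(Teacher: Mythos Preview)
Your proposal is correct and follows precisely the paper's approach: Theorem~\ref{thm:main} is deduced by combining Theorem~\ref{thm:MA}, Theorem~\ref{comb-interpret}, and Theorem~\ref{thm:comb-ansatz}, exactly as you outline. One caveat on emphasis: contrary to your expectation, Theorem~\ref{comb-interpret} is the easy direction (a short verification that the defining recurrences for $D^{(t)}, E^{(t)}, A$ match the column-by-column building of tableaux, Lemma~\ref{comb-lemma}), while Theorem~\ref{thm:comb-ansatz} is the long technical core of Section~\ref{sec:proof1}, requiring auxiliary column generating functions $F^{(t)}, G^{(t)}$ and a nested induction rather than a ``manageable list of matrix identities.''
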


\begin{example}
Consider the two-species ASEP on a lattice of $2$ sites with precisely $1$
light particle.  Let $\tau$ be the state $(2,1)$.  There are four tableaux of type
$\tau$; these are shown in Figure \ref{state21}.  Therefore the steady state 
probability that the two-species ASEP is at state $\tau$ is 
equal to $\frac{\alpha \beta u + \alpha \delta q + \alpha u q + \delta q^2}{\mathbf{Z}_{2,1}}.$
\begin{figure}[h]
\centering
\includegraphics[height=.7in]{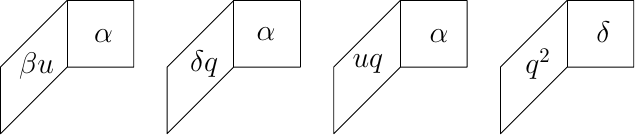} 
\caption{The four tableaux of type $(2,1)$.}
\label{state21}
\end{figure}
\end{example}

\begin{definition}\label{def:fugacity}
We also define 
$\mathbf{Z}_{N,r}(\xi) = \sum_T w(T) \xi^{|T|_2}$, where 
$|T|_2$ is the number of $2$'s in $\type(T)$, and 
the sum is over all rhombic staircase tableaux $T$
of size $(N,r)$.  In other words, $\mathbf{Z}_{N,r}(\xi)$
is the weight generating function for tableaux $T$ of size $(N,r)$, with an extra variable
$\xi$ keeping track of the number of heavy particles in $\type(T)$.

In the case that $r=0$, we write $\mathbf{Z}_N$ and $\mathbf{Z}_N(\xi)$
for $\mathbf{Z}_{N,r}$ and $\mathbf{Z}_{N,r}(\xi)$, respectively.
\end{definition}

\begin{remark}
When $r=0$, rhombic staircase tableaux
recover the staircase tableaux of \cite{CW-Duke1}, and Theorem \ref{thm:main}
specializes to \cite[Theorem 3.5]{CW-Duke1}.
And when $\gamma=\delta=0$, rhombic staircase tableaux 
recover the rhombic alternative tableaux of \cite{M-V},
and Theorem \ref{thm:main} specializes to \cite[Theorem 1.2]{M-V}.
\end{remark}

We have the following closed formula for $\mathbf{Z}_{N,r}$ when $q=1$ (and also $u=1$), 
which follows easily from \cite[Theorem 2.19]{M-V}.
\begin{prop}\label{prop:enumeration}
$$\mathbf{Z}_{N,r}(q=1) ={N \choose r} \prod_{i=r}^{N-1} (\alpha +\beta+\gamma+\delta+i(\alpha+\gamma)(\beta+\delta)).$$ 
\end{prop}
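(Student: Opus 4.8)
The plan is to reduce Proposition~\ref{prop:enumeration} to the enumeration of rhombic alternative tableaux carried out in \cite[Theorem 2.19]{M-V}. Recall that the rhombic alternative tableaux of \cite{M-V} are the special case of rhombic staircase tableaux in which $\gamma=\delta=0$, so they use only the labels $\alpha,\beta,\alpha u,\beta u$; their weight generating function at $q=u=1$ is computed in \cite[Theorem 2.19]{M-V} and equals $\binom{N}{r}\prod_{i=r}^{N-1}(\alpha+\beta+i\alpha\beta)$. The key observation is that at $q=1$ (and $u=1$) the four boundary parameters enter only through the two combinations $\alpha+\gamma$ and $\beta+\delta$, because once $q=u=1$ every empty tile contributes the monomial $1$, and the only role of the labels $\alpha,\gamma$ (resp.\ $\beta,\delta$) is to occupy a square/rhombus and to force the tiles above it in the same vertical strip (resp.\ to its left in the same horizontal strip) to be empty --- and these forcing rules are identical for $\alpha$ and $\gamma$ (resp.\ for $\beta$ and $\delta$). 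Here one must also check the constraint that the lowest square of each vertical strip is nonempty: this constraint does not distinguish among $\alpha,\beta,\gamma,\delta$ either.

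Concretely, first I would set up a weight-preserving map from rhombic staircase tableaux of size $(N,r)$ with $q=u=1$ to rhombic alternative tableaux of the same size with an extra multiplicity: given a rhombic staircase tableau $T$, replace every $\alpha$ or $\gamma$ by $\alpha$, every $\beta$ or $\delta$ by $\beta$, every short rhombus label ($\alpha u$ or $\gamma q$, i.e.\ $\alpha$ or $\gamma$ at $u=q=1$) by $\alpha u$, and every tall rhombus label by $\beta u$. This map is onto the set of rhombic alternative tableaux, and the fiber over a given rhombic alternative tableau $T'$ has size $2^{a}\,?$ --- more precisely, each square or rhombus of $T'$ carrying an "$\alpha$-type" label came from a choice of $\alpha$ vs.\ $\gamma$, and each "$\beta$-type" label from a choice of $\beta$ vs.\ $\delta$. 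Summing the (trivial, all-$1$) $q,u$-weights over the fiber replaces each occurrence of $\alpha$ by $\alpha+\gamma$ and each occurrence of $\beta$ by $\beta+\delta$. Hence
\[
\mathbf{Z}_{N,r}(q=1)=\Bigl[\text{gen.\ fn.\ of rhombic alternative tableaux of size }(N,r)\Bigr]\Big|_{\alpha\mapsto \alpha+\gamma,\ \beta\mapsto\beta+\delta}.
\]
Applying \cite[Theorem 2.19]{M-V}, the bracketed quantity at $u=q=1$ is $\binom{N}{r}\prod_{i=r}^{N-1}(\alpha+\beta+i\alpha\beta)$, and performing the substitution $\alpha\mapsto\alpha+\gamma$, $\beta\mapsto\beta+\delta$ yields exactly
\[
\binom{N}{r}\prod_{i=r}^{N-1}\bigl(\alpha+\beta+\gamma+\delta+i(\alpha+\gamma)(\beta+\delta)\bigr),
\]
as claimed.

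The main obstacle is making the fiber-counting argument airtight: one has to verify that the defining conditions of a rhombic staircase tableau --- the forcing of emptiness above an $\alpha$ or $\gamma$ and to the left of a $\beta$ or $\delta$, and the non-emptiness of the bottom square of each vertical strip --- depend only on the coarser data "$\alpha$-type vs.\ $\beta$-type vs.\ empty" and not on the finer distinction within each type, so that the preimage of a valid rhombic alternative tableau is exactly the set of independent label refinements and every such refinement is again valid. Given the explicit list of conditions in the definitions of rhombic staircase tableau and of $w(T)$, together with the fact that at $u=q=1$ the monomial assigned to every empty tile is $1$, this verification is routine, so the bulk of the proposition is immediate once \cite[Theorem 2.19]{M-V} is invoked.
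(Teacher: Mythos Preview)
Your proposal is correct and follows essentially the same route as the paper: both arguments observe that at $q=u=1$ the letters $\alpha,\gamma$ (resp.\ $\beta,\delta$) play identical roles in the tableau conditions and weights, so $\mathbf{Z}_{N,r}(q=1)$ is obtained from the $\gamma=\delta=0$ case by the substitution $\alpha\mapsto\alpha+\gamma$, $\beta\mapsto\beta+\delta$, and then invoke \cite[Theorem~2.19]{M-V}. The only cosmetic difference is that the paper phrases the relation to rhombic alternative tableaux via an explicit strip-deleting bijection rather than a label-collapsing fiber count, but the content is the same.
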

\begin{proof}
When $\gamma=\delta=0$, our rhombic staircase tableaux are 
in bijection with the rhombic alternative
tableaux of Mandelshtam and Viennot \cite{M-V}.  
To obtain a rhombic alternative tableau from a rhombic staircase
tableau which contains no $\gamma$'s or $\delta$'s, simply
delete every diagonal box containing an $\alpha$ 
(respectively, $\beta$) along with 
its vertical strip (respectively, horizontal strip).
So in the case $\gamma=\delta$,
by \cite[Theorem 2.19]{M-V}, we have that 
\begin{equation} \label{MV}
\mathbf{Z}_{N,r}(q=1, \gamma=\delta=0) = {N \choose r} \prod_{i=r}^{N-1} (\alpha + \beta + i\alpha \beta).
\end{equation}
But now note that in the $q=1$ case, the letters $\alpha$ and $\gamma$ play an identical 
role in rhombic staircase tableaux, as  do the letters $\beta$ and $\delta$.  Therefore
we can compute $\mathbf{Z}_{N,r}(q=1)$ by replacing each $\alpha$  in \eqref{MV} by 
$\alpha+\gamma$ and by replacing each $\beta$ by $\beta+\delta$.  The result follows.
\end{proof}

If we set $\alpha=\beta=\gamma=\delta=1$ into Proposition \ref{prop:enumeration}, we obtain 
the following.

\begin{corollary}
The number of rhombic staircase tableaux equals 
$4^{N-r} \frac{N!}{r!} {N \choose r}$.
\end{corollary}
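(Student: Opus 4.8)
The plan is to obtain the count as an immediate specialization of Proposition~\ref{prop:enumeration}. First I would observe that setting $\alpha=\beta=\gamma=\delta=1$ together with $q=u=1$ makes every rhombic staircase tableau of size $(N,r)$ contribute weight $1$: by the Remark following Definition~\ref{defn:weight} the choice $u=1$ loses no information, every labeled tile then carries the value $1$, and every empty tile is assigned a monomial in $q$ and $u$ which also evaluates to $1$. Hence at this point $\mathbf{Z}_{N,r}=\sum_T w(T)$ is literally the number of rhombic staircase tableaux of size $(N,r)$.

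Next I would plug the specialization into the closed formula. Since $\alpha+\beta+\gamma+\delta=4$ and $(\alpha+\gamma)(\beta+\delta)=2\cdot 2=4$, the $i$-th factor of the product in Proposition~\ref{prop:enumeration} becomes $4+4i=4(i+1)$, so
\begin{align*}
\prod_{i=r}^{N-1}\bigl(\alpha+\beta+\gamma+\delta+i(\alpha+\gamma)(\beta+\delta)\bigr)
&=\prod_{i=r}^{N-1}4(i+1)\\
&=4^{N-r}\prod_{i=r}^{N-1}(i+1)=4^{N-r}\,\frac{N!}{r!},
\end{align*}
where the last equality is the telescoping product $(r+1)(r+2)\cdots N=N!/r!$. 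Multiplying by the factor ${N\choose r}$ from Proposition~\ref{prop:enumeration} yields the asserted count $4^{N-r}\frac{N!}{r!}{N\choose r}$.

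I do not expect any genuine obstacle: the whole argument is a one-line substitution into an already-established identity, and the only point that needs a brief check is that $w(T)$ really does collapse to $1$ under $q=u=\alpha=\beta=\gamma=\delta=1$, which is transparent from the definition of the weight. As a sanity check I might note that the resulting formula reduces to $4^N N!$ when $r=0$, in agreement with the known count of staircase tableaux from \cite{CW-Duke1}.
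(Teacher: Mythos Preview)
Your proposal is correct and is exactly the paper's approach: the corollary is stated as an immediate consequence of setting $\alpha=\beta=\gamma=\delta=1$ in Proposition~\ref{prop:enumeration}, and your computation of the resulting product matches. The only addition you make is the (straightforward) verification that all weights collapse to $1$ under this specialization, which the paper leaves implicit.
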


\section{A tableaux formula for Koornwinder moments}\label{sec:Koornwinder}

In this section we will define Koornwinder moments, as in \cite{CW-Koornwinder},
and use rhombic staircase tableaux
to give an explicit formula for the homogeneous Koornwinder moments.
The proof of our formula will 
be given in Section \ref{sec:momentproof}.

\begin{definition}\label{KoornwinderPolynomials}
Let $\zz=(z_1,\dots, z_m)$, $\lambda = (\lambda_1,\dots, \lambda_m)$
be a partition,
and $a, b, c, d, q, t$ be generic complex parameters.
The \emph{Koornwinder polynomials} $P_{\lambda}(\zz; a, b, c, d|q,t)$ are
multivariate orthogonal polynomials which are the type BC-case of Macdonald polynomials.
More specifically,
$P_{\lambda}(\zz; a, b, c, d|q,t)$ is the unique Laurent polynomial which is 
invariant under permutation and inversion of variables, with leading
monomial $\zz^{\lambda}$, and orthogonal with respect to the
\emph{Koornwinder density}
$$ \prod_{1 \leq i < j \leq m}
\frac{(z_i z_j, z_i/z_j, z_j/z_i, 1/{z_i z_j}; q)_{\infty}}
{(t z_i z_j, tz_i/z_j, tz_j/z_i, t/{z_i z_j};q)_{\infty}}
\prod_{1 \leq i \leq m} 
\frac{(z_i^2, 1/{z_i^2}; q)_{\infty}}
{(az_i, a/z_i, bz_i, b/z_i, cz_i, c/z_i, dz_i, d/z_i;q)_{\infty}}$$
on the unit torus
$|z_1|=\dots = |z_m| = 1$, where the parameters satisfy
$|a|, |b|, |c|, |d|, |q|,|t|<1$.

It is well-known that when $q=t$, we have
$$P_{\lambda}(\zz; a, b, c, d| q, q) = \const \cdot
 \frac{\det(p_{m-j+\lambda_j}(z_i; a, b, c, d| q))_{i,j=1}^m}
{\det(p_{m-j}(z_i; a, b, c, d|q))_{i,j=1}^m},$$
where the $p_i$'s are the Askey-Wilson polynomials.
\end{definition}

For orthogonal polynomials in one variable, the $k$th moment $\mu_k$ is defined
to be the integral of $x^k$ with respect
to the associated density function.  For the multivariate Koornwinder polynomials,
there are several ways that we could define moments.  One way would be to
integrate a monomial 
in $x_1,\dots, x_m$ (here we set
$x_i = \frac{z_i+z_i^{-1}}{2}$) 
with respect to the Koornwinder density.
Following a suggestion of Eric Rains \cite{Rains}, we instead
define Koornwinder moments by integrating
Schur polynomials $s_{\lambda}(x_1,\dots,x_m)$ with respect to the Koornwinder density.

\begin{definition}\cite[Definition 2.6]{CW-Koornwinder}
Let $I_k(f(x_1,\dots,x_m);a,b,c,d;q,q)$ denote the result of integrating the
function $f(x_1,\dots,x_m)$ with respect to the Koornwinder density.
Let $\lambda = (\lambda_1,\dots,\lambda_m)$ be a partition.
We define the \emph{Koornwinder moment}
$$M_{\lambda}
= I_k(s_{\lambda}(x_1,\dots,x_m); a, b, c, d; q,q).$$
\end{definition}

We also define another version of Koornwinder moments as follows.

\begin{definition}\label{def:Koornwindermoment}\cite[Definition 4.2]{CW-Koornwinder}
Set $\mathcal{Z}_N(\xi) = 
\prod_{i=0}^{N-1} (\alpha \beta - q^i \gamma \delta)^{-1} \mathbf{Z}_N (\xi)$.\footnote{We need
to pass from $\mathbf{Z}_N$ to $\mathcal{Z}_N$ because in \cite{CW-Koornwinder} we worked
with the version $\mathcal{Z}_N$ of the partition function.}
Given a partition $\lambda = (\lambda_1,\lambda_2,\dots, \lambda_m)$, we define
the \emph{Koornwinder moment} at $q=t$ to be
\begin{equation}\label{def:moment}
K_{\lambda}(\xi) = \frac{\det(\mathcal{Z}_{\lambda_i+m-i+m-j}(\xi))_{i,j=1}^m}{\det(\mathcal{Z}_{2m-i-j}(\xi))_{i,j=1}^m}.
\end{equation}
\end{definition}

The fact that we refer to both quantities $M_{\lambda}$ and $K_{\lambda}(\xi)$ as {Koornwinder moments} is justified by the
following result.

\begin{proposition}\cite[Proposition 4.3]{CW-Koornwinder}
Let $\lambda=(\lambda_1,\dots,\lambda_m)$ be a partition.
We have that $$M_{\lambda}(a,b,c,d|q) = \left(\frac{1-q}{2i}\right)^{|\lambda|} K_{\lambda}(-1; \alpha, \beta, \gamma, \delta;q),$$
where $|\lambda| = \sum_j \lambda_j$, and
 $\alpha, \beta, \gamma, \delta$ are related to $a,b,c,d$ as follows:
\begin{equation}\label{Subs2}
\alpha=\frac{1-q}{1-ac+ai+ci},~~~~~
\beta=\frac{1-q}{1-bd-bi-di},~~~~~
\gamma=\frac{(1-q)ac}{1-ac+ai+ci},~~~~~
\delta=\frac{(1-q)bd}{1-bd-bi-di},
\end{equation}
where $i$ is the square root of $-1$.
\end{proposition}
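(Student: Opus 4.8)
\emph{Proof plan.} The plan is to reduce the multivariate claim to its one-variable special case. For $m=1$ and $\lambda=(n)$ the statement reads $\mu_n=\left(\frac{1-q}{2i}\right)^n\mathcal{Z}_n(-1)$ (here $i=\sqrt{-1}$, and $\mu_n=I_k(x^n)$ is the Askey-Wilson moment), which is exactly the known proportionality between Askey-Wilson moments and the open ASEP partition function, cf.\ \cite{USW} in the normalization of \cite{CW-Koornwinder}; I would take this as the input. The two structural facts that let one bootstrap from $m=1$ to general $m$ are: (i) at $q=t$ the Koornwinder density is, after the substitution $x_i=\tfrac{z_i+z_i^{-1}}{2}$, the density of a ``squared Vandermonde'' ensemble $\prod_{i<j}(x_i-x_j)^2\prod_i\omega(x_i)$ built from the one-variable Askey-Wilson weight $\omega$; and (ii) the Andr\'eief identity (a.k.a.\ Heine's identity) $\int\det(\phi_j(x_i))_{i,j}\det(\psi_j(x_i))_{i,j}\prod_i\omega(x_i)\,dx_i=m!\,\det(\int\phi_j\psi_k\,\omega)_{j,k}$.

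For (i): putting $t=q$ and using $\frac{(y;q)_\infty}{(qy;q)_\infty}=1-y$, the pairwise factor of the Koornwinder density collapses to $\prod_{i<j}(1-z_iz_j)(1-z_i/z_j)(1-z_j/z_i)(1-1/z_iz_j)$, which in the $x_i$ equals $\prod_{i<j}4(x_i-x_j)^2$; and the single-variable factor is, variable by variable, exactly the Askey-Wilson weight in $x_i$. Hence the normalized $q=t$ Koornwinder probability measure equals $\frac1{N_m}\prod_{i<j}(x_i-x_j)^2\prod_i\omega(x_i)\,dx_i$ for a constant $N_m$ depending only on $m$ and on $a,b,c,d,q$ (absorbing the $4^{\binom{m}{2}}$ and the Jacobians of $z_i\mapsto x_i$); this is also consistent with the determinant formula for $P_\lambda$ at $q=t$ in Definition~\ref{KoornwinderPolynomials}, orthogonality of those ratio-of-alternants being precisely the Andr\'eief identity.

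Granting (i), I would expand $M_\lambda=I_k(s_\lambda(x_1,\dots,x_m))$ using the bialternant formula $s_\lambda(x)=\det(x_i^{\lambda_j+m-j})_{i,j}/\det(x_i^{m-j})_{i,j}$ together with $\det(x_i^{m-j})_{i,j}=\prod_{i<j}(x_i-x_j)$, so that $s_\lambda(x)\prod_{i<j}(x_i-x_j)^2=\det(x_i^{\lambda_j+m-j})_{i,j}\cdot\det(x_i^{m-j})_{i,j}$. Applying Andr\'eief both to this integral and to $N_m$ (the $\lambda=\emptyset$ case), the $m!$ and the constants cancel and
$$M_\lambda=\frac{\det\!\big(\mu_{\lambda_i+2m-i-j}\big)_{i,j=1}^m}{\det\!\big(\mu_{2m-i-j}\big)_{i,j=1}^m}.$$
Substituting the $m=1$ correspondence $\mu_n=\left(\frac{1-q}{2i}\right)^n\mathcal{Z}_n(-1)$ into every entry, and then pulling a suitable power of $\frac{1-q}{2i}$ out of each row and each column of numerator and denominator, the per-column factors cancel between numerator and denominator, the per-row factors combine to $\left(\frac{1-q}{2i}\right)^{|\lambda|}$, and what remains is $\det(\mathcal{Z}_{\lambda_i+2m-i-j}(-1))_{i,j}/\det(\mathcal{Z}_{2m-i-j}(-1))_{i,j}=K_\lambda(-1)$ by Definition~\ref{def:Koornwindermoment}. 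This yields $M_\lambda=\left(\frac{1-q}{2i}\right)^{|\lambda|}K_\lambda(-1)$.

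The step I expect to be the main obstacle is (i): checking carefully that at $q=t$ the Koornwinder density genuinely becomes a squared-Vandermonde ensemble with the one-variable Askey-Wilson weight, and that every constant produced along the way (the $4^{\binom{m}{2}}$, the Jacobians of $z_i\mapsto x_i$, the normalization implicit in $I_k$) depends only on $m$ and on the parameters, so that it disappears upon forming the ratio in $K_\lambda$ — automatic from the ratio structure, but it must be said. With (i) and the $m=1$ correspondence in hand, the remainder is just the Andr\'eief identity and an elementary determinant manipulation.
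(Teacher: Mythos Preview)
The paper does not actually prove this proposition; it is quoted as \cite[Proposition 4.3]{CW-Koornwinder} and used without proof. So there is no ``paper's own proof'' to compare against here.

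That said, your argument is correct and is essentially the canonical one. The key structural observation (your step (i)) that at $q=t$ the pairwise Koornwinder factor collapses to $\prod_{i<j}4(x_i-x_j)^2$ is right: one checks directly that
\[
(1-z_iz_j)(1-z_i/z_j)(1-z_j/z_i)(1-1/z_iz_j)=\frac{(z_i-z_j)^2(z_iz_j-1)^2}{(z_iz_j)^2}=(2x_i-2x_j)^2,
\]
so the $q=t$ Koornwinder measure is indeed an Andr\'eief ensemble built from the one-variable Askey-Wilson weight. Your Andr\'eief/Heine step then gives the Hankel-type ratio $M_\lambda=\det(\mu_{\lambda_i+2m-i-j})/\det(\mu_{2m-i-j})$, and the row/column extraction of powers of $\frac{1-q}{2i}$ is exactly right: factoring $c^{\lambda_i+m-i}$ from row $i$ and $c^{m-j}$ from column $j$ in both numerator and denominator leaves $c^{|\lambda|}$ in front of $K_\lambda(-1)$, as you claim. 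The only input you take on faith is the $m=1$ identity $\mu_n=\bigl(\tfrac{1-q}{2i}\bigr)^n\mathcal{Z}_n(-1)$ under the substitutions \eqref{Subs2}, which is precisely the Uchiyama--Sasamoto--Wadati relation in the normalization used in \cite{CW-Koornwinder}; this is appropriate to cite.

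One small point worth making explicit (you hint at it): the integral $I_k$ is normalized so that $I_k(1)=1$, which is why the constants from the Jacobian $dz_i\mapsto dx_i$ and the factor $4^{\binom{m}{2}}$ genuinely cancel in the ratio and do not need to be tracked.
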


We refer to moments 
of the form $K_{\lambda}$, where $\lambda = 
(k, 0,  \dots, 0)$ as 
\emph{homogeneous Koornwinder moments}.
Note that in this case, 
$M_{\lambda}$ is the result of integrating the 
homogeneous symmetric function $h_k(x_1,\dots,x_m)$
with respect to the Koornwinder density.  
(Here $m$ is the number of parts of $\lambda$, i.e. 
$m-1$ is the number of $0$'s in the partition.)
This is quite a natural analogue of the notion of moment 
for orthogonal polynomials in one variable.

We note also that the homogeneous Koornwinder moments 
 play a role similar to that of  the homogeneous
symmetric functions: \cite[Theorem 5.1]{CW-Koornwinder} gave a Jacobi-Trudi formula
that expresses a general Koornwinder moment as a determinant in the homogeneous ones.
The following result gives a tableaux formula for the 
homogeneous Koornwinder moments.

\begin{theorem}\label{thm:moment}
The homogeneous Koornwinder moment 
$K_{(N-r,0,0,\dots,0)}(\xi)$ (where there are precisely $r$ $0$'s in the partition)
can be computed in terms of rhombic staircase tableaux via 
$$K_{(N-r,0,0,\dots,0)}(\xi) = \frac{1}{(1-q)^r \prod_{i=0}^{N-r-1} (\alpha \beta - q^{i+2r} \gamma \delta)}
\mathbf{Z}_{N,r}(\xi),$$
where $\mathbf{Z}_{N,r}(\xi)$ is the weight generating function for 
rhombic staircase tableaux given in Definition \ref{def:fugacity}.
\end{theorem}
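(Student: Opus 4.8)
The plan is to obtain Theorem \ref{thm:moment} by feeding the combinatorial description of the two-species ASEP from Theorem \ref{thm:main} into the moment--partition-function correspondence established in \cite{CW-Koornwinder}. In \cite{CW-Koornwinder} it is proved that the homogeneous Koornwinder moment $K_{(N-r,0,\dots,0)}(\xi)$ --- which, with $m=r+1$, is by Definition \ref{def:Koornwindermoment} the determinant ratio $\det(\mathcal{Z}_{\lambda_i+m-i+m-j}(\xi))_{i,j=1}^m / \det(\mathcal{Z}_{2m-i-j}(\xi))_{i,j=1}^m$ --- is proportional to the fugacity partition function of the two-species ASEP on $N$ sites with $r$ light particles, the constant of proportionality being $\bigl((1-q)^r\prod_{i=0}^{N-r-1}(\alpha\beta-q^{i+2r}\gamma\delta)\bigr)^{-1}$. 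Here the $\mathcal{Z}_j(\xi)$ are the $\mathcal{Z}$-normalized one-species ASEP fugacity partition functions, related to the $\mathbf{Z}_j(\xi)$ by $\mathcal{Z}_N(\xi)=\prod_{i=0}^{N-1}(\alpha\beta-q^i\gamma\delta)^{-1}\mathbf{Z}_N(\xi)$. So, modulo normalization, the content of Theorem \ref{thm:moment} is precisely that the tableaux generating function $\mathbf{Z}_{N,r}(\xi)=\sum_T w(T)\xi^{|T|_2}$ equals the steady-state normalization of the two-species ASEP.

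This last identification is exactly what Theorem \ref{thm:main} delivers: summing the steady-state formula of Theorem \ref{thm:main} over all states $\tau$ with $|\tau|_1=r$, each weighted by $\xi$ to the number of $2$'s in $\tau$, gives $\sum_\tau \xi^{\#\{i:\tau_i=2\}}(\text{steady-state weight of }\tau)=\mathbf{Z}_{N,r}(\xi)$ up to the common normalizing constant. To turn ``up to a constant'' into an honest equality I would use the Matrix Ansatz of Section \ref{sec:MA} (Theorem \ref{thm:MA}): the Matrix-Ansatz partition function $\langle W|(\,\cdot\,)|V\rangle$ computes the unnormalized steady-state weights, and Theorem \ref{thm:main} together with Section \ref{sec:matrices} shows this matrix product equals $\mathbf{Z}_{N,r}(\xi)$ on the nose for the normalization of $D$, $E$, the light-particle matrix, and the boundary vectors fixed in Section \ref{sec:MA}. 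The one genuinely delicate point is then to check that the Matrix-Ansatz normalization used in \cite{CW-Koornwinder} coincides with the one used here --- as the footnote to Definition \ref{def:Koornwindermoment} hints, the two papers work with the $\mathcal{Z}$- versus $\mathbf{Z}$-conventions respectively --- and to track any resulting conversion factor so that all the constants collapse to exactly $(1-q)^{-r}\prod_{i=0}^{N-r-1}(\alpha\beta-q^{i+2r}\gamma\delta)^{-1}$. A useful sanity check is the case $r=0$: there $m=1$, both determinants are $1\times 1$, $K_{(N,0^0)}(\xi)=\mathcal{Z}_N(\xi)$, and the asserted formula reduces to the definition $\mathcal{Z}_N(\xi)=\prod_{i=0}^{N-1}(\alpha\beta-q^i\gamma\delta)^{-1}\mathbf{Z}_N(\xi)$, consistent with $\mathbf{Z}_{N,0}(\xi)=\mathbf{Z}_N(\xi)$.

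I expect the main obstacle to be purely this bookkeeping of normalizations across the two papers and the two Matrix Ans\"atze --- including the origin of the shift $i\mapsto i+2r$ and the drop from $N$ to $N-r$ factors in the product, which reflects $r$ rows and columns of the determinants cancelling between numerator and denominator (the rows of the numerator indexed by $i\ge 2$ agree verbatim with those of the denominator, since $\lambda_i=0$ there). The conceptual steps --- ``two-species ASEP $\leftrightarrow$ rhombic staircase tableaux'' via Theorem \ref{thm:main}, and ``two-species ASEP $\leftrightarrow$ homogeneous Koornwinder moments'' via \cite{CW-Koornwinder} --- are already in hand. Should it turn out that \cite{CW-Koornwinder} states its correspondence only up to an unspecified constant rather than with the explicit scalar above, the fallback is to recompute that scalar directly: expand the determinant defining $K_{(N-r,0^r)}(\xi)$ along its first row, substitute the Matrix-Ansatz value of each $\mathcal{Z}_j(\xi)$, and collapse the alternating sum using the commutation relations of the light-particle matrix together with a Lindstr\"om--Gessel--Viennot-type cancellation, whereupon each of the $r$ light particles visibly contributes one factor of $(1-q)$.
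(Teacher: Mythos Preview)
Your overall plan is exactly the paper's: combine the result of \cite{CW-Koornwinder} (stated here as Theorem~\ref{thm:CW-K}), which gives $K_{(N-r,0^r)}(\xi) = (1-q)^{-r}\,\mathcal{Z}_{N,r}(\xi)$ for the Uchiyama-normalized partition function $\mathcal{Z}_{N,r}$, with the tableaux interpretation of $\mathbf{Z}_{N,r}$ from Corollary~\ref{cor:ZNr}. The only remaining content is the conversion $\mathcal{Z}_{N,r}(\xi) = \mathbf{Z}_{N,r}(\xi)\prod_{i=0}^{N-r-1}(\alpha\beta-q^{i+2r}\gamma\delta)^{-1}$, which is Lemma~\ref{lem:2partition}. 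Note that \cite{CW-Koornwinder} supplies only the factor $(1-q)^{-r}$; the product is new to this paper, so your first option (hoping the full constant is already in \cite{CW-Koornwinder}) does not work.

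The paper's argument for Lemma~\ref{lem:2partition} is much simpler than your determinant-expansion fallback. The Uchiyama solution ($\lambda_n\equiv 1$) and the combinatorial solution ($\lambda_n=\alpha\beta-q^{n-1}\gamma\delta$) to Theorem~\ref{thm:MA} both compute unnormalized steady-state probabilities of the same Markov chain, so for every $A$-compatible word $X$ with $|X|=N$, $|X|_A=r$ one has $\langle W|X|V\rangle = c_{N,r}\langle\mathcal{W}|\mathcal{X}|\mathcal{V}\rangle$ for a constant depending only on $(N,r)$. Apply relation~(III) to each side: on the Uchiyama side it gives $\langle\mathcal{W}|\mathcal{X}|\mathcal{V}\rangle$, on the combinatorial side it gives $(\alpha\beta-q^{N+r-1}\gamma\delta)\langle W|X|V\rangle$ (here $\|X\|+1=N+r$ for a word of length $N-1$ with $r$ $A$'s). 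This yields the one-step recursion $c_{N,r}=(\alpha\beta-q^{N+r-1}\gamma\delta)\,c_{N-1,r}$, with trivial base case at $N=r$. The product, the shift $i\mapsto i+2r$, and the count $N-r$ of factors all fall out immediately, with no determinants or Lindstr\"om--Gessel--Viennot needed.
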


\section{A Matrix Ansatz for the two-species ASEP}\label{sec:MA}

In this section we will prove Theorem \ref{thm:MA}, which shows that if 
one has matrices and vectors satisfying certain relations, then one can 
use appropriate matrix products to compute steady state probabilities of the 
two-species ASEP.  
Our Matrix Ansatz is a generalization of the one given 
by Uchiyama \cite{Uchiyama}.
We will be working with a distinguished matrix $A$
together with two families of matrices  $\{D^{(i)}\}$ and $\{E^{(i)}\}$ where $i$
ranges over the non-negative integers $\N$.  Note that in this section, we have 
set $u=1$; setting one parameter equal to $1$ does not lose any information.

Before stating the theorem, we need to introduce some terminology.

\begin{definition}\label{def:compatible}
A word $X=X_1\dots X_m$ in $\{A, D^{(i)}, E^{(j}) \ \vert \ i, j \in \N\}$  is 
\emph{$A$-compatible} if $X_k = D^{(i)}$ or $X_k=E^{(i)}$ if and only if there are 
precisely $i$ $A$'s in $X_1 X_2 \dots, X_{k-1}$.

More generally, we say that 
a word $X=X_1\dots X_m$ in $\{A, D^{(i)}, E^{(j)} \ \vert \ i, j \in \N\}$  is 
\emph{$(A,t)$-compatible} if $X_k = D^{(t+i)}$ or $X_k=E^{(t+i)}$ if and only if there are 
precisely $i$ $A$'s in $X_1 X_2 \dots, X_{k-1}$.
\end{definition}

For example, the word $X=D^{(0)} A E^{(1)} A A D^{(3)}$ is $A$-compatible,
and  $X=D^{(3)} A E^{(4)} A A D^{(6)}$ is $(A,3)$-compatible.

\begin{definition}
Let $|X|$ denote the number of letters in $X$, and let $|X|_A$ denote the number of $A$'s in the word $X$.
Set $\| X \| = |X| + |X|_A$.
\end{definition}

\begin{definition}\label{def:Xtau}
Given a word $\tau=(\tau_1,\dots,\tau_N)$ in the letters $\{0,1,2\}$, there is a unique
$A$-compatible word $X(\tau)$ with $|X(\tau)| = N$ which is obtained 
by replacing
each $\tau_i=0$ by an $E^{(j)}$, each $\tau_i=1$ by an $A$, and each $\tau_i=2$ 
by a $D^{(j)}$.
Conversely, given an $A$-compatible word $X$, there is a unique 
sequence $\tau=\tau(X) = (\tau_1,\dots,\tau_N)$ in the letters $\{0,1,2\}$ such that 
$X = X(\tau)$.  
\end{definition}

For example, if $\tau = (0,2,1,1,2,1,0)$, then 
$X(\tau) = E^{(0)} D^{(0)} A A D^{(2)} A E^{(3)}$.
Conversely if 
$X= E^{(0)} D^{(0)} A A D^{(2)} A E^{(3)}$ then 
$\tau(X) = (0,2,1,1,2,1,0)$.

\begin{theorem}\label{thm:MA}
Let $\langle W|$ and $|V\rangle$ be vectors and let 
 $A$, $\{D^{(i)} \ \vert \ i \in \N\}$, and $\{E^{(i)} \ \vert \ i\in \N\}$ be matrices.
Let $\{\lambda_n\}$ be a family of constants indexed by 
the non-negative integers.
Suppose that for any $A$-compatible word $X$ with $|X|_A=t$,  any 
$(A,t)$-compatible word $Y$, and any 
$(A,t+1)$-compatible word $Y'$, the following relations hold:
\begin{enumerate}
\item[(I)] $\langle W|X(D^{(t)} E^{(t)}-q E^{(t)} D^{(t)})Y|V \rangle = 
\lambda_{\| XY \|+2}\langle W| X(D^{(t)}+E^{(t)})Y|V \rangle$
\item[(II)] $\beta \langle W|XD^{(t)}|V \rangle - \delta\langle W|X E^{(t)}|V \rangle= 
\lambda_{\| X \|+1}\langle W| X |V \rangle$
\item[(III)] $\alpha \langle W|E^{(0)} X|V \rangle - \gamma \langle W| D^{(0)} X|V \rangle = 
\lambda_{\| X \|+1}\langle W|X |V \rangle$
\item[(IV)] $\langle W|X(D^{(t)} A-q A D^{(t+1)})Y'|V \rangle = \lambda_{\| XY' \|+3}\langle W| X A Y'|V \rangle$
\item[(V)] $\langle W|X(A E^{(t+1)}-q E^{(t)} A)Y'|V \rangle = \lambda_{\| XY' \|+3}\langle W| X A Y'|V \rangle$
\end{enumerate}
Let $Z_{N,r} = \sum_X \langle W |X | V \rangle$, where the sum is over all 
$A$-compatible words $X$ such that $|X|=N$ and $|X|_A = r$.  
Then in the two-species ASEP on a lattice of $N$ sites with precisely $r$
light particles, the steady state probability of state $\tau=(\tau_1,\dots, \tau_N)$ is equal to
$$\frac{\langle W|X(\tau)|V\rangle}
{Z_{N,r}}.$$
\end{theorem}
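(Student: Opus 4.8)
## Proof strategy for Theorem~\ref{thm:MA}

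The plan is to verify that the quantities $p_\tau := \langle W|X(\tau)|V\rangle / Z_{N,r}$, ranging over states $\tau \in B_{N,r}$, form the stationary distribution of the two-species ASEP. Since the stationary distribution is the unique (up to scaling) vector annihilated by the generator of the Markov chain, it suffices to show that the unnormalized quantities $f(\tau) := \langle W|X(\tau)|V\rangle$ satisfy the master equation
\[
\sum_{\sigma \ne \tau} \big( P_{\sigma,\tau} f(\sigma) - P_{\tau,\sigma} f(\tau) \big) = 0
\qquad \text{for every } \tau \in B_{N,r}.
\]
Multiplying through by $N+1$ to clear denominators, this becomes a linear identity among the $f$-values of $\tau$ and its neighbors in the state diagram: the bulk hops (the three types of adjacent-swap moves with rates $u$ and $q$), the left/right boundary moves (rates $\alpha,\gamma$ at site $1$ and $\beta,\delta$ at site $N$), and the diagonal loop term.

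Following the standard \emph{Matrix Ansatz} telescoping argument (as in Derrida--Evans--Hakim--Pasquier and its two-species extensions by Uchiyama and others), I would introduce for each $A$-compatible word $X(\tau) = X_1 \cdots X_N$ the ``hat'' matrices $\widehat{D}^{(t)}, \widehat{E}^{(t)}$ defined so that relations (I)--(V) each take the homogeneous form: an interior commutator $X_k X_{k+1} \mapsto \lambda_\bullet \cdot (\text{something}) $, a right-boundary contraction $\beta D - \delta E$, a left-boundary contraction $\alpha E - \gamma D$, and the two mixed $A$-relations (IV)--(V) handling swaps involving a light particle. Concretely, relations (I), (IV), (V) say that replacing an adjacent pair $\tau_k\tau_{k+1} \in \{21, 20, 10\}$ by its swap produces, after accounting for the rate-$u$ forward move and rate-$q$ backward move, a telescoping difference whose ``defect'' at position $k$ equals $\lambda_{\|\cdots\|}$ times $\langle W|\cdots(D^{(t)}+E^{(t)})\cdots|V\rangle$ (or $\cdots A\cdots$). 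Summing the master equation's bulk contributions over all interior positions $k$, these local defects telescope: the defect created at position $k$ by one hop is cancelled by the defect at position $k$ coming from the neighboring configuration, leaving only boundary residue at $k=1$ and $k=N$. Relations (II) and (III) are then exactly what is needed to cancel that boundary residue against the entrance/exit terms with rates $\alpha,\beta,\gamma,\delta$, and the constants $\lambda_n$ — which appear uniformly on the right-hand side of every relation — drop out of the final identity because each $\lambda_n$-term appears once with a $+$ and once with a $-$ sign. The bookkeeping is cleanest if one first checks the case $N=1$ (states $\tau = (0), (1), (2)$) directly from (II) and (III), and then argues by the telescoping sum for general $N$.

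The main obstacle is the correct handling of the \emph{graded} nature of the matrices: unlike the classical single-species Ansatz, here $D$ and $E$ carry a superscript recording how many $A$'s precede them, and the $A$-relations (IV) and (V) \emph{shift} that grading by $1$ when an $A$ is transposed past a $D$ or $E$. One must check that when a light particle hops right (the move $A\,0 \to 0\,A$, i.e. $\cdots A E^{(t+1)} \cdots \to \cdots E^{(t)} A \cdots$, contributing rate $u$) versus left (rate $q$), the superscripts on all the $D$'s and $E$'s to the right of the swapped pair are consistent on both sides of the equation — this is precisely why the definition of $(A,t)$-compatible and $(A,t+1)$-compatible words is set up as it is, and why $Y$ (resp.\ $Y'$) in (IV)--(V) must be $(A,t+1)$-compatible. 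Verifying that the telescoping of bulk defects respects these shifts, so that the defect from a $21$-, $20$-, or $10$-swap at position $k$ genuinely cancels against the matching term from the neighbor, is the delicate point; once the grading bookkeeping is pinned down, the remainder is the routine sign-tracking of the telescoping sum and the $N=1$ base case.
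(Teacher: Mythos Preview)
Your proposal is correct and follows essentially the same route as the paper's proof: verify directly that $f(\tau)=\langle W|X(\tau)|V\rangle$ satisfies the stationary master equation by grouping the bulk and boundary terms into pairs, applying relations (I)--(V) to reduce each pair to $\pm\lambda_{N+r}$ times an $f_{N-1}$-term, and observing that these $f_{N-1}$-terms cancel telescopically across consecutive positions. The paper does this in one shot for general $N$ (with no separate $N=1$ base case and no auxiliary ``hat'' matrices), so your detour through $\widehat D,\widehat E$ and a small-$N$ check is unnecessary, but the core mechanism you describe---the grading bookkeeping and the telescoping cancellation of the $\lambda_{N+r}$ contributions---is exactly what the paper carries out.
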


For example, the steady state probability of state
$(2,0, 1,1, 2,2)$ is equal to
$\frac{\langle W| D^{(0)} E^{(0)} AAD^{(2)} D^{(2)}|V\rangle}{Z_{6,2}}.$

\begin{proof}
Let $f_N(\tau_1,\dots,\tau_N)$ denote the (unnormalized) steady state probability of 
state $\tau = (\tau_1,\dots,\tau_N)$ of the two-species ASEP,
where $\tau \in \{0,1,2\}^N$  and $\tau$ contains precisely $r$ $1$'s. 
We are in the steady state  if the net rate of entering
each state $\tau$ is $0$, or in other words, the following expression equals $0$:
\begin{align}
&\indicator(\tau_1=0)[-\alpha f_N(0,\tau_2,\dots, \tau_N) + \gamma f_N(2,\tau_2,\dots,\tau_N)] \label{eq:1}\\
+&\indicator(\tau_1=2)[\alpha f_N(0,\tau_2,\dots, \tau_N) - \gamma f_N(2,\tau_2,\dots,\tau_N)] \label{eq:2}\\
+&\sum_{i=1}^{N-1} \indicator(\tau_i=2 \text{ and } \tau_{i+1}=0) [-f_N(\tau_1,\dots,2,0,\dots,\tau_N) + 
                         qf_N(\tau_1,\dots,0,2,\dots,\tau_N)] \label{eq:3}\\
+&\sum_{i=1}^{N-1} \indicator(\tau_i=0 \text{ and } \tau_{i+1}=2) [f_N(\tau_1,\dots,2,0,\dots,\tau_N) - 
                         qf_N(\tau_1,\dots,0,2,\dots,\tau_N)] \label{eq:4}\\
+&\sum_{i=1}^{N-1} \indicator(\tau_i=1 \text{ and } \tau_{i+1}=0) [-f_N(\tau_1,\dots,1,0,\dots,\tau_N) + 
                         qf_N(\tau_1,\dots,0,1,\dots,\tau_N)] \label{eq:5}\\
+&\sum_{i=1}^{N-1} \indicator(\tau_i=0 \text{ and } \tau_{i+1}=1) [f_N(\tau_1,\dots,1,0,\dots,\tau_N) -
                         qf_N(\tau_1,\dots,0,1,\dots,\tau_N)] \label{eq:6}\\
+&\sum_{i=1}^{N-1} \indicator(\tau_i=2 \text{ and } \tau_{i+1}=1) [-f_N(\tau_1,\dots,2,1,\dots,\tau_N) + 
                         qf_N(\tau_1,\dots,1,2,\dots,\tau_N)] \label{eq:7}\\
+&\sum_{i=1}^{N-1} \indicator(\tau_i=1 \text{ and } \tau_{i+1}=2) [f_N(\tau_1,\dots,2,1,\dots,\tau_N) -
                         qf_N(\tau_1,\dots,1,2,\dots,\tau_N)] \label{eq:8}\\
+& \indicator(\tau_N=0) [\beta f_N(\tau_1,\dots, \tau_{N-1},2)-\delta f_N(\tau_1,\dots,\tau_{N-1},0)] \label{eq:9}\\
+& \indicator(\tau_N=2) [- \beta f_N(\tau_1,\dots, \tau_{N-1},2)+\delta f_N(\tau_1,\dots,\tau_{N-1},0)] \label{eq:10}.
\end{align}

Above, we use $\indicator$ to denote the boolean function taking value $1$ or $0$ based on whether its argument
is true or false.  And in \eqref{eq:3} through \eqref{eq:8} above, the arguments
$2,0$ and $0,2$; $1,0$ and $0,1$; and $2,1$ and $1,2$,  are in positions $i$ and $i+1$. 

We need to prove that if we substitute the quantity $\langle W| X(\tau_1,\dots,\tau_n)|V \rangle$
for $f_N(\tau_1,\dots,\tau_N)$, the sum of the expressions in \eqref{eq:1} through \eqref{eq:10} is $0$.
After making this substitution, the assumptions of the theorem imply the following:

\begin{itemize}[leftmargin=.4cm]
\item The sum of \eqref{eq:1} and \eqref{eq:2} is only nonzero if $\tau_1=0$ or $2$, and in that case, 
is equal to $\mp \lambda_{N+r} f_{N-1}(\tau_2,\dots,\tau_N)$, where the sign is chosen based on whether $\tau_1$ is $0$ or $1$.
\item The sum of the $i$th terms of \eqref{eq:3} and \eqref{eq:4} is only nonzero if $\{\tau_i, \tau_{i+1}\} = 
\{0,2\}$, and in that case, equals
$\mp \lambda_{N+r} [ f_{N-1}(\tau_1,\dots,\tau_{i-1},\tau_{i+1},\dots,\tau_N) +
                     f_{N-1}(\tau_1,\dots,\tau_i, \tau_{i+2}, \dots, \tau_N)]$,
based on whether $\tau_i$ equals $2$ or $0$.
\item The sum of the $i$th terms of \eqref{eq:5} and \eqref{eq:6} is only nonzero if $\{\tau_i, \tau_{i+1}\} = 
\{0,1\}$, and in that case, is equal to 
$\mp \lambda_{N+r} [ f_{N-1}(\tau_1,\dots,\tau_{i-1},1,\tau_{i+2},\dots,\tau_N)]$,
based on whether $\tau_i$ equals $1$ or $0$.
\item The sum of the $i$th terms of \eqref{eq:7} and \eqref{eq:8} is only nonzero if $\{\tau_i, \tau_{i+1}\} = 
\{1,2\}$, and in that case, is equal to 
$\mp \lambda_{N+r} [ f_{N-1}(\tau_1,\dots,\tau_{i-1},1,\tau_{i+2},\dots,\tau_N)]$,
based on whether $\tau_i$ equals $2$ or $1$.
\item The sum of \eqref{eq:9} and \eqref{eq:10} is only nonzero if $\tau_N=0$ or $2$, and in that case, 
is equal to $\pm \lambda_{N+r} f_{N-1}(\tau_1,\dots,\tau_{N-1})$ based on whether $\tau_N$ is $0$ or $2$.
\end{itemize}

Then using these conditions, it is easy to verify that the sum of 
\eqref{eq:1} through \eqref{eq:10} is $0$, since all the terms involving $f_{N-1}$ cancel out.
For example,  if $(\tau_1,\dots,\tau_N) = (0,2,1,0,2)$ (so that $N=5$ and $r=1$), we get that the sum
of \eqref{eq:1} through \eqref{eq:10} equals:
\begin{eqnarray*}
&- \lambda_6 f_4(2,1,0,2) \hspace{1cm}  &\text{ (from \eqref{eq:1} and \eqref{eq:2})} \\
&+  \lambda_6 [ f_4(2,1,0,2)+f_4(0,1,0,2)] \hspace{1cm} &\text{ (from the $i=1$ terms of \eqref{eq:3} and \eqref{eq:4}) }\\
&-  \lambda_6  f_4(0,1,0,2) \hspace{1cm} &\text{ (from the $i=2$ terms of \eqref{eq:7} and \eqref{eq:8}) }\\
&-  \lambda_6  f_4(0,2,1,2) \hspace{1cm} &\text{ (from the $i=3$ terms of \eqref{eq:5} and \eqref{eq:6}) }\\
&+  \lambda_6  [f_4(0,2,1,2)+f_4(0,2,1,0)] \hspace{1cm} &\text{ (from the $i=4$ terms of \eqref{eq:3} and \eqref{eq:4}) }\\
&-  \lambda_6  f_4(0,2,1,0) \hspace{1cm} &\text{ (from \eqref{eq:9} and \eqref{eq:10}) }
\end{eqnarray*}
Clearly the sum of these terms is $0$.
\end{proof}

\section{The matrices describing rhombic staircase tableaux}\label{sec:matrices}

In this section we will define a row vector $\langle W|$, a column vector $|V \rangle$, a distinguished matrix $A$,
and two families of matrices $\{D^{(t)} \ \vert \ t\in \N\}$ 
and $\{E^{(t)} \ \vert \ t\in \N\}$.  We will then explain how appropriate matrix products compute generating functions for 
rhombic staircase tableaux.

\subsection{The definition of the matrices}

\begin{definition}\label{def:matrices}
In particular, our matrices and 
vectors are not finite.  We define ``row'' and ``column'' vectors 
$\langle W| = (W_{ik})$ and $|V\rangle = (V_{j\ell})$,
where the indices $i, j, k, \ell$ 
range over the non-negative integers $\N$. For each $t\in \N$,
we also define  matrices 
$D^{(t)} = (D^{(t)}_{i,j,k,\ell})$ and $E^{(t)} = (E^{(t)}_{i,j,k,\ell})$,
as well as the matrix $A = (A_{i,j,k,\ell})$ 
(where again the indices $i,j,k,\ell$ range over $\N$)
by the following:
\begin{eqnarray*}
W_{ik}&=&\left\{
\begin{array}{ll}
1 &\textup{if $i=k=0$,}\\
0  &\textup{otherwise,}
\end{array}  \right.\\
V_{j\ell}&=&1 \text{ always.}
\end{eqnarray*}
\[
D^{(t)}_{i,j,k,\ell} = \begin{cases}
0 & \mbox{if } j<i \mbox{ or } \ell>k+1 \\
\alpha q^i &\mbox{if } k=0, \ell=1, \mbox{ and } i=j\\
\delta q^i(q^t+(\alpha+\gamma q^t)[t]_q) &\mbox{if } k=0,\ell=0, \mbox{ and } j=i+1\\
\delta(D^{(t)}+E^{(t)})_{i,j-1,k-1,\ell}+D^{(t)}_{i,j,k-1,\ell-1} & \mbox{otherwise.}
\end{cases}
\]
\[
E^{(t)}_{i,j,k,\ell} = \begin{cases}
0 & \mbox{if } j<i \mbox{ or } \ell>k+1 \\
\gamma q^{2t+i} &\mbox{if } k=0, \ell=1, \mbox{ and } i=j\\
\beta q^i(q^t+(\alpha+\gamma q^t)[t]_q) &\mbox{if } k=0,\ell=0, \mbox{ and } i=j\\
\beta(D^{(t)}+E^{(t)})_{i,j,k-1,\ell}+q E^{(t)}_{i,j,k-1,\ell-1} & \mbox{otherwise.}
\end{cases}
\]
\[
A_{i,j,k,\ell} = \begin{cases}
0 & \mbox{if } \ell>k \mbox{ or }j-i>k-\ell\\
q^{2i} & \mbox{if } i=j \mbox{ and }k=\ell=0\\
\beta A_{i,j,k-1,\ell}+\delta q A_{i,j-1,k-1,\ell} + q A_{i,j,k-1,\ell-1} & \mbox{otherwise.}
\end{cases}
\]
In the formulas above we use $[t]_q$ to denote the \emph{$q$-analogue} 
$1+q+\dots +q^{t-1} = \frac{q^t-1}{q-1}$.  And 
by convention, if any subscript $i,j,k$ or $\ell$ is negative, then 
the matrix entry $M_{ijk\ell} = 0$.
\end{definition}

When we write $M_{i,j,k,\ell}$, we are thinking of the two coordinates $i$ and $k$ as specifying a ``row'' of a matrix,
and the two coordinates $j$ and $\ell$ as specifying a ``column'' of a matrix.  Therefore
matrix multiplication is defined by 
$$(MM')_{i,j,k,\ell} = \sum_{a,b} M_{i,a,k,b} M'_{a,j,b,\ell},$$
where $a$ and $b$ vary over $\N$.
Note that
when $M$ and $M'$ are products of $D^{(t)}$'s, $E^{(t)}$'s, and $A$'s, the sum on the right-hand-side
is  finite.
Specifically, if $M$ is a product of $s$ factors as above, 
then $M_{i,a,k,b}$ is $0$ unless $a+b \leq i + k+s$.  This can
be shown by induction from the definition of the matrices,
or by using the combinatorial interpretation
given in Lemma \ref{comb-lemma}.

Although we don't need it in this paper, we note that one can use the recursive definition of $A$ to prove the following explicit formula for 
$A_{i,j,k,\ell}$.
\begin{rem}
\[
A_{i,j,k,\ell} = \delta^{j-i}\beta^{k-\ell-(j-i)}q^{\ell+i+j} {k \choose \ell}{k-\ell \choose j-i}
\]
\end{rem}

\subsection{The combinatorial interpretation of our matrices in terms
of tableaux}\label{sec:comb-interpret}

We say that a horizontal strip of a rhombic staircase tableau $T$
is a horizontal {\it $\beta$-strip} if the leftmost tile in that strip 
containing a Greek letter contains a $\beta$ (or $\beta u$).
Note that every tile to the left of that $\beta$
must contain a $u$ or $u^2$.  Similarly we will talk about horizontal 
\emph{$\delta$-strips};
in this case, every tile to the left of that
$\delta$ must contain a $q$ or $q^2$.  We will also talk about horizontal 
\emph{$\alpha/\gamma$}-strips,
which is shorthand
for horizontal strips which are $\alpha$-strips {\it or} $\gamma$-strips.


Recall the notions of $A$-compatible and $(A,t)$-compatible from Definition \ref{def:compatible}.

\begin{theorem}\label{comb-interpret}
Suppose that $X$ is an $(A,t)$-compatible word 
in $\{A, D^{(i)}, E^{(j)} \ \vert \ i,j\in \N\}$.
Set $u=1$.
Then we have that:
\begin{enumerate}

\item $X_{ijk\ell}$ is the (weight) generating function for
all ways of adding $|X|$ new columns of type $\tau(X)$  to a rhombic staircase
tableau with precisely $t$ $1$-columns, 
$i$ horizontal $\delta$-strips, and $k$ horizontal 
$\alpha/\gamma$-strips,
so as to obtain a new tableau with $j$ horizontal $\delta$-strips 
$\ell$ horizontal $\alpha/\gamma$-strips;

\item if $t=0$, then $(WX)_{j\ell}$ is the generating function for
rhombic staircase tableaux of type $\tau(X)$ which have $j$ horizontal 
$\delta$-strips
and $\ell$ horizontal $\alpha/\gamma$-strips;

\item if $t=0$, then $\langle W|X|V\rangle$ is the  generating function for all rhombic staircase
tableaux of type $\tau(X)$.

\end{enumerate}
\end{theorem}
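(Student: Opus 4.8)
The plan is to establish part (1) first, by induction on the number of letters $|X|$, and then obtain parts (2) and (3) as immediate corollaries. Granting (1), part (2) follows by computing $(WX)_{j\ell}=\sum_{i,k}W_{ik}X_{ijk\ell}=X_{0,j,0,\ell}$, since $W_{ik}=1$ exactly when $i=k=0$. A tableau with no $1$-columns and no horizontal $\delta$- or $\alpha/\gamma$-strips is necessarily empty: a nonempty tableau contains a column, and a $02$-column carries a corner and hence a horizontal strip. So when $t=0$, the entry $X_{0,j,0,\ell}$ counts exactly the rhombic staircase tableaux of type $\tau(X)$ with $j$ horizontal $\delta$-strips and $\ell$ horizontal $\alpha/\gamma$-strips, which is (2). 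Then (3) follows from $\langle W|X|V\rangle=\sum_{j,\ell}(WX)_{j\ell}V_{j\ell}=\sum_{j,\ell}(WX)_{j\ell}$, using $V_{j\ell}=1$ and the fact that $WX$ has finite support; summing (2) over all $(j,\ell)$ collects each tableau of type $\tau(X)$ exactly once.

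For part (1), the heart of the matter is a multiplicativity statement that turns concatenation of words into matrix multiplication. If $X$ is $(A,t)$-compatible with $|X|_A=s$ and $Y$ is $(A,t+s)$-compatible, then $XY$ is $(A,t)$-compatible, and building a tableau by first adjoining the columns prescribed by $\tau(X)$ and then those prescribed by $\tau(Y)$ gives the same thing as adjoining the columns of $\tau(XY)$. The key point is \emph{locality}: by Definition \ref{defn:weight} the monomial assigned to any tile depends only on the nearest labelled tile to its right (in its horizontal strip) and below it (in its vertical strip), and when columns are adjoined one at a time in the order dictated by the word, these reference tiles always lie in columns already placed; hence the weight of the enlarged tableau factors as the product of the two enlargement-weights. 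Summing over the intermediate number of $\delta$- and $\alpha/\gamma$-strips is precisely the matrix product $(XY)_{ijk\ell}=\sum_{a,b}X_{iakb}Y_{ajb\ell}$, and the number of $1$-columns after the first batch is exactly $t+s$, as required by the superscripts in Definition \ref{def:matrices}. Peeling off the last letter, the inductive step thus reduces (1) to the three single-letter words $X=D^{(t)}$, $X=E^{(t)}$, $X=A$.

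The single-letter cases are then verified by a secondary induction on the height of the column being adjoined --- equivalently, on the matrix index decreased in the recursive definitions of Definition \ref{def:matrices} --- by reading the new column from the bottom up. The bottom tile is the corner box, which for $D^{(t)}$ holds an $\alpha$ (opening a new $\alpha/\gamma$-strip) or a $\delta$ (opening a new $\delta$-strip), for $E^{(t)}$ holds a $\beta$ or a $\gamma$, and for $A$ is absent since a $1$-column is a pure stack of tall rhombi. Each higher tile is a square crossing exactly one existing horizontal strip, with its monomial read off from Definition \ref{defn:weight} according to whether the strip is a $\beta$-, $\delta$-, or $\alpha/\gamma$-strip and --- for the $\alpha/\gamma$ case --- according to the label sitting below it in the column; finally the short rhombi at the top cross the $t$ existing $1$-columns. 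One checks that the three branches of each recursion (for instance the splitting $\delta(D^{(t)}+E^{(t)})_{i,j-1,k-1,\ell}+D^{(t)}_{i,j,k-1,\ell-1}$) encode the choice of which kind of strip the next tile up crosses, and that the explicit boundary entries ($\alpha q^i$; $\delta q^i(q^t+(\alpha+\gamma q^t)[t]_q)$; $\gamma q^{2t+i}$; $\beta q^i(q^t+(\alpha+\gamma q^t)[t]_q)$; $q^{2i}$; and the vanishing conditions) agree with the weight generating function once one sums over the admissible labels and positions of the non-empty short rhombi, which is exactly what manufactures the $q$-integer factors $[t]_q$.

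I expect this last step to be the main obstacle. The $q$-exponents have three interacting sources that must be tracked simultaneously --- the number of $\delta$-strips crossed, the number of $1$-columns crossed, and the parameter $t$ governing how the already-present $1$-columns interact with the new column --- and they are coupled to the choice of label ($\alpha$ versus $\delta$, $\beta$ versus $\gamma$) in the corner box, because the weight that Definition \ref{defn:weight} assigns to a square or a short rhombus depends on the label it sees below. Handling every branch and every degenerate index value, and checking that the resulting sums collapse to the $q$-integers appearing in Definition \ref{def:matrices}, is where essentially all the work lies; this is the content of Section \ref{sec:proof1}.
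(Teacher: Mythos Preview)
Your approach matches the paper's: part (1) reduces to the single-letter cases $D^{(t)}$, $E^{(t)}$, $A$ via matrix multiplication, and parts (2) and (3) then follow from the definitions of $\langle W|$ and $|V\rangle$ exactly as you describe. This is precisely how the paper deduces Theorem \ref{comb-interpret} from Lemma \ref{comb-lemma}. Two corrections are in order, though.

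First, the closing reference is wrong. The single-letter verification is Lemma \ref{comb-lemma} in Section \ref{sec:comb-interpret}, not Section \ref{sec:proof1}. Section \ref{sec:proof1} proves a different and much harder statement, namely that these matrices satisfy the Matrix Ansatz relations (Theorem \ref{thm:comb-ansatz}); the combinatorial interpretation itself is a short direct computation and not the obstacle you forecast.

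Second, your description of that computation is slightly off. The recursion is on $k$, and the pivotal tile is the \emph{lowest} tile $B$ of the new column lying in an $\alpha/\gamma$-strip, not simply the next tile up from the corner; tiles in $\beta$- or $\delta$-strips below $B$ are forced empty (contributing $u=1$ or $q$) and are transparent to the argument. For $D^{(t)}$ the recursion you display has \emph{two} branches, not three: either $B$ carries a Greek letter (which forces the corner box to be $\delta$ and yields the term $\delta(D^{(t)}+E^{(t)})_{i,j-1,k-1,\ell}$), or $B$ is empty with weight $u=1$ (yielding $D^{(t)}_{i,j,k-1,\ell-1}$). The branches encode what is placed in $B$, not ``which kind of strip the next tile up crosses.'' For $A$ there are indeed three branches ($\beta u$, $\delta q$, or empty with weight $qu$), matching the three summands in its recursion. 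The $[t]_q$ factors in the $k=0$ base case arise, as you correctly say, from summing the $t$ possible positions of a single $\alpha u$ or $\gamma q$ among the short rhombi.
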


Recall 
the definition of $\mathbf{Z}_{N,r}(\xi)$
from Definition \ref{def:fugacity}. 

\begin{corollary}\label{cor:ZNr}
We have that $\mathbf{Z}_{N,r}(\xi) = 
\sum_X \xi^{|X|_D} \langle W| X | V \rangle$, where the sum is 
over all $A$-compatible words $X$ such that $|X| = N$ and 
$|X|_A = r$.
\end{corollary}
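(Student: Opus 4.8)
The plan is to deduce this directly from part (3) of Theorem \ref{comb-interpret} together with Definition \ref{def:fugacity}. First I would recall that $\mathbf{Z}_{N,r}(\xi) = \sum_T w(T)\xi^{|T|_2}$, where the sum is over all rhombic staircase tableaux $T$ of size $(N,r)$ and $|T|_2$ is the number of $2$'s in $\type(T)$. Since every tableau of size $(N,r)$ has a well-defined type $\tau = \type(T) \in \{0,1,2\}^N$ with $|\tau|_1 = r$, I would partition the sum over $T$ according to $\type(T)$: $\mathbf{Z}_{N,r}(\xi) = \sum_{\tau} \xi^{|\tau|_2} \sum_{\type(T)=\tau} w(T)$, where $\tau$ ranges over $\{0,1,2\}^N$ with exactly $r$ ones.

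Next I would invoke the bijection from Definition \ref{def:Xtau}: sequences $\tau \in \{0,1,2\}^N$ with $|\tau|_1 = r$ are in bijection with $A$-compatible words $X$ with $|X| = N$ and $|X|_A = r$, via $\tau \leftrightarrow X(\tau)$, with $0$'s going to $E^{(j)}$'s, $1$'s to $A$'s, and $2$'s to $D^{(j)}$'s. Under this bijection, the number of $2$'s in $\tau$ equals $|X|_D$, the number of $D$-letters in $X$. Applying part (3) of Theorem \ref{comb-interpret} (with $t=0$, which is the relevant case since an $A$-compatible word is $(A,0)$-compatible), the inner sum $\sum_{\type(T)=\tau} w(T)$ is exactly $\langle W| X(\tau) | V \rangle$. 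Substituting and re-indexing the sum over $\tau$ as a sum over the corresponding words $X$ gives $\mathbf{Z}_{N,r}(\xi) = \sum_X \xi^{|X|_D} \langle W| X | V \rangle$, as claimed.

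There is essentially no obstacle here: the corollary is a bookkeeping consequence of a theorem proved elsewhere. The only point requiring a moment's care is matching the statistic — confirming that $|T|_2 = |X(\tau)|_D$ under the correspondence — which is immediate from Definition \ref{def:Xtau}, together with the observation that the generating function in part (3) of Theorem \ref{comb-interpret} records $w(T)$ with the convention $u=1$ (which, as noted in the remark following Definition \ref{defn:weight}, loses no information). Since Definition \ref{def:fugacity} likewise sets $u=1$ implicitly when convenient, the two generating functions agree term by term.
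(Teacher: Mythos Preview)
Your proof is correct and is exactly the argument the paper intends: the corollary is stated immediately after Theorem \ref{comb-interpret} with no separate proof, and the derivation you give---partitioning tableaux by type, using the bijection of Definition \ref{def:Xtau} to match $|\tau|_2$ with $|X|_D$, and invoking part (3) of Theorem \ref{comb-interpret}---is the natural (and only reasonable) way to read the corollary as a consequence of that theorem.
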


The main step in proving Theorem \ref{comb-interpret}
is the
following lemma, which
says that the matrices $A$, $D^{(i)}$ and $E^{(j)}$
are ``transfer matrices'' for building
rhombic staircase tableaux.  

\begin{lemma}\label{comb-lemma}
$A_{i,j,k,\ell}$ is 
the generating function for the weights of all possible 
new $1$-columns that we could add to the left of a 
rhombic staircase tableau with $i$ horizontal $\delta$-strips 
and $k$ horizontal $\alpha/\gamma$-strips, so as to obtain a new
tableau which has $j$ horizontal $\delta$-strips and $\ell$ horizontal 
$\alpha/\gamma$-strips.

$D^{(t)}_{i,j,k,\ell}$ is the generating function for the weights
of all possible new $02$-columns with an $\alpha$ or $\delta$ in the bottom box
that we could add to the left of a rhombic staircase tableau with precisely $t$ $1$-columns,
$i$ horizontal $\delta$-strips, and $k$ horizontal $\alpha/\gamma$-strips,
so as to obtain a tableau which has $j$ horizontal $\delta$-strips
and $\ell$ horizontal $\alpha/\gamma$-strips.
Similarly for $E^{(t)}_{i,j,k,\ell}$, where the new $02$-column has a $\beta$
or $\gamma$ in the bottom box.
\end{lemma}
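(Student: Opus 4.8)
The plan is to prove the three claims (for $A$, for $D^{(t)}$, and for $E^{(t)}$) simultaneously by induction on the size of the column being added, i.e.\ on the number of tiles in the new column, which by Definition \ref{defn:maxtiling} is controlled by $k+\ell$ (the total number of horizontal strips seen so far) together with $t$. The key observation is that in the maximal tiling, a new $02$-column consists of a stack of squares at the bottom (one more than the number of $02$-columns already present, which equals the number of horizontal $\alpha/\gamma$-strips plus $\delta$-strips in the current tableau, i.e.\ $i+k$) topped by short rhombi (one per existing $1$-column, i.e.\ $t$ of them), while a new $1$-column consists of tall rhombi (one per existing $02$-column). I would set up the correspondence between the recursive clauses in Definition \ref{def:matrices} and the operation of building such a column tile-by-tile from the top down: peeling off the topmost tile of the new column corresponds exactly to the ``otherwise'' recursions, which express $D^{(t)}_{i,j,k,\ell}$, $E^{(t)}_{i,j,k,\ell}$, $A_{i,j,k,\ell}$ in terms of entries with $k$ (and for $D$ also $\ell$, for $A$ also $\ell$) decremented.

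First I would handle $A_{i,j,k,\ell}$. A $1$-column added to the left has one tall rhombus for each of the $k-\ell$ \dots wait, more precisely one tall rhombus for each existing $02$-column; and the existing $02$-columns are exactly the vertical strips, of which there are (number of $\alpha/\gamma$-strips that are vertical) $+$ (number of $\delta$-strips), a quantity tracked by $i$ and $k$. The top tall rhombus either is empty (and then, by Definition \ref{defn:weight}, carries $uq=q$, $u^2=1$, or $q^2$ depending on what it sees below, but with $u=1$ this is $q$ or $q^2$ or $1$) or carries a $\beta u=\beta$ or $\delta q$; and adding it changes the count of horizontal strips: a $\delta q$ or an empty-$q$ above a $\delta$-strip keeps a $\delta$-strip but an empty square can convert an $\alpha/\gamma$-strip... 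I would carefully match each of the three summands $\beta A_{i,j,k-1,\ell}$, $\delta q A_{i,j-1,k-1,\ell}$, $q A_{i,j,k-1,\ell-1}$ to the three possibilities for the topmost tile (fill with $\beta$-type, fill with $\delta$-type which increments the $\delta$-strip count $i\to i{+}1$ hence the $j-1$, or leave empty which decrements the $\alpha/\gamma$-strip count $k\to k{+}1$ hence the $\ell-1$), and match the base case $q^{2i}$ (when $k=\ell=0$, no tiles to add, $i=j$) to the empty column; the boundary conditions $\ell>k$ or $j-i>k-\ell$ giving $0$ encode the impossible configurations. Then $D^{(t)}$ and $E^{(t)}$ are handled the same way, the difference being that their columns have a bottom portion of squares (whose lowest box must be nonempty, forced to an $\alpha$ or $\delta$ for $D$, a $\beta$ or $\gamma$ for $E$) and a top portion of $t$ short rhombi; the base cases $k=0,\ell=1,i=j$ (bottom box an $\alpha$, with coefficient $\alpha q^i$ resp.\ $\gamma q^{2t+i}$) and $k=0,\ell=0$ (bottom box a $\delta$ resp.\ $\beta$, with the coefficient $\delta q^i(q^t+(\alpha+\gamma q^t)[t]_q)$ resp.\ $\beta q^i(\dots)$) are where the short-rhombus contributions get summed up, and I would verify that $q^t+(\alpha+\gamma q^t)[t]_q = \sum_{\text{fillings of }t\text{ short rhombi}}(\text{weight})$ by a short direct computation — a short rhombus above a $\delta$-column is empty ($uq=q$) or carries $\beta u$ or $\delta q$, contributing a factor, and one tracks that after the $\alpha$/$\delta$ bottom the relevant strip is an $\alpha/\gamma$-strip vs.\ $\delta$-strip.

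The main obstacle I anticipate is the bookkeeping of \emph{which} horizontal strips are $\delta$-strips versus $\alpha/\gamma$-strips as tiles are added, and in particular correctly tracking how filling a tile with a Greek letter (vs.\ leaving it empty, which propagates the label of the nearest labeled tile to the right/below) changes those counts — the rules in Definition \ref{defn:weight} about an empty tile ``seeing'' the closest labeled tile to its right or below are exactly what make the recursion local, but getting the indices $i\to i{+}1$, $k\to k{+}1$ lined up with the $j-1$, $k-1$, $\ell-1$ shifts in the matrix recursions requires care. I would organize this by fixing, once and for all, the convention that $i$ counts horizontal $\delta$-strips and $k$ counts horizontal $\alpha/\gamma$-strips of the tableau \emph{before} adding the column, $j$ and $\ell$ the corresponding counts \emph{after}, and then checking the top-tile case analysis against the recursion clause by clause; the constraints ``$j<i$'' and ``$\ell>k+1$'' giving entry $0$ then have the clean meaning that one cannot destroy horizontal strips and one can create at most one new horizontal strip per column (from its bottom square). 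Once the transfer-matrix interpretation is established, Theorem \ref{comb-interpret} and Corollary \ref{cor:ZNr} follow by iterating the lemma $N$ times, reading the words right-to-left, with $\langle W|$ initializing the empty tableau (one with $0$ of each kind of strip) and $|V\rangle$ summing over all terminal strip-counts.
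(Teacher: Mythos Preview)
Your overall strategy---showing that the combinatorial generating function satisfies the same recursion as the matrix entries in Definition~\ref{def:matrices}---is exactly what the paper does. However, your specific decomposition has a gap: you propose to peel off the \emph{topmost tile} of the new column, but the ``otherwise'' recursions for $A$, $D^{(t)}$, $E^{(t)}$ all decrement $k$ (the number of horizontal $\alpha/\gamma$-strips), so the tile being removed must be one lying in an $\alpha/\gamma$-strip, not simply the topmost one. The topmost tile of a $1$-column could lie in a $\beta$-strip or a $\delta$-strip (where it is forced empty and offers no case split), and the topmost tile of a $02$-column is a short rhombus when $t>0$, which is not in any horizontal strip at all. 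The paper instead singles out the \emph{lowest} tile $B$ of the column lying in an $\alpha/\gamma$-strip. For $A$ the tall rhombi in different $\alpha/\gamma$-strips are independent, so any choice works; but for $D^{(t)}$ the choice of \emph{lowest} is essential: if $B$ is filled with a Greek letter, then every square strictly between $B$ and the bottom lies in a $\beta$- or $\delta$-strip and is forced empty, so the bottom square cannot be $\alpha$ (else $B$ would be forced empty) and must be $\delta$---this is where the factor $\delta$ in $\delta(D^{(t)}+E^{(t)})_{i,j-1,k-1,\ell}$ comes from. And if $B$ is empty, it sees the bottom square (an $\alpha$ or $\delta$) below, so by Figure~\ref{mnemonic1} it gets weight $u=1$, giving the term $D^{(t)}_{i,j,k-1,\ell-1}$. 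Neither of these facts is available if you peel from the top.

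Two smaller corrections. First, short rhombi may contain $\alpha u$ or $\gamma q$, not $\beta u$ or $\delta q$ (that is for tall rhombi); this matters for your computation of the $k=0$ base case of $D^{(t)}$, where the term $(\alpha+\gamma q^t)[t]_q$ records the choice of which short rhombus (if any) carries the unique $\alpha u$ or $\gamma q$. Second, the base case $A_{i,i,0,0}=q^{2i}$ does not mean ``no tiles to add'': the $1$-column still has one tall rhombus per existing horizontal strip, but with $k=0$ they all lie in $\beta$- or $\delta$-strips and are forced empty, contributing $u^2=1$ for each of the $\beta$-strips and $q^2$ for each of the $i$ $\delta$-strips, whence $q^{2i}$.
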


\begin{proof}
Let $\tilde{A}_{ijk\ell}$ denote 
the generating function for the weights of all possible 
new $1$-columns that we could add to the left of a 
rhombic staircase tableau with $i$ horizontal $\delta$-strips 
and $k$ horizontal $\alpha/\gamma$-strips, so as to obtain a new
tableau which has $j$ horizontal $\delta$-strips and $\ell$ horizontal 
$\alpha/\gamma$-strips.
We will show that $\tilde{A}_{ijk\ell} = A_{ijk\ell}$ by showing that 
$\tilde{A}$ satisfies the same recurrences.

Note that $\tilde{A}_{ijk\ell}=0$ if $\ell>k$ because a $1$-column consists of 
tall rhombi, and a tall rhombus may never contain an $\alpha$ or $\gamma$.
Now consider a potential $1$-column that contributes to $\tilde{A}_{ijk\ell}$, and 
let $h$ be the number of its tiles containing a $\delta$.  Then 
$h=j-i$, but also $k-\ell \geq h$.  It follows that $k-l\geq j-i$, and hence 
$\tilde{A}_{ijk\ell} = 0$ if $j-i>k-\ell$.

Now suppose that $i=j$ and $k=\ell=0$.
Then the horizontal strips of the 
$1$-columns contributing to $\tilde{A}_{ijk\ell}$ 
are all $\beta$-strips or $\delta$-strips.  
So in particular, each tile must either 
contain a $u^2=1$ or 
a $q^2$, based on whether its horizontal strip is a $\beta$-strip 
or a $\delta$-strip.
It follows that there is only one column, and it has weight $q^{2i}$.

Finally suppose that $k>0$.  (Note that if $k=0$ then this implies 
that $\ell=0$ and $j\geq i$, in which case we are in one of the cases considered previously.)
Let us consider the different ways that we can fill in 
the lowest tile $B$ 
of a $1$-column contributing to $\tilde{A}_{ijk\ell}$, such that 
$B$ belongs to an $\alpha/\gamma$-strip.
(Since $k>0$, such a tile exists.)
Either $B$ contains a $\beta u$ or a $\delta q$ or 
is empty, in which case Figure \ref{mnemonic2} implies that it 
gets assigned a $qu$.
If $B$ contains a $\beta u$, then the process of filling in the other tiles has a 
generating function of $\tilde{A}_{i,j,k-1,\ell}$, so we get a contribution of 
$\beta \tilde{A}_{i,j,k-1,\ell}$ (recall that we are setting $u=1$).
If $B$ contains a $\delta q$, then filling in the other tiles  
has generating function $\tilde{A}_{i,j-1,k-1,\ell}$, so we get a contribution of 
$\delta q \tilde{A}_{i,j-1,k-1,\ell}$.  And if $B$ contains a $qu$,
then filling in the other tiles has generating function $\tilde{A}_{i,j,k-1,\ell-1}$,
so we get a contribution of $q \tilde{A}_{i,j,k-1,\ell-1}$.  

This shows that
if $k>0$, then $\tilde{A}_{i,j,k,\ell} = \beta \tilde{A}_{i,j,k-1,\ell} + 
\delta q \tilde{A}_{i,j-1,k-1,\ell} + q\tilde{A}_{i,j,k-1,\ell-1}$.

Let $\tilde{D}^{(t)}_{ijk\ell}$ denote the generating function for $02$-columns
described in the second sentence of 
Lemma 
\ref{comb-lemma}.  As before, we want to show that $\tilde{D}^{(t)}$ satisfies the defining 
recurrence for $D^{(t)}$.

Note that $\tilde{D}^{(t)}_{ijk\ell}=0$ if $j<i$ because
adding a new column to a rhombic staircase tableau never
decreases the number of $\delta$-strips.
Also $\tilde{D}^{(t)}_{ijk\ell}=0$ if $\ell > k+1$ because when
we add a new column we can never increase the number
of $\alpha/\gamma$-strips by more than $1$.

Now suppose that $k=0$.  Since 
we are starting from
a tableau with 
$0$ horizontal $\alpha/\gamma$-strips,
it follows that any new $02$-column that we can add to the tableau will 
have either an $\alpha$ or $\delta$ in the bottom square,
with all squares above that one empty, and with $t$ short rhombi on top 
of the squares.  If we put an $\alpha$ in the bottom square, then 
the short rhombi in that column must also be empty, so the weight of 
the resulting column will be $\alpha q^i$.  The resulting tableau will have
$\ell=1$ horizontal $\alpha/\gamma$-strips and $j=i$ horizontal
$\delta$-strips.
On the other hand, if we put a $\delta$ in the bottom square of the new column,
the weight of the \emph{squares} in the column will be $\delta q^i$.  
As for the short rhombi, we can either leave them all empty (so that they get
weight $(qu)^t = q^t$), or 
we can put an $\alpha u$ or $\gamma qu$ into one of them.
There are $t$ short rhombi, and the weight of the $t$ different ways 
of putting an $\alpha u$ into one of them is $\alpha (1+q+\dots +q^{t-1})$.
The weight of the $t$ different ways of putting a $\gamma q$ into one of them
is $\gamma q^t(1+q+\dots+q^{t-1})$.  
Therefore the weight of the various columns we can add with a $\delta$ in the bottom square
is $\delta q^i (q^t+(\alpha + \gamma q^t)[t]_q)$.
And any resulting tableau will have $\ell=0$ horizontal  
$\alpha/\gamma$-strips, and $j=i+1$ horizontal $\delta$-strips.

From this discussion it follows that
$\tilde{D}^{(t)}_{ijk\ell}=\alpha q^i$ when $k=0$, $\ell=1$, and $j=i$,
and $\tilde{D}^{(t)}_{ijk\ell}=\delta q^i(q^t+(\alpha+\gamma q^t)[t]_q)$ when $k=\ell=0$ and $j=i+1$.

In all other situations, we can assume that $k \geq 1$.
Suppose that we are adding a new column $C$
with an $\alpha$ or $\delta$ at the bottom to the left of a rhombic staircase
tableau with $i$ horizontal $\delta$-strips and $k$
horizontal $\alpha/\gamma$-strips, so as to create a new tableau $T$.
Consider the lowest square $B$ of $C$ 
which belongs to 
an $\alpha/\gamma$-strip in $T$ (such a square exists since $k \geq 1$).
If we fill $B$ with an $\alpha, \beta, \gamma$ or $\delta$,
then the bottom square of $C$ must contain a $\delta$ (not an $\alpha$).
In this case, if we ignore that bottom $\delta$,
then our choices for $C$ are exactly
the same as our choices would be for adding a
new column to the left of a tableau
with $i$ horizontal $\delta$-strips and $k-1$
horizontal $\alpha/\gamma$-strips.
Therefore, filling $B$ with an $\alpha, \beta, \gamma$ or $\delta$
gives us a contribution of
$\delta (\tilde{D}^{(t)}+\tilde{E}^{(t)})_{i,j-1,k-1,\ell}$ to our generating function.

On the other hand, if we leave $B$ empty,
then this square will get a weight $u=1$.
Filling the rest of the column $C$ is like
adding a new column to a  tableau
with $i$ horizontal $\delta$-strips and $k-1$ horizontal 
$\alpha/\gamma$-strips.  Therefore leaving $B$ empty
gives us a contribution of
$\tilde{D}^{(t)}_{i,j,k-1,\ell-1}$ to our generating function.

It follows that when $k \geq 1$,
$\tilde{D}^{(t)}_{i,j,k,\ell} = \delta (\tilde{D}^{(t)}+\tilde{E}^{(t)})_{i,j-1,k-1,\ell} 
+ \tilde{D}^{(t)}_{i,j,k-1,\ell-1}.$

The proof of the statement of the lemma concerning $E^{(t)}_{i,j,k,\ell}$
is 
analogous to the proof we gave for $D^{(t)}_{i,j,k,\ell}$.
\end{proof}

\begin{proof}[Proof of Theorem \ref{comb-interpret}]
The first item follows from Lemma \ref{comb-lemma} and the definition
of matrix multiplication.
Assuming that $t=0$, multiplying at the left by $\langle W|$ has the effect that we start
with the empty tableau and then add columns according to $X$:
so
$(W X)_{j\ell}$ is the generating function
for rhombic staircase tableaux of type $\tau(X)$ which have
$j$ horizontal $\delta$-strips and $\ell$ horizontal 
$\alpha/\gamma$-strips.  
Finally, multiplying $\langle W |X$ on the right by $|V\rangle$
has the effect of summing over all $\delta$ and $\ell$,
so $\langle W|X|V\rangle$ is the generating function for all rhombic staircase
tableaux of type $\tau(X)$.
\end{proof}

\section{The proof that our matrices satisfy the Matrix Ansatz}\label{sec:proof1}

Recall that in Section \ref{sec:matrices} we defined some matrices 
and vectors, and showed in Theorem \ref{comb-interpret} (3) that 
appropriate matrix products compute generating functions
for rhombic staircase tableaux.  In this section, we will 
show in Theorem \ref{thm:comb-ansatz} 
that these matrix products also 
satisfy the Matrix Ansatz relations of Theorem \ref{thm:MA}.  This 
implies that these matrix products also 
compute steady 
state probabilities for the two-species ASEP.  Our main theorem,
Theorem \ref{thm:main}, will therefore follow 
once we have proved Theorem \ref{thm:comb-ansatz}.

\begin{thm}\label{thm:comb-ansatz}
The matrices and vectors from
Definition \ref{def:matrices} satisfy 
relations (I)-(V) of Theorem \ref{thm:MA}, 
with
$\lambda_n = \alpha\beta - \gamma\delta q^{n-1}$. 
\end{thm}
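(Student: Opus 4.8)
The plan is to verify each of the five relations (I)--(V) directly, using the recursive definitions of the matrices $D^{(t)}$, $E^{(t)}$, $A$ from Definition~\ref{def:matrices}, rather than through any closed form. Since the vectors $\langle W|$ and $|V\rangle$ simply initialize and sum, and each relation is of the form ``(combination of matrix products applied to a common prefix $X$ and suffix $Y$) $= \lambda_{\bullet}\cdot$(matrix product)'', the essential content is an identity among the matrix families themselves. I would first reduce each relation to a purely matrix-level statement: for (I), that $D^{(t)}E^{(t)} - qE^{(t)}D^{(t)} = \Lambda\,(D^{(t)}+E^{(t)})$ in a suitable graded sense, where $\Lambda$ is the diagonal-ish operator recording the correct index shift and acting as multiplication by $\lambda_{\|XY\|+2}$; similarly (IV) becomes $D^{(t)}A - qAD^{(t+1)} = \Lambda' A$ and (V) becomes $AE^{(t+1)} - qE^{(t)}A = \Lambda'' A$, while (II) and (III) are boundary relations involving $\langle W|$ and $|V\rangle$ that pin down the affine terms $\lambda_{\|X\|+1}$. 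The bookkeeping of how $\|X\|$, $\|XY\|$ translate into the subscripts $i,j,k,\ell$ (recall $\|X\| = |X| + |X|_A$, and the $A$-compatibility forces the superscript $t$ to equal $|X|_A$) is where Theorem~\ref{comb-interpret} is useful as a sanity check: the combinatorial interpretation tells us precisely which $\lambda$-index corresponds to a tableau with given strip counts, namely $n = $ (number of columns so far) $+ r$, which matches $\lambda_n = \alpha\beta - \gamma\delta q^{n-1}$ being the correct normalizing scalar.

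The key computational steps, in order: (1) establish (II) and (III) first, since they are the shortest — here one uses that $\langle W|$ picks out $i=k=0$, so $\langle W|XD^{(t)}|V\rangle$ and $\langle W|XE^{(t)}|V\rangle$ reduce to sums over the $k=0$ branches of the recursions, where the explicit values $\alpha q^i$, $\delta q^i(q^t+(\alpha+\gamma q^t)[t]_q)$, $\gamma q^{2t+i}$, $\beta q^i(q^t+(\alpha+\gamma q^t)[t]_q)$ live; the combinations $\beta D^{(t)} - \delta E^{(t)}$ and $\alpha E^{(0)} - \gamma D^{(0)}$ should collapse to the scalar $\alpha\beta - \gamma\delta q^{n-1}$ times the identity on the relevant entries after a short $[t]_q$ manipulation. (2) Prove (I) by induction on the length of the suffix $Y$, peeling off the last letter of $Y$ and invoking the recursion for $D^{(t)}$ or $E^{(t)}$ in its $k\ge 1$ branch; the case $|Y|=0$ is again a direct computation against $|V\rangle$. (3) Prove (IV) and (V) similarly by induction, now peeling letters of $Y'$, and crucially using the $A$-recursion $A = \beta A_{\cdot,\cdot,k-1,\ell} + \delta q A_{\cdot,j-1,k-1,\ell} + qA_{\cdot,\cdot,k-1,\ell-1}$ together with the $D$- and $E$-recursions to check that the commutator-like combination telescopes. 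Throughout, the shift $t \mapsto t+1$ appearing in (IV) and (V) is exactly compensated by the $q^t$, $q^{2t}$ factors built into the ``$k=0$'' seeds of $D^{(t)}$ and $E^{(t)}$ — this is the reason the superscripts were defined with those powers in the first place.

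The main obstacle I expect is relation (I): unlike (IV) and (V), whose left-hand sides are genuinely ``one $A$ interacting with one $D$ or $E$,'' relation (I) involves the product $D^{(t)}E^{(t)}$, so the inductive peeling has to track a quadratic interaction, and the recursions for $D^{(t)}$ and $E^{(t)}$ each have an inhomogeneous piece (the $k=0$ seed) plus a two-term recursive piece, so the product expands into several cases that must be recombined. The cleanest route is probably to prove an auxiliary ``fully commuted'' identity at the level of matrix entries — e.g. an explicit formula or a clean recursion for $(D^{(t)}E^{(t)} - qE^{(t)}D^{(t)})_{ijk\ell}$ — and then match it against $\lambda_{n}(D^{(t)}+E^{(t)})_{ijk\ell}$ with the index $n$ read off correctly; alternatively one can argue combinatorially via Theorem~\ref{comb-interpret}, interpreting both sides as generating functions for tableaux obtained by adding two columns and checking a sign-reversing involution on the ``bad'' configurations. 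I would attempt the algebraic induction first and fall back on the combinatorial involution if the case analysis becomes unwieldy. Once (I)--(V) are all in hand with $\lambda_n = \alpha\beta - \gamma\delta q^{n-1}$, Theorem~\ref{thm:main} follows immediately by combining Theorem~\ref{thm:MA}, Theorem~\ref{comb-interpret}(3), and this theorem, as already noted in the text.
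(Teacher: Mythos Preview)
Your plan has the right shape but misses the key technical idea that makes the paper's induction go through, and contains a concrete error in the treatment of (II). You claim that for (II) one uses that $\langle W|$ picks out $i=k=0$, so that $\langle W|XD^{(t)}|V\rangle$ reduces to the $k=0$ seeds of the $D^{(t)}$-recursion. This is not right: $\langle W|$ sets the \emph{leftmost} indices to $0$, but $D^{(t)}$ is multiplied on the right of $X$, so its row indices are $(WX)_{j,\ell}$ for general $j,\ell$, not $(0,0)$. Consequently (II) is not a short computation; it is in fact essentially as hard as (I), (IV), (V), and in the paper all four are reduced (via Proposition~\ref{prop:refined} and Lemmas~\ref{lem:24-5}, \ref{lem:2-1}) to just two refined identities, equations (2) and (4), at the level of $(WX\cdot)_{j,\ell}$. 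Only (III) admits the short combinatorial proof you have in mind.

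More seriously, the relations you want are \emph{not} matrix identities of the form $D^{(t)}E^{(t)}-qE^{(t)}D^{(t)}=\Lambda(D^{(t)}+E^{(t)})$, even with a grading operator $\Lambda$: there is a correction term that only vanishes after hitting with $\langle W|$ (i.e.\ at $k=0$). The paper discovers this correction explicitly: it introduces auxiliary ``column'' matrices $F^{(t)}$ (Definition~\ref{def:FG}) and proves the strengthened statements of Theorem~\ref{THM24}, e.g.\ $\beta XD^{(r+t)}_{0,j,k,\ell}-\delta XE^{(r+t)}_{0,j-1,k,\ell}=\alpha\beta X_{0,j,k,\ell-1}-\gamma\delta q^{2t+k+\|X\|}X_{0,j-1,k,\ell-1}+(1-q)F^{(t)}X_{0,j,k,\ell-1}$, which carry the extra $(1-q)F^{(t)}X$ term needed to close the induction on $|X|$. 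The inductive step (peeling the \emph{leftmost} letter of $X$, not the last letter of $Y$) then hinges on three ``commutator identities'' for $EF-FE$, $DF-FD$, $AF-FA$ (Proposition~\ref{EFDF}), together with the shift lemma $Y_{i,j,k,\ell}=q^{\|Y\|}Y_{i-1,j-1,k,\ell}$ (Lemma~\ref{ij_reduction}), which is the mechanism by which the global index $\|XY\|$ in $\lambda_{\|XY\|+2}$ is converted into local matrix data. Your sketch anticipates none of these ingredients; without the $F^{(t)}$ correction your proposed inductions on (I), (IV), (V) would not telescope, and your fallback combinatorial involution would have to reinvent the $F^{(t)}$-strips in another guise.
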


The proof of Theorem \ref{thm:comb-ansatz} is long 
and technical, but we will try to make the
structure of the proof transparent from the titles
of the following subsections.

\subsection{Reducing  Theorem \ref{thm:comb-ansatz}
to (2) and (4) from Proposition 
\ref{prop:refined}}

In this section we show that Theorem \ref{thm:comb-ansatz} is 
a consequence of 
equations (2) and (4) from Proposition
\ref{prop:refined}.

\begin{lem}
Equation (III) of Theorem \ref{thm:MA} holds.
\end{lem}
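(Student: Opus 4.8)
The plan is to reduce Equation (III) to a short computation with the matrices of Definition \ref{def:matrices}, together with a single combinatorial identity provided by Theorem \ref{comb-interpret}. First I would record the elementary fact that for any product $M$ of our matrices, $\langle W|M|V\rangle = \sum_{j,\ell} M_{0,j,0,\ell}$, since $\langle W|$ selects the ``row'' $(i,k)=(0,0)$ and $|V\rangle$ sums over all ``columns'' $(j,\ell)$. Next I would read off from the recursions defining $D^{(0)}$ and $E^{(0)}$ exactly which entries $D^{(0)}_{0,a,0,b}$ and $E^{(0)}_{0,a,0,b}$ are nonzero: using $[0]_q=0$ and the vanishing clauses ``$j<i$ or $\ell>k+1$'', one finds that the only nonzero such entries are $E^{(0)}_{0,0,0,0}=\beta$, $E^{(0)}_{0,0,0,1}=\gamma$, $D^{(0)}_{0,1,0,0}=\delta$, $D^{(0)}_{0,0,0,1}=\alpha$ (every ``otherwise'' branch forces a negative index $k-1=-1$ and hence vanishes). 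Consequently, for any $A$-compatible word $X$,
$$(E^{(0)}X)_{0,j,0,\ell} = \beta X_{0,j,0,\ell} + \gamma X_{0,j,1,\ell}, \qquad (D^{(0)}X)_{0,j,0,\ell} = \delta X_{1,j,0,\ell} + \alpha X_{0,j,1,\ell}.$$

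Then I would form the combination $\alpha(E^{(0)}X)_{0,j,0,\ell} - \gamma(D^{(0)}X)_{0,j,0,\ell}$: the two $\alpha\gamma X_{0,j,1,\ell}$ terms cancel, leaving $\alpha\beta X_{0,j,0,\ell} - \gamma\delta X_{1,j,0,\ell}$. Summing over $j$ and $\ell$ and using the identity of the previous paragraph, the left-hand side of (III) becomes $\alpha\beta\,\langle W|X|V\rangle - \gamma\delta\sum_{j,\ell} X_{1,j,0,\ell}$. Since $\lambda_{\|X\|+1} = \alpha\beta - \gamma\delta q^{\|X\|}$, it therefore remains only to establish the key identity
$$\sum_{j,\ell} X_{1,j,0,\ell} = q^{\|X\|}\,\langle W|X|V\rangle = q^{\|X\|}\sum_{j,\ell} X_{0,j,0,\ell}.$$

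For this identity I would invoke the combinatorial interpretation of the matrices (Theorem \ref{comb-interpret}(1) together with Lemma \ref{comb-lemma}, with $u=1$): $X_{1,j,0,\ell}$ is the weight generating function for ways of adjoining $|X|$ columns of type $\tau(X)$ to a rhombic staircase tableau having one horizontal $\delta$-strip, no horizontal $\alpha/\gamma$-strip, and no $1$-columns, so as to reach $j$ horizontal $\delta$-strips and $\ell$ horizontal $\alpha/\gamma$-strips, while $X_{0,j-1,0,\ell}$ counts the same data starting from no horizontal $\delta$-strips. The bijection inserts one extra horizontal $\delta$-strip running through the whole base region: each newly adjoined $02$-column then contributes exactly one additional square to this strip, lying to the left of its $\delta$ and hence empty with weight $q$, and each newly adjoined $1$-column contributes exactly one additional tall rhombus to this strip, again to the left of the $\delta$ and hence empty with weight $q^2$. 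As $X$ has $|X|-|X|_A$ letters of type $D$ or $E$ ($02$-columns) and $|X|_A$ letters $A$ ($1$-columns), the total extra weight acquired is $q^{(|X|-|X|_A)+2|X|_A}=q^{|X|+|X|_A}=q^{\|X\|}$, so $X_{1,j,0,\ell}=q^{\|X\|}X_{0,j-1,0,\ell}$ for all $j,\ell$; summing yields the key identity, and plugging it back gives $(\alpha\beta-\gamma\delta q^{\|X\|})\langle W|X|V\rangle=\lambda_{\|X\|+1}\langle W|X|V\rangle$. (Alternatively, the key identity can be proved by induction on the number of letters of $X$ straight from the recursions of Definition \ref{def:matrices}, checking the three cases $X=AX'$, $X=D^{(t)}X'$, $X=E^{(t)}X'$.)

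The only real content — and the step I would be most careful about — is getting the exponent in the key identity exactly right: that a $1$-column crossing a horizontal $\delta$-strip costs $q^2$ rather than $q$, which is what promotes the extra weight from $q^{|X|}$ to $q^{\|X\|}=q^{|X|+|X|_A}$ and thereby matches the prescribed eigenvalue $\lambda_n=\alpha\beta-\gamma\delta q^{n-1}$. Everything else is a direct unwinding of the matrix definitions and of the transfer-matrix description of rhombic staircase tableaux already established.
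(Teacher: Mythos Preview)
Your proof is correct and follows essentially the same logic as the paper's: both split $\langle W|E^{(0)}X|V\rangle$ and $\langle W|D^{(0)}X|V\rangle$ into two pieces according to the content of the topmost corner box (your four nonzero matrix entries $\beta,\gamma,\delta,\alpha$ correspond exactly to the four pictures in the paper's figure), cancel the two $\alpha\gamma$ cross terms, and then reduce the remaining two terms to $\langle W|X|V\rangle$ via the horizontal-$\delta$-strip shift. The only difference is presentational: the paper argues directly with tableaux and ``delete the topmost horizontal strip,'' whereas you translate everything into matrix entries first and phrase the last step as the index-shift identity $X_{1,j,0,\ell}=q^{\|X\|}X_{0,j-1,0,\ell}$, which is precisely what the paper records immediately afterwards as Lemma~\ref{ij_reduction} (with the same combinatorial justification you give).
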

\begin{proof}
We use a simple combinatorial argument to show 
(III), namely that for any $A$-compatible word $X$, we have 
that 
$$\alpha \langle W|E^{(0)} X|V \rangle - \gamma \langle W| D^{(0)} X|V 
\rangle = 
\lambda_{\| X \|+1}\langle W|X |V \rangle.$$
By Theorem \ref{comb-interpret}, we can interpret 
each of the terms in this equation in terms of rhombic 
staircase tableaux, for example 
$ \langle W|E^{(0)} X|V \rangle$ 
(respectively, 
$\langle W| D^{(0)} X|V \rangle$)
is the generating function 
for  tableaux of type $\tau(E^{(0)}X)$ (resp. $\tau(D^{(0)}X)$).
Such a tableau has a $\beta$ or $\gamma$ (resp. $\alpha$ or 
$\delta$) in 
its upper-right corner box.

See the first four terms of 
Figure \ref{fig:comb} for a pictorial representation 
of the quantity
$\alpha \langle W|E^{(0)} X|V \rangle - \gamma \langle W| D^{(0)} X|V 
\rangle$.   Note that the $u$'s in the first term, and the 
$q$'s in the fourth term, are forced by the presence of the 
$\beta$ and $\delta$, respectively.  All tiles left blank 
can be filled arbitrarily, so as to create a rhombic staircase
tableaux of the appropriate type.
So e.g. the first term
represents the sum of the weights of all tableaux of type 
$\tau(E^{(0)}X)$ which 
can be obtained by filling in its blank tiles. 

\begin{figure}[h]
\centering
\includegraphics[height=1in]{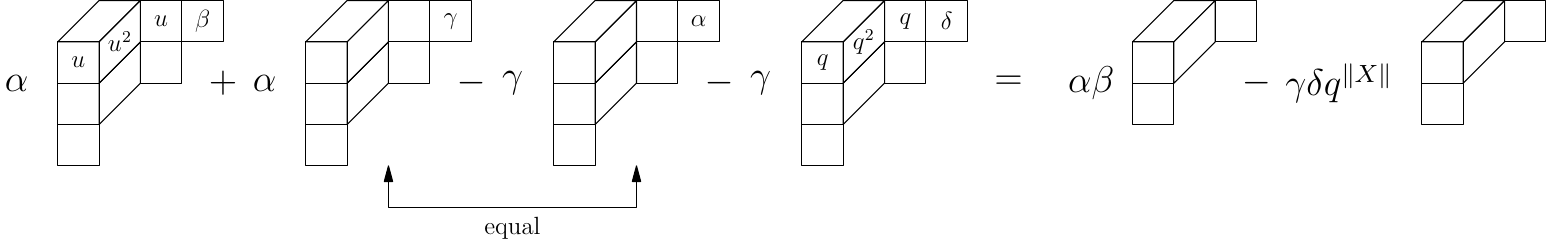}
\caption{The combinatorial proof of (III)}
\label{fig:comb}
\end{figure}

It is easy to see that the second and third terms in
Figure \ref{fig:comb} are equal to each other and hence 
cancel out.  Moreover, 
one may delete the topmost horizontal strip from each of the
first and fourth terms, so as to obtain the fifth and sixth terms
in  Figure \ref{fig:comb}.  (Remember that we set $u=1$).
To complete the proof, we simply observe that the 
sum of the fifth and sixth terms in Figure \ref{fig:comb}
is equal to 
$\lambda_{\| X \|+1}\langle W|X |V \rangle.$
\end{proof}

To prove the other equations from Theorem \ref{thm:MA}, we
reduce them to the more refined statements of 
Proposition \ref{prop:refined}.
First we prove the following useful lemma.

\begin{lem}\label{ij_reduction}
For any $(A,t)$-compatible word $Y$  
in $\{A, D^{(i)}, E^{(j)} \ \vert \ i,j\in \N\}$,
we have that $Y_{i,j,k,\ell} = q^{\| Y \|}Y_{i-1,j-1,k,\ell}$.
\end{lem}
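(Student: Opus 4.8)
The statement to prove is Lemma \ref{ij_reduction}: for any $(A,t)$-compatible word $Y$ in $\{A, D^{(i)}, E^{(j)} \ \vert \ i,j\in \N\}$, one has $Y_{i,j,k,\ell} = q^{\| Y \|}Y_{i-1,j-1,k,\ell}$. The plan is to prove this by induction on the length $|Y|$ of the word, using the recursive definitions of the matrices $A$, $D^{(t)}$, $E^{(t)}$ in Definition \ref{def:matrices} for the base case, and the definition of matrix multiplication $(MM')_{i,j,k,\ell} = \sum_{a,b} M_{i,a,k,b}M'_{a,j,b,\ell}$ for the inductive step.

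\textbf{Base case.} First I would establish the identity for a single-letter word $Y$, i.e. for each of the matrices $A$, $D^{(t)}$, $E^{(t)}$ separately, showing $A_{i,j,k,\ell} = q^{\|A\|} A_{i-1,j-1,k,\ell} = q^3 A_{i-1,j-1,k,\ell}$ (since $\|A\| = |A| + |A|_A = 1+1 = 2$ — wait, actually $\|A\| = 2$, and for $D^{(t)}$ or $E^{(t)}$ we have $\|D^{(t)}\| = 1$). I would verify this directly from the recursive definitions, noting that each occurrence of the index $i$ in the closed-form/recursive formula appears in a factor $q^i$ or $q^{2i}$, etc. For $A$ one can use the explicit formula from the Remark, $A_{i,j,k,\ell} = \delta^{j-i}\beta^{k-\ell-(j-i)}q^{\ell+i+j}\binom{k}{\ell}\binom{k-\ell}{j-i}$: replacing $i,j$ by $i-1,j-1$ leaves $j-i$, $k-\ell-(j-i)$, and both binomials unchanged, and changes $q^{\ell+i+j}$ to $q^{\ell+i+j-2} = q^{-2}\cdot q^{\ell+i+j}$, so $A_{i,j,k,\ell} = q^2 A_{i-1,j-1,k,\ell}$, matching $q^{\|A\|}$. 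For $D^{(t)}$ and $E^{(t)}$ I would check the three nonzero cases of the recursion: the entries $\alpha q^i$, $\delta q^i(\dots)$, $\gamma q^{2t+i}$, $\beta q^i(\dots)$ all carry exactly one factor $q^i$ (matching $q^{\|D^{(t)}\|} = q^1$), and in the recursive "otherwise" branch, replacing $i$ by $i-1$ throughout reduces each appearing entry by the same power $q$ by induction on $j+\ell$ (or on whatever parameter the recursion decreases), so the whole expression scales by $q$.

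\textbf{Inductive step.} Writing $Y = Y' Y''$ where $Y'$ is the first letter and $Y''$ is the rest (with $Y''$ being $(A,t')$-compatible for the appropriate $t'$), I would compute
$$Y_{i,j,k,\ell} = \sum_{a,b} Y'_{i,a,k,b} Y''_{a,j,b,\ell} = \sum_{a,b} q^{\|Y'\|} Y'_{i-1,a,k,b} Y''_{a,j,b,\ell},$$
where the second equality uses the base case applied to the single matrix $Y'$ (here it is essential that the first index of $Y'$ is precisely $i$, which is the one being decremented). The remaining sum is then $q^{\|Y'\|}(Y' Y'')_{i-1,j,k,\ell}$ where now we need to decrement the \emph{first} index — but that is exactly $q^{\|Y'\|} Y_{i-1,j,k,\ell}$, and I still need to drop the second index from $j$ to $j-1$. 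Hmm — this requires a slight reorganization: the cleanest route is to induct with $Y'$ being the \emph{last} letter instead, so $Y = \hat{Y} Y'$ with $\hat{Y}$ $(A,t)$-compatible of length $|Y|-1$; then $Y_{i,j,k,\ell} = \sum_{a,b}\hat{Y}_{i,a,k,b} Y'_{a,j,b,\ell}$ and applying the inductive hypothesis to $\hat{Y}$ gives $\hat{Y}_{i,a,k,b} = q^{\|\hat{Y}\|}\hat{Y}_{i-1,a,k,b}$, so $Y_{i,j,k,\ell} = q^{\|\hat{Y}\|}\sum_{a,b}\hat{Y}_{i-1,a,k,b}Y'_{a,j,b,\ell} = q^{\|\hat{Y}\|} Y_{i-1,j,k,\ell}$, which still only handles the $i$-index. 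So neither single induction suffices in one pass; instead I would note that by symmetry of the argument (applying the base-case scaling to the \emph{last} matrix $Y'$ in its \emph{second} index $j$, which one checks identically) one gets $Y_{i,j,k,\ell} = q^{\|Y\|} Y_{i,j-1,k,\ell}$ when decrementing only $j$. Wait — but that is not literally true unless one checks it. The resolution: prove the combined statement directly by induction, observing that in $(MM')_{i,j,k,\ell}$, scaling $M$ via the inductive hypothesis handles the $i$-index and scaling $M'$ (base case in its second index) handles the $j$-index, and the exponents $\|M\| + \|M'\| = \|Y\|$ add up correctly.

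\textbf{Main obstacle.} The technical heart is verifying the base case for $D^{(t)}$ and $E^{(t)}$, i.e. that the recursive definition is genuinely "homogeneous of degree one in $q^i$" — more precisely that decrementing $i$ scales every entry by exactly $q$ — and similarly that decrementing $j$ scales every entry of $D^{(t)}$, $E^{(t)}$, $A$ by exactly $q^{\|\cdot\|}$; this needs a careful simultaneous induction over the pair $(D^{(t)}, E^{(t)})$ since their recursions are mutually referential, tracking that $\|D^{(t)}\| = \|E^{(t)}\| = 1$ and the recursion steps all preserve the scaling. The alternative — and probably cleaner — approach is to invoke the combinatorial interpretation of Lemma \ref{comb-lemma} / Theorem \ref{comb-interpret}: $Y_{i,j,k,\ell}$ is a generating function for ways of adding $|Y|$ columns to a tableau that already has $i$ horizontal $\delta$-strips and $k$ horizontal $\alpha/\gamma$-strips. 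Decreasing $i$ by $1$ deletes one pre-existing $\delta$-strip; the only effect on the weights of the newly-added columns is that each new tile sitting in that deleted $\delta$-strip formerly contributed a $q$ (square) or $q^2$ (rhombus) to the weight, and now contributes nothing — and the number of such new tiles, summed with multiplicity from rhombi counting double, is exactly $\|Y\| = |Y| + |Y|_A$ (one new tile per column, with the $|Y|_A$ diagonal columns contributing a tall rhombus of weight-degree $2$ in that strip, hence an extra $q$ each). I would phrase the argument this way, making the bijection between tableaux counted by $Y_{i,j,k,\ell}$ and those counted by $Y_{i-1,j-1,k,\ell}$ explicit (delete the bottom-most $\delta$-strip, which necessarily passes through all $|Y|$ new columns), and tracking the weight change as the stated power $q^{\|Y\|}$.
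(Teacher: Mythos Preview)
Your final paragraph --- the combinatorial argument via Theorem \ref{comb-interpret}, where the extra horizontal $\delta$-strip contributes one extra empty tile per new column (a square with weight $q$ in each $02$-column and a tall rhombus with weight $q^2$ in each $1$-column, totaling $q^{|Y|-|Y|_A}\cdot q^{2|Y|_A} = q^{\|Y\|}$) --- is exactly the paper's proof.

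Your algebraic approach would also work, but as written it contains a real error that you never quite fix. When you write
\[
\sum_{a,b} Y'_{i,a,k,b}\, Y''_{a,j,b,\ell} \;=\; \sum_{a,b} q^{\|Y'\|}\, Y'_{i-1,a,k,b}\, Y''_{a,j,b,\ell},
\]
this is wrong: the base case for a single matrix says $Y'_{i,a,k,b} = q^{\|Y'\|}\,Y'_{i-1,\,a-1,\,k,\,b}$, decrementing \emph{both} indices, not just $i$. This is precisely what rescues the argument. After substituting, reindex $a \mapsto a+1$ to get
\[
q^{\|Y'\|}\sum_{a,b} Y'_{i-1,a,k,b}\, Y''_{a+1,\,j,\,b,\,\ell},
\]
and now apply the inductive hypothesis to $Y''$ (which again decrements both of its indices) to obtain $Y''_{a+1,j,b,\ell} = q^{\|Y''\|}\,Y''_{a,\,j-1,\,b,\,\ell}$. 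The sum collapses to $q^{\|Y'\|+\|Y''\|}\,Y_{i-1,j-1,k,\ell}$, and $\|Y'\|+\|Y''\| = \|Y\|$. Your later phrasing ``scaling $M$ handles the $i$-index and scaling $M'$ handles the $j$-index'' obscures this: each application handles both of its own indices, and it is the reindexing of the intermediate summation variable that makes them fit together. Once this is straightened out, the algebraic route is a perfectly good alternative to the paper's combinatorial one-liner.
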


\begin{proof}
To prove the lemma we use the combinatorial interpretation 
given by Lemma \ref{comb-interpret}.
Note that both $Y_{i,j,k,\ell}$ and 
$Y_{i-1,j-1,k,\ell}$ enumerate the ways of adding
$|Y|$ new columns to a fixed tableau $T$ so as to increase by 
$j-i$ the number of horizontal $\delta$-strips,
and to increase by $\ell-k$ the number of horizontal 
$\alpha/\gamma$-strips.  The only difference is the 
initial number of horizontal $\delta$-strips.
Since $Y_{i,j,k,\ell}$ has one extra initial horizontal 
$\delta$-strips, this will contribute $|Y|$ extra 
empty tiles which each get a weight of $q$ or $q^2$.
More specifically, there will be $|Y|-|Y|_A$ square tiles
which each get a $q$, and $|Y|_A$ tall rhombi which each get a $q^2$.
Therefore, the difference in weight will be 
$q^{\| Y \|}$.
\end{proof}

\begin{prop}\label{prop:refined}
Let $r$ be a non-negative integer and let 
$X$ be any $A$-compatible word 
in $\{A, D^{(i)}, E^{(j)} \ \vert \ i,j\in \N\}$
containing precisely $r$ A's.
To prove equations (I), (II), (IV), and (V) from Theorem \ref{thm:MA},
it suffices to prove the following:
\begin{enumerate}
\item[(1)] 
$(WX D^{(r)}E^{(r)})_{j,\ell} = q(W X E^{(r)}D^{(r)})_{j,\ell}+\alpha\beta(WX(D^{(r)}+E^{(r)}))_{j,\ell}\\
\phantom{helloaaaaaaaaaaaaaaaaaaaaaaaaaaa} -\gamma\delta q^{\| X \|+1}(WX(D^{(r)}+E^{(r)}))_{j-1,\ell}$
\item[(2)] $\beta(WX D^{(r)})_{j,\ell} = \delta(W X E^{(r)})_{j-1,\ell}+\alpha\beta(WX)_{j,\ell-1}-\gamma\delta q^{\| X \|}(WX)_{j-1,\ell-1}$
\item[(4)] $(WX D^{(r)} A)_{j,\ell} = q(W X A D^{(r+1)})_{j,\ell}+\alpha\beta(WX A)_{j,\ell}-\gamma\delta q^{\| X \|+2}(WX A)_{j-1,\ell}$
\item[(5)] $(WX A E^{(r+1)})_{j,\ell} = q(W X E^{(r)} A)_{j,\ell}+\alpha\beta(WX A)_{j,\ell}-\gamma\delta q^{\| X \|+2}(WX A)_{j-1,\ell}$
\end{enumerate}
\end{prop}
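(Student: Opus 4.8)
\textbf{Proof plan for Proposition \ref{prop:refined}.}
The plan is to show that each of the matrix-product relations (I), (II), (IV), (V) of Theorem \ref{thm:MA} follows from the ``$(WX\cdots)_{j,\ell}$-level'' statements (1), (2), (4), (5) together with the two reduction tools already available: Lemma \ref{ij_reduction} and the definitions of $\langle W|$ and $|V\rangle$. First I would treat (II). By Definition \ref{def:matrices}, $\langle W|$ is supported in the index $(i,k)=(0,0)$, so $\langle W|XD^{(t)}|V\rangle$ only involves entries $(WXD^{(t)})_{j,\ell}$ summed over $j,\ell$ (applying $|V\rangle$ just sums over the ``column'' indices $j,\ell$). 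Hence applying $\langle W|\cdot|V\rangle$ to (2), i.e. summing (2) over all $j$ and $\ell$, gives
$$\beta\langle W|XD^{(r)}|V\rangle=\delta\langle W|XE^{(r)}|V\rangle+\alpha\beta\langle W|X|V\rangle-\gamma\delta q^{\|X\|}\langle W|X|V\rangle,$$
which is exactly (II) with $\lambda_{\|X\|+1}=\alpha\beta-\gamma\delta q^{\|X\|}$; note $t=r=|X|_A$ here because $X$ is $A$-compatible. The relations (IV) and (V) are handled identically: sum (4) (resp.\ (5)) over $j,\ell$ after left-multiplying by $\langle W|$ and right-multiplying by $|V\rangle$, and observe that $\|XA\|=\|X\|+2$, so the prefactor $\gamma\delta q^{\|X\|+2}$ becomes $\gamma\delta q^{\|XA\|}=\gamma\delta q^{\|XAY'\|-|Y'|-|Y'|_A}$; here I would use Lemma \ref{ij_reduction} to pull the trailing $(A,t{+}1)$-compatible word $Y'$ (resp.\ whatever follows) out through the index shift, matching the factor $\lambda_{\|XY'\|+3}$ in (IV), (V). Relation (I) follows the same way from (1), using $\|XD^{(t)}E^{(t)}\|=\|X\|+2$ and again Lemma \ref{ij_reduction} to absorb the word $Y$ appearing after the commutator.

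The one genuinely substantive point is that (I)--(V) of Theorem \ref{thm:MA} are stated for \emph{arbitrary} intervening words $X$ on the left and $Y$ (or $Y'$) on the right, whereas (1)--(5) only assert the identity for words of the form $WX$ with nothing to the right except eventually $|V\rangle$. So I must explain why it is enough to check the $Y$-free version. The key observation is that each of (I)--(V) is an identity between matrices (in the indices $j,\ell$ that get contracted against the suffix $Y$), and the suffix $Y$ only enters through right-multiplication by the \emph{same} matrix $Y$ on both sides. Thus if the matrix identity $\langle W|X(\cdots)=\lambda\,\langle W|X(\cdots)$ holds at the level of row vectors indexed by $(j,\ell)$ — which is what (1)--(5) say after summing out nothing, i.e.\ keeping $(j,\ell)$ free — then right-multiplying by $Y$ and then by $|V\rangle$ preserves it. The only care needed is that the \emph{subscripts} on the matrices in (I)--(V) (the superscripts $(t)$, $(t+1)$) are correctly dictated by the $A$-compatibility of $X$ and $Y$, which is exactly why Definition \ref{def:compatible} is set up as it is: $|X|_A=r$ forces the $D$'s and $E$'s immediately after $X$ to carry superscript $r$, matching the $D^{(r)},E^{(r)}$ in (1), (2), and the $A$ after them bumps the count to $r+1$, matching $D^{(r+1)},E^{(r+1)}$ in (4), (5).

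Accordingly the proof breaks into: (a) record that $\langle W|M|V\rangle=\sum_{j,\ell}(WM)_{j,\ell}$ and more generally $\langle W|MY|V\rangle=\sum_{j,\ell}(WM)_{j,\ell}\cdot(\text{something depending only on }Y)$, so that a row-vector identity in $(WM)_{j,\ell}$ propagates through any suffix; (b) observe $\|XD^{(t)}E^{(t)}\|=\|XE^{(t)}D^{(t)}\|=\|X(D^{(t)}+E^{(t)})\|+1=\|X\|+2$ and similar bookkeeping, so the powers of $q$ in (1)--(5) line up with $\lambda_{\|XY\|+2}$, $\lambda_{\|X\|+1}$, $\lambda_{\|XY'\|+3}$; (c) invoke Lemma \ref{ij_reduction} to account for the index shift $j\mapsto j-1$ in the $\gamma\delta$-terms when a suffix word is reattached. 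I expect step (a)/(b)'s degree-bookkeeping to be entirely routine; \textbf{the main obstacle is simply organizing the suffix-independence argument cleanly} — making precise that relations (I), (IV), (V), which literally contain a word $Y$ or $Y'$ sitting to the \emph{right} of the commutator, reduce to the suffix-free identities (1), (4), (5) — and getting every $q$-power and every index shift to match $\lambda_n=\alpha\beta-\gamma\delta q^{n-1}$ exactly. Everything then reduces to proving (1), (2), (4), (5) themselves, which is deferred to the subsequent subsections.
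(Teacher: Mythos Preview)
Your proposal is correct and follows essentially the same approach as the paper: deduce (II) by summing (2) over $j,\ell$; for (I), (IV), (V), first upgrade (1), (4), (5) to versions with a suffix $Y$ (or $Y'$) attached by expanding the matrix product, applying the refined identity to the $(WX\cdots)_{i,k}$ factor, and using Lemma~\ref{ij_reduction} on the $\gamma\delta$-term to convert the index shift $i\mapsto i-1$ into the extra factor $q^{\|Y\|}$ that combines with $q^{\|X\|+1}$ (resp.\ $q^{\|X\|+2}$) to produce the correct $\lambda_n$; then sum over $j,\ell$. One small wording point: your step~(a), ``$\langle W|MY|V\rangle=\sum_{j,\ell}(WM)_{j,\ell}\cdot(\text{something depending only on }Y)$'', is not literally true---the suffix contracts \emph{intermediate} indices, not the outer ones---but you clearly know this, since your step~(c) is exactly the mechanism the paper uses to handle it.
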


\begin{proof}
We will show that (1) implies (I), (2) implies (II), (4) implies
(IV), and (5) implies (V).

First note that if we take (2) and sum over all non-negative
$j$ and $\ell$, we immediately obtain (II), because 
multiplication on the right by $|V \rangle$ has the effect
of summing over all indices.

To show that (1) implies (I), we first claim that 
if (1) is true, then for any $(A,r)$-compatible word $Y$,
$(WXD^{(r)}E^{(r)}Y)_{j,\ell}$ is equal to
$$q(WXE^{(r)}D^{(r)}Y)_{j,\ell} +
\alpha \beta (WX(D^{(r)}+E^{(r)})Y)_{j,\ell}-\gamma \delta q^{\|X\|+\|Y\|+1}
 (WX(D^{(r)}+E^{(r)})Y)_{j-1,\ell}.$$
To prove the claim, note that: 
 $(WXD^{(r)}E^{(r)} Y)_{j\ell}$ 
is equal to: 
{\small
\begin{eqnarray*}
&& \sum_{i,k} (WXD^{(r)}E^{(r)})_{i,k} Y_{i,j,k,\ell}\\
&=&\sum_{i,k} q(WXE^{(r)}D^{(r)})_{i,k} Y_{i,j,k,\ell} + 
      \alpha \beta \sum_{i,k} (WX(D^{(r)}+E^{(r)}))_{i,k} Y_{i,j,k,\ell}\\
    &&
- \gamma \delta q^{\|X\|+1}\sum_{i,k}
      (WX(D^{(r)}+E^{(r)}))_{i-1,k} Y_{i,j,k,l} \\
&=&q (WXE^{(r)}D^{(r)}Y)_{j,\ell} + \alpha \beta(WX(D^{(r)}+E^{(r)})Y)_{j\ell} -
      \gamma \delta q^{\|X\|+\|Y\|+1} (WX(D^{(r)}+E^{(r)})Y)_{j-1,\ell}.
\end{eqnarray*}
}
To deduce the final equality above, we applied Lemma \ref{ij_reduction}
to the last term.

Now note that if we take the equation of the claim, and sum over all
$j$ and $\ell$, then we get precisely (I).

The proof that (4) implies (IV) and the proof that (5) implies
(V) are completely analogous.  For example, to prove the former,
we first show that (4) implies that for any 
$(A,r+1)$-compatible word $Y'$, we have that 
$$(WXD^{(r)}AY')_{j,\ell} = 
q (WXAD^{(r+1)}Y')_{j,\ell} + 
\alpha \beta (WXAY')_{j,\ell} 
- \gamma \delta q^{\|X\|+\|Y\|+2} (WXAY')_{j-1,\ell}.$$
We then sum this equation over all $j$ and $\ell$ to obtain 
(IV).
\end{proof}

To complete the proof of Theorem \ref{thm:comb-ansatz}, 
we need to prove that  equations (1), (2), (4), and (5)
from Proposition \ref{prop:refined} hold.
In Lemmas \ref{lem:24-5} and \ref{lem:2-1} below, we will 
show that (2) and (4) imply (5), and also that (2) implies (1).
The rest of the proof will be devoted to proving 
equations (2) and (4).

\begin{lem}\label{lem:24-5}
If equations (2) and (4) 
from Proposition \ref{prop:refined} hold, then so 
does equation (5).
\end{lem}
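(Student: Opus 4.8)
The plan is to derive equation (5) from equations (2) and (4) by a purely algebraic manipulation, following the same pattern by which one typically reduces $AE$-type relations to $DA$-type relations in Matrix Ansatz arguments. First I would write out both sides of (5) as sums over the intermediate indices $(i,k)$, using the definition of matrix multiplication: for the left-hand side, $(WXAE^{(r+1)})_{j,\ell} = \sum_{i,k}(WXA)_{i,k}\,E^{(r+1)}_{i,j,k,\ell}$, and similarly expand $(WXE^{(r)}A)_{j,\ell} = \sum_{i,k}(WXE^{(r)})_{i,k}\,A_{i,j,k,\ell}$. The idea is to use equation (2), applied with the word $XA$ in place of $X$ (which is legitimate since $XA$ is again $A$-compatible, with $r+1$ A's), to rewrite $(WXAD^{(r+1)})_{j,\ell}$ appearing in (4) in terms of $(WXAE^{(r)})$, $(WXA)$, and lower-index terms; but actually the cleaner route is to observe that (5) for the pair $(X, r+1)$ should be comparable to (2)/(4) after transposing the roles of $D$ and $E$. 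The defining recurrences for $D^{(t)}$ and $E^{(t)}$ in Definition \ref{def:matrices} are manifestly symmetric under swapping $D\leftrightarrow E$, $\alpha\leftrightarrow\delta$-ish roles together with the boundary weights; I would exploit exactly that symmetry at the level of the recurrences, which is how Lemma \ref{comb-lemma} treats $E^{(t)}$ "analogously" to $D^{(t)}$.

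Concretely, the key steps in order are: (a) expand (5) into index sums; (b) apply equation (4) (with $X$ unchanged) to substitute for the quantity $(WXAD^{(r+1)})$ wherever it arises, or rather recognize that (4) gives a relation among $WXD^{(r)}A$, $WXAD^{(r+1)}$, and $WXA$; (c) apply equation (2) with the word $X$ replaced by $X$ itself to relate $\beta(WXD^{(r)})$ to $\delta(WXE^{(r)})$ plus $\alpha\beta(WX)$ minus $\gamma\delta q^{\|X\|}(WX)$ with shifted indices, and feed this into the expression obtained from (b); (d) collect terms, using Lemma \ref{ij_reduction} to handle the index shifts $Y_{i,j,k,\ell} = q^{\|Y\|}Y_{i-1,j-1,k,\ell}$ that will appear when the $\gamma\delta$-terms get pushed through $A$; (e) match coefficients of $q$, $\alpha\beta$, and $\gamma\delta q^{\|X\|+2}$ on both sides. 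The bookkeeping with the powers of $q$ — making sure $\|X\|$, $\|X\|+1$, $\|X\|+2$ appear in exactly the right places after passing the index shift through the single letter $A$ (which contributes $2$ to $\|\cdot\|$ since $|A|_A$ counts it) — is where I expect the main obstacle to lie; the combinatorics of $\|XA\| = \|X\|+2$ must be tracked carefully.

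The hard part will therefore not be any deep idea but rather the careful alignment of the $q$-exponents and the index shifts: one must verify that applying (2) and (4) in the right order produces precisely $\alpha\beta(WXA)_{j,\ell} - \gamma\delta q^{\|X\|+2}(WXA)_{j-1,\ell}$ and not some off-by-one variant. I would organize the computation so that I first isolate the "$\alpha\beta$ part" and the "$q$ part" (which should combine to give $q(WXE^{(r)}A)_{j,\ell} + \alpha\beta(WXA)_{j,\ell}$ straightforwardly from linearity and the recurrences), and then treat the "$\gamma\delta$ part" separately, since that is the only place Lemma \ref{ij_reduction} enters and the only place an error in the exponent can creep in. If the symmetry of the $D^{(t)}/E^{(t)}$ recurrences is genuinely as clean as Definition \ref{def:matrices} suggests, an alternative and perhaps shorter route is to prove (5) by literally transposing the derivation that will be used for (4), but I would keep the (2)+(4)$\Rightarrow$(5) deduction as the primary argument since that is what the lemma statement asserts.
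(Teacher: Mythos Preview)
Your primary approach matches the paper's: start from $q(WXE^{(r)}A)_{j,\ell}$, expand as a sum over $(i,k)$, apply (2) to replace $(WXE^{(r)})_{i,k}$ by $(WXD^{(r)})_{i+1,k}$ and $(WX)$-terms, shift indices via Lemma~\ref{ij_reduction}, then apply (4) to the resulting $(WXD^{(r)}A)$-term and finally (2) with $XA$ in place of $X$ to the $(WXAD^{(r+1)})$-term; after cancellations one lands exactly on (5). One ingredient you do not mention but which the paper uses and you will need is the defining recurrence for $A_{i,j,k,\ell}$ from Definition~\ref{def:matrices}: after the $ij$-shift you must expand $A_{i+1,j+1,k,\ell}$ (and $A_{i,j,k,\ell}$) by that recurrence so that the resulting pieces reassemble into the matrix products $(WXD^{(r)}A)$, $(WXA)$ with appropriate index shifts. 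Without that step the sums will not close up.

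Your alternative ``symmetry'' route should be dropped: the recurrences for $D^{(t)}$ and $E^{(t)}$ in Definition~\ref{def:matrices} are \emph{not} related by a clean involution (note the $j-1$ versus $j$ in the $(D+E)$-term, the extra factor of $q$ in the $E$-recurrence, and the different base cases $\alpha q^i$ versus $\gamma q^{2t+i}$), so there is no transposition of the argument for (4) that directly yields (5).
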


\begin{proof}
We use the definition of matrix multiplication to write
\[
q(WX E^{(r)} A)_{j,\ell} = q \sum_{i,k} (WX E^{(r)})_{i,k} A_{i,j,k,\ell}.
\]
Using (2) to expand 
$(WX E^{(r)})_{i,k}$, we get the following expression for the 
right-hand side:
\begin{eqnarray*}
&& q \sum_{i,k} {\left(\frac{\beta}{\delta}(W X D^{(r)})_{i+1,k} - \frac{\alpha\beta}{\delta}(W X)_{i+1,k-1} + \gamma q^{\| X \|}(W X)_{i,k-1} \right)} A_{i,j,k,\ell}
\\
&=& q \sum_{i,k} \left( \frac{\beta}{\delta q^2}(W X D^{(r)})_{i+1,k}A_{i+1,j+1,k,\ell} - \frac{\alpha\beta}{\delta q^2}(W X)_{i+1,k-1}A_{i+1,j+1,k,\ell} + \gamma q^{\| X \|}(W X)_{i,k-1}A_{i,j,k,\ell}\right).
\end{eqnarray*}
Note that to get the second line above from the first, we used
Lemma \ref{ij_reduction} to replace
$A_{i,j,k,\ell}$ by $q^{-2} A_{i+1,j+1,k,\ell}$ in the first two terms.  
We now 
use the recurrence for $A_{i,j,k,\ell}$ (see Definition \ref{def:matrices})
on the second and third terms, 
obtaining the following:
\small{\begin{multline*}
q \sum_{i,k} \Big( \frac{\beta}{\delta q^2}(W X D^{(r)})_{i+1,k}A_{i+1,j+1,k,\ell} - \frac{\alpha\beta}{\delta q^2}(W X)_{i+1,k-1}({\beta A_{i+1,j+1,k-1,\ell}}+{\delta q A_{i+1,j,k-1,\ell}} + {q A_{i+1,j+1,k-1,\ell-1}}) \Big. \\
\Big. + \gamma q^{\| X \|}(W X)_{i,k-1}(\beta A_{i,j,k-1,\ell}+\delta q A_{i,j-1,k-1,\ell} + q A_{i,j,k-1,\ell-1}) \Big).
\end{multline*}}
Using the definition of matrix multiplication we can 
get rid of the sums, 
obtaining:
\begin{multline*}
\frac{\beta}{\delta q}(W X D^{(r)} A)_{j+1,\ell} + \beta \gamma q^{\| X \|+1}(W X A)_{j,\ell}+\gamma\delta q^{\| X \|+2}(W X A)_{j-1,\ell} + \gamma q^{\| X \|+2}(W X A)_{j,\ell-1}\\
 - \frac{\alpha\beta^2}{\delta q}(W X A)_{j+1,\ell}-\alpha\beta(W X A)_{j,\ell} - \frac{\alpha\beta}{\delta}(W X A)_{j+1,\ell-1}.
\end{multline*}
Now we replace the $(WXD^{(r)}A)_{j+1,\ell}$ in the first term 
above using (4), obtaining:
\begin{multline*}
\frac{\beta}{\delta q}{\left( q(W X A D^{(r+1)})_{j+1,\ell}+\alpha\beta(WX A)_{j+1,\ell}-\gamma\delta q^{\| X \|+2}(WX A)_{j,\ell} \right)} + \beta \gamma q^{\| X \|+1}(W X A)_{j,\ell}\\
+\gamma\delta q^{\| X \|+2}(W X A)_{j-1,\ell} + \gamma q^{\| X \|+2}(W X A)_{j,\ell-1}
 - \frac{\alpha\beta^2}{\delta q}(W X A)_{j+1,\ell}-\alpha\beta(W X A)_{j,\ell} - \frac{\alpha\beta}{\delta}(W X A)_{j+1,\ell-1}.
\end{multline*}
The second and third terms above cancel with the seventh and fourth terms,
and we apply (2) to the $(WXAD^{(r+1)})_{j+1,\ell}$ in the first
term, to obtain:
\begin{multline*}
\frac{1}{\delta}{\left( \delta(W X A E^{(r+1)})_{j,\ell}+\alpha\beta(WX A)_{j+1,\ell-1}-\gamma\delta q^{\| X\|+2}(WX A)_{j,\ell-1} \right)} \\
+\gamma\delta q^{\| X \|+2}(W X A)_{j-1,\ell} + \gamma q^{\| X \|+2}(W X A)_{j,\ell-1}
-\alpha\beta(W X A)_{j,\ell} - \frac{\alpha\beta}{\delta}(W X A)_{j+1,\ell-1}.
\end{multline*}
After cancellation, we obtain, as desired, that
\[
q(WX E^{(r)} A)_{j,\ell} = (W X A E^{(r+1)})_{j,\ell} -\alpha\beta(W X A)_{j,\ell} +\gamma\delta q^{\| X \|+2}(W X A)_{j-1,\ell}. 
\]
\end{proof}

\begin{lem}\label{lem:2-1}
If equation (2) from Proposition \ref{prop:refined} holds, then so 
does equation (1).
\end{lem}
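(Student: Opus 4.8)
The plan is to derive equation (1) from equation (2) by a direct algebraic manipulation in the matrix algebra, using the recurrences for $D^{(r)}$ and $E^{(r)}$ from Definition \ref{def:matrices} together with Lemma \ref{ij_reduction}. First I would write $(WXD^{(r)}E^{(r)})_{j,\ell} = \sum_{i,k}(WXD^{(r)})_{i,k} E^{(r)}_{i,j,k,\ell}$ and expand $(WXD^{(r)})_{i,k}$ using equation (2), turning it into a combination of $(WXE^{(r)})_{i-1,k}$, $(WX)_{i,k-1}$, and $(WX)_{i-1,k-1}$. Then I would need to re-index: Lemma \ref{ij_reduction} converts the shifts in the first index of $E^{(r)}$ into powers of $q$ (namely a factor $q^{\|E^{(r)}\|}=q^{2}$ or $q^{\|X\|}$ as appropriate), so that all the $E^{(r)}$-factors can be lined up with the correct indices to re-form matrix products such as $(WXE^{(r)}D^{(r)})_{j,\ell}$.

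Next I would use the defining recurrence $E^{(r)}_{i,j,k,\ell} = \beta(D^{(r)}+E^{(r)})_{i,j,k-1,\ell} + q E^{(r)}_{i,j,k-1,\ell-1}$ (valid away from the boundary cases) to push the $k$-index down by one, so that the right-hand side starts to look like $(WXE^{(r)}D^{(r)})$, $(WX(D^{(r)}+E^{(r)}))$, and their $q$-shifts. The boundary cases of the recurrence (where $k=0$, giving the explicit $\alpha q^{i}$ and the $\beta q^{i}(q^{r}+(\alpha+\gamma q^{r})[r]_q)$ terms) will have to be handled separately; I expect these to contribute exactly the $\alpha\beta(WX(D^{(r)}+E^{(r)}))_{j,\ell}$ and $-\gamma\delta q^{\|X\|+1}(WX(D^{(r)}+E^{(r)}))_{j-1,\ell}$ terms once one invokes equation (2) once more on the $WXD^{(r)}$ appearing there. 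Essentially this is a bookkeeping argument identical in spirit to the proof of Lemma \ref{lem:24-5}: substitute (2), re-index with Lemma \ref{ij_reduction}, apply the matrix recurrences, re-collapse the sums, apply (2) a second time, and watch the terms cancel.

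Alternatively, and perhaps more cleanly, I would prove (1) combinatorially using Theorem \ref{comb-interpret}, interpreting both sides as generating functions for rhombic staircase tableaux with a prescribed number of horizontal $\delta$-strips and $\alpha/\gamma$-strips. The identity $(WXD^{(r)}E^{(r)})_{j,\ell} - q(WXE^{(r)}D^{(r)})_{j,\ell} = \alpha\beta(WX(D^{(r)}+E^{(r)}))_{j,\ell} - \gamma\delta q^{\|X\|+1}(WX(D^{(r)}+E^{(r)}))_{j-1,\ell}$ would then be established by exhibiting a sign-reversing, weight-preserving involution on the tableaux where the two rightmost columns form a $D$-column adjacent to an $E$-column (in either order), with the fixed points of the involution accounting for the two terms on the right; this mirrors the combinatorial proof of (III) given in Figure \ref{fig:comb}. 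Since the statement of the lemma only asserts ``(2) implies (1),'' I will follow the first (algebraic) route, as it lets me quote (2) directly rather than re-deriving the tableaux interpretation.

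The main obstacle I anticipate is the careful tracking of the exponents of $q$ through the two applications of Lemma \ref{ij_reduction} and the various re-indexings: one must be sure that the shift by one in an $i$-index of $X$ versus of $D^{(r)}$ versus of $E^{(r)}$ produces the right power ($q^{\|X\|}$, $q^{\|X\|+1}$, or $q^{\|X\|+\|Y\|}$-type factors), and that the $\gamma\delta$-term ends up with exactly $q^{\|X\|+1}$ and index $j-1$ rather than $j$. The boundary terms of the $E^{(r)}$-recurrence, which encode the $\gamma q^{2r+i}$ and $\beta q^{i}(\ldots)$ data, are the other delicate point, since they are precisely what produces the $\alpha\beta$ and $\gamma\delta q^{\|X\|+1}$ constants on the right-hand side, and getting the constant $\lambda_{\|XY\|+2}=\alpha\beta-\gamma\delta q^{\|XY\|+1}$ of relation (I) to match requires these to come out exactly right. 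Everything else is routine rearrangement of finite sums.
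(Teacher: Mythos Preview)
Your algebraic plan is essentially the paper's own proof: expand $(WXD^{(r)})_{i,k}$ via (2), use Lemma~\ref{ij_reduction} to shift indices on $E^{(r)}$, apply the defining recurrence for $E^{(r)}$, recollapse sums, apply (2) once more (to $(WXE^{(r)}E^{(r)})_{j-1,\ell}$, not to a $WXD^{(r)}$), and cancel. One small correction: the $k=0$ boundary terms you anticipate do not need separate treatment---the factors $(WX)_{i,k-1}$ and $(WX)_{i-1,k-1}$ vanish at $k=0$, and the $\alpha\beta$ and $\gamma\delta q^{\|X\|+1}$ contributions come directly from the $\beta(D^{(r)}+E^{(r)})$ piece of the $E^{(r)}$-recurrence, not from the base cases.
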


\begin{proof}
%
We use that $(WX D^{(r)} E^{(r)})_{j,\ell} = \sum_{i,k}(W X D^{(r)})_{i,k}E^{(r)}_{i,j,k,\ell}$, and apply (2) to $(W X D^{(r)})_{i,k}$, to obtain:
\[
(WX D^{(r)} E^{(r)})_{j,\ell} = \sum_{i,k}{\frac{1}{\beta}\left( \delta(W X E^{(r)})_{i-1,k}+\alpha\beta(WX)_{i,k-1}-\gamma\delta q^{\| X \|}(WX)_{i-1,k-1} \right)}E^{(r)}_{i,j,k,\ell}.
\]
In the first and third term on the right-hand side above,
we use Lemma \ref{ij_reduction} to replace $E_{i,j,k,\ell}^{(r)}$ by 
$q E_{i-1,j-1,k,\ell}^{(r)}$, and in the second term we apply the 
defining recurrence for $E$, obtaining:
\begin{multline*}
\sum_{i,k}\frac{\delta}{\beta}(W X E^{(r)})_{i-1,k}({q E^{(r)}_{i-1,j-1,k,\ell}}) + \sum_{i,k}\alpha (WX)_{i,k-1}{\left( \beta(D^{(r)}+E^{(r)})_{i,j,k-1,\ell}+q E^{(r)}_{i,j,k-1,\ell-1} \right)}\\
 - \sum_{i,k}\frac{\gamma\delta q^{\| X \|}}{\beta} (WX)_{i-1,k-1} 
({q E^{(r)}_{i-1,j-1,k,\ell}}).
\end{multline*}
We interpret the first two sums above as matrix multiplication,
and apply the defining recurrence for $E$ to the third term, obtaining:
\begin{multline*}
\frac{\delta q}{\beta}(W X E^{(r)} E^{(r)})_{j-1,\ell} + \alpha\beta (WX (D^{(r)}+E^{(r)}))_{j,\ell} +\alpha q (WX E^{(r)})_{j,\ell-1}\\
 - \sum_{i,k}\frac{\gamma\delta q^{\| X \|+1}}{\beta} (WX)_{i-1,k-1} {\left( \beta(D^{(r)}+E^{(r)})_{i-1,j-1,k-1,\ell}+q E^{(r)}_{i-1,j-1,k-1,\ell-1} \right)}.
\end{multline*}
We apply (2) to the $(WXE^{(r)} E^{(r)})_{j-1,\ell}$ in the first term,
and interpret the final sum as matrix multiplication, obtaining:
{\small
\begin{multline*}
\frac{\delta q}{\beta}{\frac{1}{\delta}\left( \beta(WX E^{(r)} D^{(r)})_{j,\ell} -\alpha\beta(WX E^{(r)})_{j,\ell-1}+\gamma\delta q^{\|X\|+1}(WX E^{(r)})_{j-1,\ell-1} \right)} + \alpha\beta (WX (D^{(r)}+E^{(r)}))_{j,\ell}\\
+\alpha q (WX E^{(r)})_{j,\ell-1}
 - \gamma\delta q^{\| X \|+1} (WX (D^{(r)}+E^{(r)}))_{j-1,\ell} -\frac{\gamma\delta q^{\| X \|+2}}{\beta} (WX E^{(r)})_{j-1,\ell-1}.
\end{multline*}
}

After cancellation, we obtain the desired quantity
$$q(WX E^{(r)} D^{(r)})_{j,\ell} + \alpha\beta (WX (D^{(r)}+E^{(r)}))_{j,\ell} - \gamma\delta q^{\| X \|+1} (WX (D^{(r)}+E^{(r)}))_{j-1,\ell}.
$$
\end{proof}

\subsection{Reducing (2) and (4) from Proposition \ref{prop:refined}
to Theorem \ref{THM24}}\label{sec:proof2}

In this section we show that equations (2) and (4) from
Proposition \ref{prop:refined}  are a consequence of  
Theorem \ref{THM24} below.
We then give a brief outline of the proof of Theorem \ref{THM24}.

\begin{defn} A vertical strip of \emph{type $F^{(t)}$} is a vertical strip with exactly $t$ short rhombi and at least two square tiles, where the bottom-most square tile contains a $\delta$, and there is a $\beta$ as the next Greek letter above the $\delta$ in the strip (necessarily in a square tile), 
as in 
the left side of Figure \ref{Ft}. 
Similarly, a vertical strip of \emph{type $G^{(t)}$} has a $\beta$ in its bottom-most square tile, and a $\delta$ as the next Greek letter above the $\beta$, 
as in the right side of Figure \ref{Ft}.
\end{defn}

\begin{defn}\label{def:FG}
Let $F^{(t)}_{i,j,k,\ell}$ be 
the weight generating function for all ways of adding a  new vertical strip 
of type $F^{(t)}$ to the left of a rhombic staircase tableau with
precisely 
$i$ horizontal $\delta$-strips and $k$ horizontal $\alpha/\gamma$-strips,
so as to obtain a new tableau with  $j$ horizontal $\delta$-strips 
and $\ell$ horizontal $\alpha/\gamma$-strips.
\end{defn} 

\begin{figure}[h]
\centering
\includegraphics[width=0.2\textwidth]{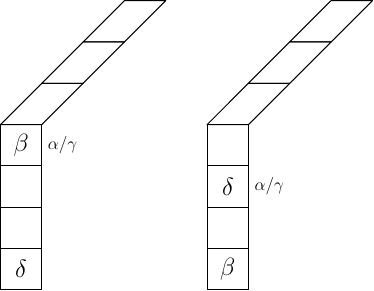}
\caption{Vertical strips of type  $F^{(3)}$ and $G^{(3)}$.}
\label{Ft}
\end{figure}

\begin{remark}
Since this section contains many long equations, we will 
try to simplify our notation somewhat.  
In particular, when $X$ and $Y$ are matrices, we will often write
$XY_{i,j,k,\ell}$ instead of $(XY)_{i,j,k,\ell}$, and similarly
$XYZ_{i,j,k,\ell}$ instead of $(XYZ)_{i,j,k,\ell}$.
\end{remark}

\begin{thm} \label{THM24}
Let $X$ be $(A,t)$-compatible, and set
 $r = |X|_A$.  Then we have that 
\small{
\begin{align}
\beta XD^{(r+t)}_{0,j,k,\ell}-\delta XE^{(r+t)}_{0,j-1,k,\ell}&=\alpha\beta X_{0,j,k,\ell-1}-\gamma\delta q^{2t+k+\| X \|}X_{0,j-1,k,\ell-1}+(1-q)F^{(t)} X_{0,j,k,\ell-1}, 
\label{EQ2} \\
 XD^{(r+t)}A_{0,j,k,\ell}-qXAD^{(r+t+1)}_{0,j,k,\ell}&=\alpha\beta XA_{0,j,k,\ell}-\gamma\delta q^{2t+k+2+\| X \|}XA_{0,j-1,k,\ell}+(1-q)F^{(t)} XA_{0,j,k,\ell}.
\label{EQ4}
\end{align}
}
\end{thm}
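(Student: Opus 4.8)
\textbf{Proof proposal for Theorem \ref{THM24}.}

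The plan is to prove both identities \eqref{EQ2} and \eqref{EQ4} by the same combinatorial strategy that was used in the earlier lemmas: interpret each matrix entry via Theorem \ref{comb-interpret} and Lemma \ref{comb-lemma} as a weight generating function for rhombic staircase tableaux (with $u=1$), and then set up an explicit weight-preserving correspondence between the fillings counted by the two sides. Since $X$ is $(A,t)$-compatible and $r = |X|_A$, the matrix $X$ records adding $|X|$ columns whose types are dictated by $\tau(X)$; appending $D^{(r+t)}$, $E^{(r+t)}$, $A$, etc.\ on the right corresponds to adding one further column on the far left. I would first reduce to the case $i=0$ — which is exactly what both equations assert (all entries have first index $0$) — and note that this is harmless because Lemma \ref{ij_reduction} lets one recover the general $i$ from $i=0$; the powers of $q$ appearing in the statement ($q^{2t+k+\|X\|}$ and $q^{2t+k+2+\|X\|}$) are precisely the bookkeeping of those $\|X\|$-type shifts together with the contribution of the $t$ short rhombi (giving $q^{2t}$) and the $k$ horizontal $\alpha/\gamma$-strips already present (giving $q^{k}$).

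For \eqref{EQ2}, I would analyze the new leftmost $02$-column $C$ according to what sits in its bottom square and what the ``next Greek letter'' above it is. The term $\beta X D^{(r+t)}_{0,j,k,\ell}$ enumerates tableaux where $C$ carries an $\alpha$ or $\delta$ at the bottom, with an extra $\beta$ placed; $\delta X E^{(r+t)}_{0,j-1,k,\ell}$ enumerates the analogous $\beta$-or-$\gamma$-bottom case. Most configurations of $C$ pair off between the two terms and cancel, exactly as the second and third terms cancelled in the proof of (III). The configurations that survive are: (a) $C$ has an $\alpha$ at the bottom (contributing $\alpha\beta X_{0,j,k,\ell-1}$, since an $\alpha$ forces all tiles above to be empty and creates one new $\alpha/\gamma$-strip, hence the shift $\ell \to \ell-1$); (b) $C$ has a $\delta$ at the bottom and a $\gamma$ somewhere above in the short-rhombi region (this is where the $-\gamma\delta q^{2t+k+\|X\|}$ term comes from, the $q$-power counting all the empty tiles forced to carry $q$'s to the left of the $\delta$ in its horizontal strip and the empty short rhombi/squares below the $\gamma$); and (c) the genuinely new phenomenon for the rhombic (non-staircase) case — vertical strips of type $F^{(t)}$, i.e.\ a $\delta$ at the bottom with a $\beta$ as the next Greek letter above it, which is neither an all-empty-above $\delta$ nor a $\gamma$-above $\delta$, and which contributes with the factor $(1-q)$ because flipping such a strip to type $G^{(t)}$ and comparing weights produces exactly a factor of $1-q$. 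Assembling (a)--(c) and invoking the definition of $F^{(t)}$ (Definition \ref{def:FG}) as a transfer matrix yields \eqref{EQ2}.

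For \eqref{EQ4}, the structure is the same but with an extra $1$-column $A$ glued on immediately to the right of the $02$-column under discussion, so one is really building the pair of columns (new $02$-column)(old-style $1$-column). The shift $D^{(r+t)} \to D^{(r+t+1)}$ on the right-hand term reflects that after inserting $C$ there is one more $1$-column preceding whatever comes next; the factor $q$ in front of $XAD^{(r+t+1)}$ and the jump in the exponent from $2t+k$ to $2t+k+2$ come from the two extra empty tiles (one square, one tall rhombus) that the swap of the $02$-column past the $1$-column forces to acquire $q$'s, just as in the commutation relations (IV)--(V) of the Matrix Ansatz. I would run the same four-way case split on the bottom square of $C$ and the next Greek letter above, obtaining the all-empty-above-$\alpha$ term $\alpha\beta XA_{0,j,k,\ell}$, the $\gamma$-above-$\delta$ term $-\gamma\delta q^{2t+k+2+\|X\|}XA_{0,j-1,k,\ell}$, and the $F^{(t)}$ term $(1-q)F^{(t)}XA_{0,j,k,\ell}$, with everything else cancelling in pairs.

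The main obstacle I anticipate is \emph{not} the case analysis itself but verifying that the weight-preserving bijections respect the $u \leftrightarrow q$ filling rules of Definition \ref{defn:weight} when a $02$-column is pushed past a $1$-column (for \eqref{EQ4}) or when one compares an $F^{(t)}$-strip against a $G^{(t)}$-strip (the source of the $1-q$ factor). Concretely, one must check tile-by-tile, using Figures \ref{mnemonic1} and \ref{mnemonic2}, that the short rhombi and tall rhombi in the affected columns pick up exactly the claimed powers of $q$ — this is where an off-by-one in the exponent would hide — and that the ``sees a $\beta$/$\delta$ to its right'' versus ``sees an $\alpha/\gamma$ to its right'' dichotomy is preserved under the move. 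This is the long and technical heart of Section \ref{sec:proof1}, and I would expect to organize it as a sequence of sub-lemmas isolating (i) the cancellation of matched configurations, (ii) the $\alpha$-at-bottom contribution, (iii) the $\gamma$-above-$\delta$ contribution and its $q$-exponent, and (iv) the $F^{(t)}$-versus-$G^{(t)}$ comparison yielding $1-q$, before combining them.
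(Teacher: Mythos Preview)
Your bijective approach, analyzing the leftmost $02$-column $C$ (the one encoded by $D^{(r+t)}$ or $E^{(r+t)}$), does essentially reproduce the paper's argument for the base case $|X|=0$: there the single column is simultaneously the ``$D/E$'' column and the ``$F$'' column, and the $(1-q)$ factor drops out of the $F=G$ identity (Lemmas \ref{comb-lemmaF} and \ref{lem:FG}) exactly as you describe. But for general $X$ there is a genuine gap. In the statement of Theorem \ref{THM24} the term is $(1-q)\,F^{(t)}X_{0,j,k,\ell-1}$, with $F^{(t)}$ as the \emph{first} factor of the word --- i.e.\ the $F$-type column sits at the \emph{rightmost} position among the new columns, adjacent to the $t$ pre-existing $1$-columns and the $k$ pre-existing $\alpha/\gamma$-strips. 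Your case split on the leftmost column $C$ would instead produce a term of the shape $(1-q)\,(XF^{(r+t)})_{0,j,k,\ell-1}$, with the $F$-column on the \emph{left}. These are different matrix products, and their discrepancy is precisely what the commutator identities of Proposition \ref{EFDF} compute. The paper therefore does \emph{not} argue bijectively on the leftmost column for general $X$; it proceeds by induction on $|X|$, peeling off the \emph{first} letter ($X=D^{(t)}Y$, $E^{(t)}Y$, or $AY$) and invoking Proposition \ref{EFDF} to commute $F^{(t)}$ past that letter. Your proposal contains no analogue of this step, so it cannot close the argument beyond $|X|=0$.

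A secondary point: you describe the $F^{(t)}$ term as ``the genuinely new phenomenon for the rhombic (non-staircase) case.'' That is not accurate. The $F^{(t)}$ correction is already present when $t=r=0$ (the ordinary staircase case); what makes it vanish is $k=0$, i.e.\ the absence of pre-existing horizontal $\alpha/\gamma$-strips (cf.\ Lemma \ref{lem:enough}). The superscript $(t)$ tracks the number of short rhombi on top of the $F$-column, but the mechanism --- a $\delta$ at the bottom with a $\beta$ as the next Greek letter above --- lives entirely in the square tiles and already appears in \cite{CW-Duke1}. Also, in \eqref{EQ4} the $A$ in $XD^{(r+t)}A$ is the \emph{leftmost} column (it follows $D^{(r+t)}$ in the word), not ``immediately to the right'' of the $02$-column as you wrote; this orientation matters when you try to track the $q$-weights.
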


\begin{lemma}\label{lem:enough}
If Theorem \ref{THM24} holds, then so do 
equations (2) and (4) from
Proposition \ref{prop:refined}.  
\end{lemma}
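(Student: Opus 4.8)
The plan is to obtain equations (2) and (4) of Proposition~\ref{prop:refined} by specializing Theorem~\ref{THM24} to $t=0$ and to the matrix row labelled $(i,k)=(0,0)$, and then checking that the two leftover terms involving $F^{(0)}$ vanish.

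First I would record the elementary fact that, since $\langle W|=(W_{ik})$ is supported on the single index $(i,k)=(0,0)$ with $W_{00}=1$, we have $(WM)_{j,\ell}=M_{0,j,0,\ell}$ for any matrix $M$ with the relevant index structure; so it suffices to prove (2) and (4) at the level of the $(0,j,0,\ell)$ entries. Because ``$(A,0)$-compatible'' means exactly ``$A$-compatible'' (Definition~\ref{def:compatible}), the word $X$ of Proposition~\ref{prop:refined} is a legitimate choice of $X$ in Theorem~\ref{THM24} with $t=0$ and $r=|X|_A$, and then $D^{(r+t)}=D^{(r)}$, $D^{(r+t+1)}=D^{(r+1)}$. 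Specializing \eqref{EQ2} and \eqref{EQ4} to $t=0$ and then to $k=0$ (which turns $q^{2t+k+\|X\|}$ into $q^{\|X\|}$ and $q^{2t+k+2+\|X\|}$ into $q^{\|X\|+2}$), and rewriting the $(0,\cdot,0,\cdot)$ entries via the remark above, I obtain
\begin{align*}
\beta (WXD^{(r)})_{j,\ell}-\delta (WXE^{(r)})_{j-1,\ell}&=\alpha\beta (WX)_{j,\ell-1}-\gamma\delta q^{\|X\|}(WX)_{j-1,\ell-1}+(1-q)(F^{(0)}X)_{0,j,0,\ell-1},\\
(WXD^{(r)}A)_{j,\ell}-q(WXAD^{(r+1)})_{j,\ell}&=\alpha\beta (WXA)_{j,\ell}-\gamma\delta q^{\|X\|+2}(WXA)_{j-1,\ell}+(1-q)(F^{(0)}XA)_{0,j,0,\ell}.
\end{align*}
These are precisely (2) and (4) of Proposition~\ref{prop:refined}, except for the final summand on each line. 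Hence the lemma reduces to the claim that $(F^{(0)}X)_{0,j,0,\ell-1}=0$ and $(F^{(0)}XA)_{0,j,0,\ell}=0$ for all $j,\ell$.

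By the definition of matrix multiplication $(F^{(0)}M)_{0,j,0,\ell}=\sum_{a,b}F^{(0)}_{0,a,0,b}M_{a,j,b,\ell}$, so it is enough to show $F^{(0)}_{0,a,0,b}=0$ for all $a,b$; in words, that a vertical strip of type $F^{(0)}$ can never be attached to the left of a rhombic staircase tableau $T$ having $0$ horizontal $\delta$-strips and $0$ horizontal $\alpha/\gamma$-strips. This is the only non-formal point, and here is the argument I would give. The rightmost tile of each horizontal strip of $T$ is the (corner) bottom square of its vertical strip, which is non-empty, so every horizontal strip of $T$ carries a Greek letter and thus has a well-defined type; by hypothesis all of these types are $\beta$. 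Now a type-$F^{(0)}$ strip has no short rhombi and at least two square tiles, with a $\delta$ in its bottom square and a $\beta$ in some strictly higher square (Definition~\ref{def:FG}). When such a strip is attached on the left, its bottom square opens a brand-new horizontal strip, while each higher square of the strip extends one of the existing (hence $\beta$-) horizontal strips of $T$ to its left. In particular the square holding the $\beta$ sits to the left of the $\beta$ already present in that horizontal strip of $T$, contradicting the requirement that every tile to the left of a $\beta$ in a horizontal strip be empty. Therefore no such attachment exists, $F^{(0)}_{0,a,0,b}=0$, and the lemma follows.

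The main obstacle is exactly this last combinatorial claim: one has to track carefully which horizontal strips of the enlarged tableau are extensions of strips of $T$ and which single strip is new, and to use the left--right orientation that places a newly attached column to the left of everything with which it shares a horizontal strip. Once that is set up, the contradiction with the $\beta/\delta$ emptiness rule is immediate, and everything else is just bookkeeping of the specializations $t=0$ and $k=0$.
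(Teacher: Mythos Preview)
Your proof is correct and follows essentially the same approach as the paper: specialize Theorem~\ref{THM24} to $t=0$, $k=0$, and show that the residual $F^{(0)}$-terms vanish. Your combinatorial argument for the vanishing (that with $i=k=0$ every existing horizontal strip is a $\beta$-strip, so the $\beta$ required in an $F$-strip would sit to the left of an existing $\beta$) is a more explicit version of the paper's one-line justification, and in fact pins down the obstruction more precisely than the paper's remark that an $F$-strip ``always contains a $\delta$.''
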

\begin{proof}
Note that equation (2) (respectively, equation (4))
from Proposition \ref{prop:refined} is precisely
the case that $t=0$ and $k=0$ from \eqref{EQ2} (resp., \eqref{EQ4}).
Here we are using the fact that a vertical strip of type $F$
always contains a $\delta$, and hence
$(F^{(0)}X)_{0,j,0,\ell-1}= 
(F^{(0)}XA)_{0,j,0,\ell}=0$.
\end{proof}

The structure of the proof of Theorem \ref{THM24} is as follows.
In Section \ref{sec:FG}, we prove some recurrences for $F^{(t)}_{i,j,k,\ell}$
and $G^{(t)}_{i,j,k,\ell}$, and 
in Section \ref{xzero}, we prove Theorem \ref{THM24} when $|X|=0$.  In this case 
the proof of \eqref{EQ2} is very short while the proof of \eqref{EQ4} is quite long.
In Section \ref{efdf}, 
we prove  some useful identities (Proposition \ref{EFDF}), 
using Theorem \ref{THM24} in the case that $|X|=0$. 
Finally in Section \ref{induction}, 
we give the inductive proof of \eqref{EQ2} and 
 \eqref{EQ4}.  
This completes the proof of Theorem \ref{THM24}.

\begin{rem}
 Note that all the proofs use simple recurrences for $D,E$ and $F$. Even for $r=t=0$, they are much simpler than the proof given in
 \cite{CW-Duke2}, where generating functions and computer algebra were used. 
This gives the first human proof of the main result
of \cite{CW-Duke1}.
\end{rem}

\subsection{Recurrences for  $F^{(t)}_{i,j,k,\ell}$ and $G^{(t)}_{i,j,k,\ell}$}\label{sec:FG}

Recall from Definition \ref{def:FG} the definitions of 
$F^{(t)}_{i,j,k,\ell}$ and $G^{(t)}_{i,j,k,\ell}$.

\begin{lemma}\label{comb-lemmaF}
If $j>i \ge 0$ and $k\geq \ell\ge 0$, we have that 
\begin{align*}
F^{(t)}_{i,j,k,\ell}&=\delta E^{(t)}_{i,j-1,k-1,\ell}-\gamma\delta q^{2t+i+k-1} \one_{\{k=\ell,\ i=j-1\}}+F^{(t)}_{i,j,k-1,\ell-1} 
\label{recF}\\
G^{(t)}_{i,j,k,\ell}&=\beta D^{(t)}_{i,j,k-1,\ell}-\alpha\beta q^i \one_{\{k=\ell,\ i=j\}}+q G^{(t)}_{i,j,k-1,\ell-1}
\end{align*}
Otherwise we have that 
$F^{(t)}_{i,j,k,\ell}=G^{(t)}_{i,j,k,\ell} = 0.$
\end{lemma}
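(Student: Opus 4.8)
\textbf{Proof plan for Lemma \ref{comb-lemmaF}.}

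The plan is to establish each recurrence by the same ``peel off the lowest relevant tile'' argument that was used to prove Lemma \ref{comb-lemma}, adapted to the extra constraint that the new vertical strip has prescribed type $F^{(t)}$ (a $\delta$ in the bottom square, then a $\beta$ as the next Greek letter above it) or $G^{(t)}$ (a $\beta$ in the bottom square, then a $\delta$ above it). First I would dispose of the vanishing cases: $F^{(t)}_{i,j,k,\ell}=G^{(t)}_{i,j,k,\ell}=0$ unless $j>i$ (a strip of type $F$ or $G$ necessarily contains a $\delta$, hence creates at least one new horizontal $\delta$-strip, so $j\ge i+1$; and likewise it contains a $\beta$, but a $\beta$ contributes nothing to the $\delta$-count, so we still need $j>i$) and unless $k\ge\ell$ (as in Lemma \ref{comb-lemma}, a vertical strip can only \emph{decrease} or preserve the count of horizontal $\alpha/\gamma$-strips that it passes through, so $\ell\le k$; one should double-check the off-by-one here against the $02$-column analysis, but I expect no $+1$ because the bottom box is forced to be a $\delta$, not an $\alpha$ or $\gamma$).

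For the main recurrence on $F^{(t)}$, assume $j>i\ge 0$ and $k\ge\ell\ge 0$. Consider the lowest square $B$ of the new vertical strip that lies in a horizontal $\alpha/\gamma$-strip of the resulting tableau; since the strip is of type $F$, the bottom box already holds a $\delta$, so $B$ is strictly above that bottom box. Case split on the content of $B$: if $B$ is empty it gets weight $u=1$ and filling the rest of the strip is exactly the data enumerated by $F^{(t)}_{i,j,k-1,\ell-1}$; if $B$ contains a Greek letter, then — because everything to the left of $B$ in its horizontal strip must be blank and the first Greek letter above the bottom $\delta$ must be a $\beta$ — the strip is really a strip that contributes to $\delta E^{(t)}_{i,j-1,k-1,\ell}$ after ignoring the bottom $\delta$ (the $\delta$ accounts for the $j-1\to j$ shift and the $q$-powers coming from the tiles below $B$). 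The correction term $-\gamma\delta q^{2t+i+k-1}\one_{\{k=\ell,\,i=j-1\}}$ arises exactly as in the proof of Lemma \ref{comb-lemma}: the recursion for $E^{(t)}$ overcounts the boundary situation where the $\beta$ is forced into the very bottom available position, and the indicator/weight $\gamma\delta q^{2t+i+k-1}$ is precisely the spurious term (the $q^{2t}$ from the $t$ short rhombi over a $\gamma$, the $q^i$ from the initial $\delta$-strips, and the remaining $q^{k-1}$ bookkeeping) that must be subtracted; I would verify this exponent by tracking the weight rules of Definition \ref{defn:weight} applied to the forced tiles. The $G^{(t)}$ recurrence is proved the same way with the roles of $\beta$ and $\delta$ swapped: now the bottom box is a $\beta$, peeling the lowest $\alpha/\gamma$-square gives either $q\,G^{(t)}_{i,j,k-1,\ell-1}$ (empty box, weight $q$ since it sits above a $\beta$) or $\beta D^{(t)}_{i,j,k-1,\ell}$ (Greek letter, ignore the bottom $\beta$), and the overcount correction is $-\alpha\beta q^i\one_{\{k=\ell,\,i=j\}}$, the $j=i$ indicator reflecting that a $G$-strip's $\delta$ is not in the bottom box so the forced-$\delta$ degeneracy happens at $j=i$ rather than $j=i+1$.

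The main obstacle I anticipate is getting the correction terms exactly right — both the precise form of the indicator (which of $k=\ell$, $i=j$, $i=j-1$ fire) and the exact power of $q$ multiplying $\gamma\delta$ or $\alpha\beta$. These come from carefully identifying the unique ``boundary'' configuration that the naive recursion for $D^{(t)}$ or $E^{(t)}$ double-counts when we superimpose the type constraint, and then reading off its weight tile-by-tile from Definition \ref{defn:weight} together with Definition \ref{defn:maxtiling} for how many short rhombi ($t$ of them) and how many forced $q$-tiles ($i$ from the initial $\delta$-strips, $k-1$ or so from the $\alpha/\gamma$-strips) sit in that column. Everything else is a routine replay of the transfer-matrix argument of Lemma \ref{comb-lemma}.
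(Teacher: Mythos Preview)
Your approach is essentially the same as the paper's: peel off the lowest square $B$ lying in an $\alpha/\gamma$-strip of the \emph{original} tableau $T$ (not the resulting one, as you wrote), split on whether $B$ is empty or contains the forced $\beta$ (resp.\ $\delta$), and subtract the spurious $\gamma$-at-bottom (resp.\ $\alpha$-at-bottom) contribution from $E^{(t)}$ (resp.\ $D^{(t)}$). Your description of the correction term is slightly garbled---it is not that Lemma~\ref{comb-lemma} has an analogous correction (it doesn't), but simply that $E^{(t)}$ counts strips with $\beta$ \emph{or} $\gamma$ at the bottom and you must subtract the $\gamma$ case, whose weight $\gamma q^{2t+i+k-1}$ and indicator $\one_{\{k=\ell,\,i=j-1\}}$ you have correctly identified.
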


\begin{proof}
Note that if $j \leq i$ or $\ell \geq k$, then $F^{(t)}_{i,j,k,\ell}=0$. This is because any vertical strip of type $F^{(t)}$ contains at least one $\delta$ and at least one $\beta$, so adding a new vertical strip of this type to a rhombic staircase tableau necessarily increases the number of horizontal $\delta$-strips, and decreases 
the number of horizontal $\alpha/\gamma$-strips  by at least $1$.

In all other situations, we can assume that $k \geq 1$. 
Recall the combinatorial definition of 
$F^{(t)}_{i,j,k,\ell}$, and let $C$ be one of the vertical strips that it is
a sum over.  Let $T$ denote a representative tableau to which we will append $C$.
Let $B$ be the bottom-most square tile in $C$ which is part of 
an $\alpha/\gamma$-strip from $T$
 (such a square tile exists since $k \geq 1$).
If we fill $B$ with a $\beta$, 
then filling $B$ and the rest of the vertical strip $C$ is like
adding a new vertical strip with a $\beta$ in the bottom-most square tile to a 
tableau
with $i$ horizontal $\delta$-strips and $k-1$ horizontal $\alpha/\gamma$-strips.
This contributes
$\delta (E^{(t)}_{i,j-1,k-1,\ell}-\gamma q^{2t+i+k-1}\one_{\{k=\ell,\ i=j-1\}})$ 
to our generating polynomial.\footnote{The set of vertical strips with a $\beta$ in the bottom-most square tile is obtained by subtracting the set of vertical strips with a $\gamma$ in the bottom-most square tile from the set of vertical strips of type $E$. This explains the term containing $\gamma$ in the expression.}

On the other hand, if we leave $B$ empty,
then this square tile will get a weight $u=1$.
The generating function for filling the rest of the vertical strip $C$
is equal to 
$F^{(t)}_{i,j,k-1,\ell-1}$.  This completes the proof of the first recurrence.

The proof is the same for $G^{(t)}_{i,j,k,\ell}$. The base case where $G^{(t)}_{i,j,k,\ell}=0$ for $j \leq i$ or $\ell \geq k$ is literally the same. 
For the rest of the argument, the square tile $B$ can either contain a $\delta$, which
contributes
$\beta (D^{(t)}_{i,j-1,k-1,\ell}-\alpha q^i \one_{\{k=\ell,\ i=j-1\}})$, 
or a $q$, which contributes
$qG^{(t)}_{i,j,k-1,\ell-1}$ to our generating polynomial.
\end{proof}

\begin{lemma}\label{lem:FG}
For all $i,j,k$ and $\ell$,
\begin{equation*}
 F^{(t)}_{i,j,k,\ell}=G^{(t)}_{i,j,k,\ell}.
\end{equation*}
\end{lemma}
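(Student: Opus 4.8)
\textbf{Proof proposal for Lemma \ref{lem:FG}.}

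The plan is to prove the identity $F^{(t)}_{i,j,k,\ell}=G^{(t)}_{i,j,k,\ell}$ by induction on $k$ (equivalently on $k+\ell$), using the two recurrences established in Lemma \ref{comb-lemmaF} together with a bijective argument handling the base of the induction. First I would dispose of the trivial cases: by Lemma \ref{comb-lemmaF}, both $F^{(t)}_{i,j,k,\ell}$ and $G^{(t)}_{i,j,k,\ell}$ vanish unless $j>i\ge 0$ and $k\geq\ell\ge 0$, so it suffices to prove equality in that range. The natural base case is $k=\ell$ (equivalently, when no horizontal $\alpha/\gamma$-strip is destroyed beyond the forced one). Here a vertical strip of type $F^{(t)}$ has its only potential $\alpha/\gamma$-carrying square already occupied, and one can describe explicitly the strips being summed over; I would argue directly (combinatorially, matching tiles left of the $\delta$ with tiles left of the $\beta$ after swapping the roles of the bottom $\delta$ and $\beta$) that the $F$-sum and the $G$-sum coincide. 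In fact the $k=\ell$ case should reduce, via the recurrences, to checking the relation between $\delta E^{(t)}_{i,j-1,k-1,\ell}$-type terms and $\beta D^{(t)}_{i,j,k-1,\ell}$-type terms, which is exactly the kind of identity one expects from the symmetry between $\beta$/$\delta$ and $D$/$E$ in Definition \ref{def:matrices}.

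For the inductive step, assume $k>\ell$ and that $F^{(t)}=G^{(t)}$ holds for all smaller values of $k$. The recurrences of Lemma \ref{comb-lemmaF} read
\begin{align*}
F^{(t)}_{i,j,k,\ell}&=\delta E^{(t)}_{i,j-1,k-1,\ell}-\gamma\delta q^{2t+i+k-1}\one_{\{k=\ell,\ i=j-1\}}+F^{(t)}_{i,j,k-1,\ell-1},\\
G^{(t)}_{i,j,k,\ell}&=\beta D^{(t)}_{i,j,k-1,\ell}-\alpha\beta q^i\one_{\{k=\ell,\ i=j\}}+qG^{(t)}_{i,j,k-1,\ell-1}.
\end{align*}
Since $k>\ell$, the indicator terms both vanish, so subtracting gives
\[
F^{(t)}_{i,j,k,\ell}-G^{(t)}_{i,j,k,\ell}=\bigl(\delta E^{(t)}_{i,j-1,k-1,\ell}-\beta D^{(t)}_{i,j,k-1,\ell}\bigr)+\bigl(F^{(t)}_{i,j,k-1,\ell-1}-qG^{(t)}_{i,j,k-1,\ell-1}\bigr).
\]
By the inductive hypothesis $F^{(t)}_{i,j,k-1,\ell-1}=G^{(t)}_{i,j,k-1,\ell-1}$, so the second bracket is $(1-q)F^{(t)}_{i,j,k-1,\ell-1}$. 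Thus the lemma will follow once I show
\[
\beta D^{(t)}_{i,j,k-1,\ell}-\delta E^{(t)}_{i,j-1,k-1,\ell}=(1-q)F^{(t)}_{i,j,k-1,\ell-1}
\]
for all admissible indices. But this is exactly the special case $|X|=0$ (so that $r=0$, $\|X\|=0$) of equation \eqref{EQ2} of Theorem \ref{THM24}, after the index shift $k\mapsto k-1$, $\ell\mapsto\ell$ --- indeed \eqref{EQ2} with $X$ empty, $r=0$, and general $k$ reads $\beta D^{(t)}_{0,j,k,\ell}-\delta E^{(t)}_{0,j-1,k,\ell}=\alpha\beta\,\one_{\{k=0\}}\cdots-\gamma\delta q^{2t+k}\cdots+(1-q)F^{(t)}_{0,j,k,\ell-1}$ in the range $k\geq 1$, where the first two terms drop out. (One must also observe the obvious fact that $F^{(t)},G^{(t)},D^{(t)},E^{(t)}$ do not depend on $i$ in an essential way: by the translation argument of Lemma \ref{ij_reduction} applied to single columns, $F^{(t)}_{i,j,k,\ell}=q^{?}F^{(t)}_{0,j-i,k,\ell}$ and likewise for $G,D,E$, so it is enough to treat $i=0$, which is what \eqref{EQ2} does.)

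The main obstacle, then, is the apparent circularity: \eqref{EQ2} of Theorem \ref{THM24} is proved (in Section \ref{induction}) by an induction on $|X|$ whose base case $|X|=0$ is handled in Section \ref{xzero}, and Lemma \ref{lem:FG} is used \emph{inside} the proof of the $|X|=0$ case of \eqref{EQ4} and in the inductive step. So I must be careful about the logical order: the correct route is to first prove the $|X|=0$ case of \eqref{EQ2} \emph{directly} from the $D,E,F$ recurrences (this is the "very short" proof promised in Section \ref{sec:proof2}), then deduce Lemma \ref{lem:FG} from it as above, and only afterward use Lemma \ref{lem:FG} in the longer arguments. Concretely, the safest presentation is to prove Lemma \ref{lem:FG} by induction on $k$ where the inductive step reduces, as shown, to the identity $\beta D^{(t)}_{0,j,k,\ell}-\delta E^{(t)}_{0,j-1,k,\ell}-(1-q)F^{(t)}_{0,j,k,\ell-1}=0$ for $k\geq 0$, and to prove \emph{that} identity separately by a direct induction on $k$ using only the recurrences from Definition \ref{def:matrices} and Lemma \ref{comb-lemmaF} --- a self-contained computation in which the $\beta$-recurrence for $D^{(t)}$, the $\delta$-recurrence for $E^{(t)}$ (from the "otherwise" lines of Definition \ref{def:matrices}), and the $F^{(t)}$-recurrence of Lemma \ref{comb-lemmaF} are matched term by term, with the $k=0$ base case checked from the explicit first-few-cases formulas. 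I expect that computation to be short but to require care with the $q$-powers $q^{2t+i+k-1}$ and the boundary indicators.
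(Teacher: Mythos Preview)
Your approach is valid but takes a genuinely different route from the paper. The paper proves Lemma~\ref{lem:FG} in one paragraph by a direct weight-preserving bijection: given a vertical strip of type $F^{(t)}$, it swaps the number $m$ of empty $\alpha/\gamma$-adjacent squares between the bottom $\delta$ and the $\beta$ with the number $n$ of such squares above the $\beta$, producing a strip of type $G^{(t)}$ of the same weight. No induction, no auxiliary identity.

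Your route is an algebraic induction on $k$, reducing $F=G$ to the identity $\beta D^{(t)}_{i,j,k-1,\ell}-\delta E^{(t)}_{i,j-1,k-1,\ell}=(1-q)F^{(t)}_{i,j,k-1,\ell-1}$ (plus boundary terms), which is the $|X|=0$ case of \eqref{EQ2}. You are right to flag the circularity: in the paper, that very case of \eqref{EQ2} is proved \emph{using} Lemma~\ref{lem:FG} (it invokes \eqref{recf2}, which is the $G$-recurrence rewritten with $F$ in place of $G$). Your proposed fix --- prove the auxiliary identity by a separate induction on $k$ using only the $D$, $E$ recurrences and the $F$-recurrence from Lemma~\ref{comb-lemmaF} (never the $G$-recurrence) --- does work: the $\beta\delta(D+E)$ terms cancel, leaving $\beta D_{i,j,k-1,\ell-1}-q\delta E_{i,j-1,k-1,\ell-1}$, and then the induction hypothesis together with the $F$-recurrence closes the loop. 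So your argument can be made self-contained, at the cost of carrying out this side computation that the paper avoids entirely via the bijection.

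One correction: your separate ``base case $k=\ell$'' is unnecessary and muddles the structure. A clean induction on $k$ starting from $k=0$ (where $F=G=0$ trivially) handles all $\ell\le k$ uniformly; when $k=\ell$ the indicator terms in the two recurrences are nonzero but cancel exactly against the boundary terms of the auxiliary identity, so no special treatment is needed. Also, your appeal to Lemma~\ref{ij_reduction} to reduce to $i=0$ is morally right but not literally covered by that lemma (which is stated for $(A,t)$-compatible words); the analogous one-column statement $F^{(t)}_{i,j,k,\ell}=qF^{(t)}_{i-1,j-1,k,\ell}$ is immediate from the combinatorics and should be stated explicitly.
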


\begin{proof}
We give a bijective proof of the lemma.
Let $C$ be one of the vertical strips of type $F^{(t)}$ which 
$F^{(t)}_{i,j,k,\ell}$ is a sum over.  Let $T$ be a representative tableau
to which we are appending $C$.  
Let $m$ be the number of empty square tiles in $C$ which are:
 adjacent to
horizontal $\alpha/\gamma$-strips in $T$;
above the bottom-most $\delta$; and below the next
Greek letter (which is a $\beta$).  
And let $n$
be the number of empty square tiles which are:
adjacent to horizontal $\alpha/\gamma$-strips in $T$;
above the bottom-most $\beta$ in $C$; and below the next Greek letter in $C$. 
It is easy to see this strip has the same weight 
as a vertical strip of type $G^{(t)}$ in which the roles of $m$ and $n$
are reversed, 
see Figure \ref{bijFG}.
This map is clearly a bijection, so this proves the lemma.
\end{proof}

\begin{figure}[h]
\centering
\includegraphics[height=1.5in]{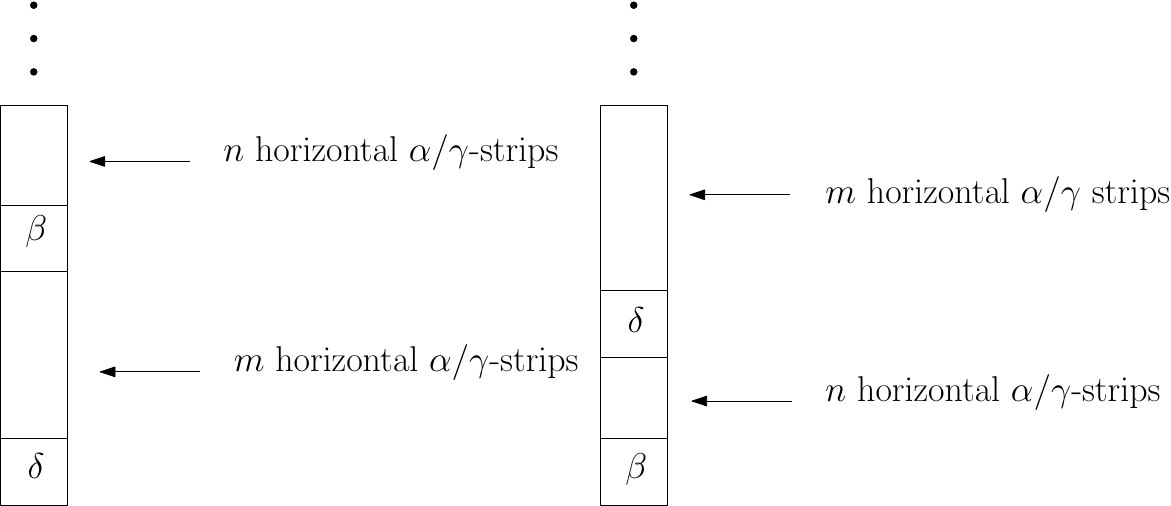}
\caption{The bijection between $F$ and $G$.}
\label{bijFG}
\end{figure}



\subsection{The proof of Theorem \ref{THM24} for $|X|=0$}
\label{xzero}

\subsubsection{The proof of \eqref{EQ2} when $|X|=0$.}

We need to show that 
\begin{equation} 
\beta D^{(t)}_{0,j,k,\ell}-\delta E^{(t)}_{0,j-1,k,\ell}=\alpha\beta \one_{\{j=0,\ k=\ell-1\}}
-\gamma\delta q^{2t+k}\one_{\{j=1,\ k=\ell-1\}}+(1-q)F^{(t)}_{0,j,k,\ell-1}.
\label{EQ2b}
\end{equation}
First consider the case that $k=0$.   
In this case \eqref{EQ2b} becomes
\begin{equation} 
\beta D^{(t)}_{0,j,0,\ell}-\delta E^{(t)}_{0,j-1,0,\ell}=\alpha\beta \one_{\{j=0,\ k=0,\ \ell=1\}}
-\gamma\delta q^{2t}\one_{\{j=1,\ k=0,\ \ell=1\}}+(1-q)F^{(t)}_{0,j,0,\ell-1}.
\label{EQ2bsecond}
\end{equation}
From Definition \ref{def:matrices}
we have that 
\[
D^{(t)}_{0,j,0,\ell}=\begin{cases}
\delta(q^t+(\alpha+\gamma q^t)[t]_q) & \mbox{if } j=1, \ell=0\\
\alpha & \mbox{if } j=0, \ell=1\\
0 & \mbox{otherwise}
\end{cases} 
\]
and also 
\[E^{(t)}_{0,j,0,\ell}=
\begin{cases}
\beta(q^t+(\alpha+\gamma q^t)[t]_q) & \mbox{if } j=0, \ell=0\\
\gamma q^{2t} & \mbox{if } j=0, \ell=1\\ 
0 & \mbox{otherwise.}
\end{cases}
\]
Using the fact that $F_{0,j,0,\ell-1}=0$, it is easy to check that we get
\eqref{EQ2bsecond}.

We now consider the case that $k>0$.
When $k>0$, we obtain the following recurrences
from 
Lemma \ref{comb-lemmaF}
and Lemma \ref{lem:FG}.
Note that we will 
omit the superscripts $(t)$ from $D$, $E$, and $F$ for readability.
\begin{eqnarray}
F_{i,j,k,\ell}&=&\delta E_{i,j-1,k-1,\ell}-
\gamma\delta q^{2t+i+k-1}
\one_{\{j=i+1,\ k=\ell\}}
+ F_{i,j,k-1,\ell-1}\label{recf1}\\
&=&\beta D_{i,j,k-1,\ell}-
\alpha\beta q^i
\one_{\{j=i,\ k=\ell\}}
+q F_{i,j,k-1,\ell-1}\label{recf2}
\end{eqnarray}

Using 
\eqref{recf1} and \eqref{recf2},
we get the desired expression
\begin{equation*}
\beta D_{0,j,k,\ell}-\delta E_{0,j-1,k,\ell}
 =F_{0,j,k,\ell-1}+\alpha\beta \one_{\{j=0,\ k=\ell-1\}}-qF_{0,j,k,\ell-1}-\gamma\delta q^{2t+k} \one_{\{j=1,k=\ell-1\}}.
\end{equation*}

\subsubsection{The proof of  \eqref{EQ4} when $|X|=0$.}

Note that Lemma \ref{base_caseF} directly implies
\eqref{EQ4} 
in the case that  $|X|=0$.
\begin{lemma}\label{base_caseF}
We have that 
\begin{align*}
D^{(t)} A_{0,j,k,\ell} - qA D^{(t+1)}_{0,j,k,\ell} &= 
A E^{(t+1)}_{0,j,k,\ell} - q E^{(t)} A_{0,j,k,\ell}\\
 &= \alpha\beta A_{0,j,k,\ell}-\gamma\delta q^{2t+k+2}A_{0,j-1,k,\ell}+(1-q) F^{(t)} A_{0,j,k,\ell}.
\end{align*}
\end{lemma}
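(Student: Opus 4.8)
The plan is to prove Lemma~\ref{base_caseF} by induction on $k$. Since the statement is a chain of two equalities with common right-hand side
\[
R^{(t)}_{j,k,\ell}:=\alpha\beta A_{0,j,k,\ell}-\gamma\delta q^{2t+k+2}A_{0,j-1,k,\ell}+(1-q)F^{(t)}A_{0,j,k,\ell},
\]
it suffices to establish, for every $k$ and all $j,\ell,t$, the two identities $D^{(t)}A_{0,j,k,\ell}-qAD^{(t+1)}_{0,j,k,\ell}=R^{(t)}_{j,k,\ell}$ and $AE^{(t+1)}_{0,j,k,\ell}-qE^{(t)}A_{0,j,k,\ell}=R^{(t)}_{j,k,\ell}$; the middle equality of the lemma then follows automatically.

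In the base case $k=0$ everything in sight is explicit. Definition~\ref{def:matrices}, together with the closed forms of $D^{(t)}_{0,\cdot,0,\cdot}$ and $E^{(t)}_{0,\cdot,0,\cdot}$ recorded in Section~\ref{xzero} and the closed formula for $A_{i,j,k,\ell}$ from the Remark after Definition~\ref{def:matrices}, lets one evaluate $(D^{(t)}A)_{0,j,0,\ell}$, $(AD^{(t+1)})_{0,j,0,\ell}$, $(E^{(t)}A)_{0,j,0,\ell}$, $(AE^{(t+1)})_{0,j,0,\ell}$ and $A_{0,j,0,\ell}$ by hand; moreover a vertical strip of type $F$ always decreases the number of horizontal $\alpha/\gamma$-strips, so $F^{(t)}A_{0,j,0,\ell}=0$. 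Only the indices $(j,\ell)\in\{(0,0),(0,1),(1,0)\}$ contribute, and after substitution both identities reduce to the elementary facts $(q-1)[t]_q=q^t-1$ and $[t+1]_q=[t]_q+q^t$.

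For the inductive step ($k\ge 1$) I would, in each of the six quantities $D^{(t)}A$, $AD^{(t+1)}$, $E^{(t)}A$, $AE^{(t+1)}$, $F^{(t)}A$ and $A$ evaluated at row-index $k$, apply to the unique factor whose second row-index equals $k$ its defining recurrence --- the recurrences of Definition~\ref{def:matrices} for $A$, $D^{(t)}$, $E^{(t)}$, and the recurrence of Lemma~\ref{comb-lemmaF} combined with Lemma~\ref{lem:FG} for $F^{(t)}$ --- thereby dropping the level from $k$ to $k-1$. When the resulting sums are reassembled as matrix products, the shifts $a\mapsto a+1$ of the other row-index that arise are absorbed using Lemma~\ref{ij_reduction}, which contributes the factors $q^{\|A\|}=q^2$ and $q^{\|D^{(t)}\|}=q^{\|E^{(t)}\|}=q$. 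This expresses $D^{(t)}A_{0,j,k,\ell}$, $AD^{(t+1)}_{0,j,k,\ell}$, $F^{(t)}A_{0,j,k,\ell}$ (and $A_{0,j,k,\ell}$, $A_{0,j-1,k,\ell}$) as combinations of level-$(k-1)$ products; note that these expansions are coupled, since expanding $D^{(t)}A$ produces an $E^{(t)}A$ term, expanding $AD^{(t+1)}$ produces an $AE^{(t+1)}$ term, and expanding $F^{(t)}A$ produces an $E^{(t)}A$ term. Substituting everything into the two target identities, I would replace the level-$(k-1)$ combinations using the inductive hypothesis together with \eqref{EQ2b} summed against $A$ (which is what disposes of the stray $E^{(t)}A$ terms), and verify that all remaining terms cancel.

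I expect this final cancellation to be the main obstacle. It requires tracking a fairly large number of $q$-powers and of $\gamma\delta$-``error'' terms through several interlocking recurrences, and because the expansions mix the $D$- and $E$-sides one must prove the two equalities of the lemma at level $k$ simultaneously, drawing on both of them (and on \eqref{EQ2b}) at level $k-1$. This is precisely the computation advertised as ``quite long'' in Section~\ref{sec:proof2}; it is nonetheless entirely mechanical, using only the simple recurrences above, which is what makes it more transparent than the generating-function argument of \cite{CW-Duke2}.
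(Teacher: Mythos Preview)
Your plan is correct and matches the paper's proof essentially exactly: induction on $k$, an explicit base case at $k=0$ (using $F^{(t)}A_{0,j,0,\ell}=0$), and for $k\ge 1$ expansion via the recurrences for $A$, $D^{(t)}$, $E^{(t)}$, $F^{(t)}$ combined with Lemma~\ref{ij_reduction}, followed by simultaneous use of the inductive hypothesis for both commutators. The only cosmetic difference is in the cleanup: the paper packages the stray $\delta q^2(1-q)E^{(t)}A$ term by deriving a separate recurrence for $F^{(t)}A_{0,j,k,\ell}$ from \eqref{recf1} and \eqref{reca}, whereas you propose to invoke \eqref{EQ2b} contracted with $A$; since \eqref{EQ2b} is itself obtained from \eqref{recf1}--\eqref{recf2}, these are equivalent bookkeeping devices.
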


\begin{proof}
Once again, we prove this by induction on $k$. If $k=0$, then 
we use the combinatorial interpretation of matrix products
from Theorem \ref{comb-interpret}(1) to obtain recurrences
for each of 
$AD^{(t+1)}_{0,j,0,\ell}$, 
$D^{(t)}A_{0,j,0,\ell}$, 
$AE^{(t+1)}_{0,j,0,\ell}$, and
$E^{(t)}A_{0,j,0,\ell}$. 
For example, the quantity $AD^{(t+1)}_{0,j,0,\ell}$ is the 
generating function for certain vertical strips consisting of 
one square box with an $\alpha$ or $\delta$ inside it, 
and with $t+1$ short rhombi above it, see Figure 
\ref{D_recurrence}.
\begin{figure}[h]
\centering
\includegraphics[height=1.5in]{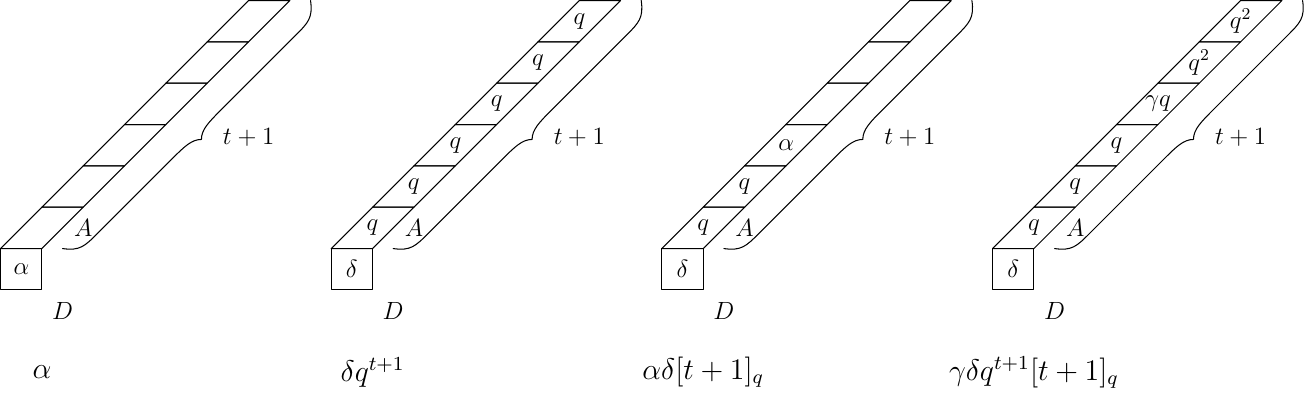}
\caption{The vertical strips contributing to $AD^{(t+1)}_{0,j,0,\ell}$}
\label{D_recurrence}
\end{figure}

Note that the third picture should be viewed as shorthand for 
$(t+1)$ different vertical strips, because there are $(t+1)$
choices for where to place the $\alpha$.  Similarly in the fourth
picture there are $(t+1)$ choices for where to place the $\gamma q$.
So we get:

\begin{align*}
AD^{(t+1)}_{0,j,0,\ell} &=
\begin{cases}
\alpha  & {\rm if} \ {j=0} \ {\rm and}\ {\ell=1}\\
\delta(q^{t+1}+(\alpha +\gamma q^{t+1})[t+1]_q) & {\rm if} \ {j=1}\  {\rm and}\ \ell =0\\
0 &{\rm otherwise}
\end{cases} 
\\
D^{(t)}A_{0,j,0,\ell}&=
\begin{cases}
\alpha q &  {\rm if} \ {j=0} \  {\rm and}\ {\ell=1}\\
\alpha \delta q + \delta q^2(q^t+(\alpha +\gamma q^{t})[t]_q) & {\rm if} \ {j=1}\  {\rm and}\ \ell=0\\
\alpha\beta   &  {\rm if} \ {j=0} \  {\rm and}\ {\ell=0}\\
0 & \qquad {\rm otherwise}
\end{cases}
\\
AE^{(t+1)}_{0,j,0,\ell}&=
\begin{cases}
\gamma q^{2t +2}  & {\rm if} \ {j=0} \  {\rm and}\ {\ell=1}\\
\beta(q^{t+1}+(\alpha +\gamma q^{t+1})[t+1]_q) & {\rm if} \ {j=0}\  {\rm and}\ \ell=0\\
0 &{\rm otherwise}
\end{cases}
\\
E^{(t)}A_{0,j,0,\ell}&=
\begin{cases}
\gamma q^{2t+1} & {\rm if} \ {j=0} \  {\rm and}\ {\ell=1}\\
\beta \gamma q^{2t} + \beta (q^t+(\alpha +\gamma q^{t})[t]_q) & {\rm if} \ {j=0}\  {\rm and}\ \ell=0\\
\gamma \delta q^{2t+1}   & {\rm if} \ {j=1} \  {\rm and}\ {\ell=0}\\
0 &{\rm otherwise}
\end{cases}
\end{align*}

In what follows, the superscripts $(t)$ and $(t+1)$ are implicit, 
but we omit them for readability. In each of the various cases
($j=0$ and $\ell=0$, etc.), we get
\begin{align*}
AE_{0,j,0,\ell}-qEA_{0,j,0,\ell}&=DA_{0,j,0,\ell}-qAD_{0,j,0,\ell}=
\alpha\beta\one_{\{j=0,\ell=0\}}-\gamma\delta q^{2t+2}\one_{\{j=1,\ell=0\}} \\
&= \alpha \beta A_{0,j,0,\ell}-\gamma \delta q^{2t+2} A_{0,j-1,0,\ell}
+(1-q) F^{(t)}A_{0,j,0,\ell} ,
\end{align*}
as desired (where in the last step we used the fact that 
$F^{(t)}A_{0,j,0,\ell}=0$).

Now we suppose that $k>0$.  We will prove the claim by induction. 
For $k>0$, we have from Definition \ref{def:matrices} that 
\begin{align}
D_{i,j,k,\ell}&=\delta(D+E)_{i,j-1,k-1,\ell}+D_{i,j,k-1,\ell-1}\label{recd} \text{ and } \\
A_{i,j,k,\ell}&=\beta A_{i,j,k-1,\ell}+\delta q A_{i,j-1,k-1,\ell}+q A_{i,j,k-1,\ell-1}.
\label{reca}
\end{align}
Using these, the definition of matrix multiplication, and 
Lemma \ref{ij_reduction}, we get:
{\small
\begin{eqnarray*}
 AD_{0,j,k,\ell}&=&\sum_{u,v}(\beta A_{0,u,k-1,v}+\delta q A_{0,u-1,k-1,v})D_{u,j,v,\ell}+q A_{0,u,k-1,v-1}(\delta (D+E)_{u,j-1,v-1,\ell}+D_{u,j,v-1,\ell-1})\\
 DA_{0,j,k,\ell}&=&\sum_{u,v} \delta(D+E)_{0,u-1,k-1,v}  q^2 A_{u-1,j-1,v,l} + D_{0,u,k-1,v-1} (\beta A_{u,j,v-1,\ell}+\delta q A_{u,j-1,v-1,\ell}+q A_{u,j,v-1,\ell-1}).
\end{eqnarray*}
}
This gives
\begin{align*}
AD_{0,j,k,\ell}&=\beta AD_{0,j,k-1,\ell}+\delta q^2 AD_{0,j-1,k-1,\ell}+\delta q A(D+E)_{0,j-1,k-1,\ell}+qAD_{0,j,k-1,\ell-1} \text{ and }\\
DA_{0,j,k,\ell}&=\beta DA_{0,j,k-1,\ell}+\delta q DA_{0,j-1,k-1,\ell}+\delta q^2 (D+E)A_{0,j-1,k-1,\ell}+qDA_{0,j,k-1,\ell-1}.
\end{align*}
We get
\begin{eqnarray*}
DA_{0,j,k,\ell}-qAD_{0,j,k,\ell}&=&
\delta q^2(DA_{0,j-1,k-1,\ell}-qAD_{0,j-1,k-1,\ell}-AE_{0,j-1,k-1,\ell}+qEA_{0,j-1,k-1,\ell}) \\
&&+\delta q(DA_{0,j-1,k-1,\ell}-qAD_{0,j-1,k-1,\ell})+\beta(DA_{0,j,k-1,\ell}-qAD_{0,j,k-1,\ell}\\
&&+ q(DA_{0,j,k-1,\ell-1}-qAD_{0,j,k-1,\ell-1})+\delta q^2(1-q)EA_{0,j-1,k-1,\ell}\\
\end{eqnarray*}
By the induction hypothesis, we suppose that Lemma \ref{base_caseF}
holds for $k-1$.  Substituting, we obtain:
\begin{eqnarray}
DA_{0,j,k,\ell}-qAD_{0,j,k,\ell}&=&\delta q\left(\alpha\beta A_{0,j-1,k-1,\ell}-\gamma \delta q^{2t+k+2-1}A_{0,j-2,k-1,\ell}+(1-q)FA_{0,j-1,k-1,\ell}\right) \nonumber \\
&& + \beta\left(\alpha\beta A_{0,j,k-1,\ell}-\gamma \delta q^{2t+k+2-1}A_{0,j-1,k-1,\ell}+(1-q)FA_{0,j,k-1,\ell}\right) \nonumber \\
&& + q\left(\alpha\beta A_{0,j,k-1,\ell-1}-\gamma \delta q^{2t+k+2-1}A_{0,j-1,k-1,\ell-1}+(1-q)FA_{0,j,k-1,\ell-1}\right) \nonumber \\
&&+\delta q^2(1-q)EA_{0,j-1,k-1,\ell} \nonumber \\
&=& \alpha\beta(\delta q  A_{0,j-1,k-1,\ell}+\beta  A_{0,j,k-1,\ell}+q A_{0,j,k-1,\ell-1}) \label{DAqAD1} \\
&&-\gamma \delta q^{2t+k+1} (\delta q  A_{0,j-2,k-1,\ell}+\beta  A_{0,j-1,k-1,\ell}+q A_{0,j-1,k-1,\ell-1}) \label{DAqAD2} \\
&&+(1-q)(\delta q FA_{0,j-1,k-1,\ell}+\beta FA_{0,j,k-1,\ell}+q FA_{0,j,k-1,\ell-1}+\delta q^2EA_{0,j-1,k-1,\ell}). \label{DAqAD3}
\end{eqnarray}

From  \eqref{reca}, it is clear that the quantity in 
 line  \eqref{DAqAD1} is equal to 
$\alpha\beta A_{0,j,k,\ell}$, and the quantity in line
\eqref{DAqAD2} is equal to 
$-\gamma \delta q^{2t+k+2-1}A_{0,j-1,k,\ell}$.

Using  \eqref{recf1}, \eqref{reca},  the definition of 
matrix multiplication, and Lemma \ref{ij_reduction}, we  get
\begin{equation*}
FA_{0,j,k,\ell} = \delta q FA_{0,j-1,k-1,\ell}+\beta FA_{0,j,k-1,\ell} + \delta q^2 EA_{0,j-1,k-1,\ell}-\gamma\delta q^{2t+k+1} A_{0,j-1,k,\ell}+q FA_{0,j,k-1,\ell-1},
\end{equation*}
which can be rewritten as
\begin{equation*}\label{FA-formula}
\delta q FA_{0,j-1,k-1,\ell}+\beta FA_{0,j,k-1,\ell}+q FA_{0,j,k-1,\ell-1}+\delta q^2 EA_{0,j-1,k-1,\ell}
=FA_{0,j,k,\ell}+\gamma\delta q^{2t+k+1}A_{0,j-1,k,\ell}.
\end{equation*} 
Using this to replace line \eqref{DAqAD3}, and then simplifying, 
we obtain the desired expression
\begin{eqnarray*}
DA_{0,j,k,\ell}-qAD_{0,j,k,\ell}&=& \alpha\beta A_{0,j,k,\ell}-\gamma \delta q^{2t+k+2}A_{0,j-1.k,\ell}+(1-q)FA_{0,j,k,\ell}.
\end{eqnarray*}

The proof that 
\begin{eqnarray*}
AE_{0,j,k,\ell}-qEA_{0,j,k,\ell}&=& \alpha\beta A_{0,j,k,\ell}-\gamma \delta q^{2t+k+2}A_{0,j-1.k,\ell}+(1-q)FA_{0,j,k,\ell}
\end{eqnarray*}
is completely analogous.
This concludes our proof of Lemma \ref{base_caseF}, which is 
the base case of $|X|=0$ in \eqref{EQ4}.
\end{proof}

\subsection{Commutator identities}
\label{efdf}

In this section we will prove the three ``commutator'' 
identities of
Proposition \ref{EFDF}.  
Actually we will prove \eqref{EF} and \eqref{AF} but not \eqref{DF}, because 
the proof of \eqref{DF} is analogous to that of \eqref{EF}.
Proposition \ref{EFDF} will be a useful tool
in the inductive proof of Theorem \ref{THM24}.

\begin{prop}\label{EFDF}
\begin{align}
(1-q)(E^{(t)}F^{(t)} - F^{(t)} E^{(t)})_{0,j,k,\ell} &= (1-q^j)\alpha\beta E^{(t)}_{0,j,k,\ell}+\gamma \delta q^{2t}(q^{\ell+j-1}-q^{k+1})E^{(t)}_{0,j-1,k,\ell},
\label{EF} \\
(1-q)(D^{(t)}F^{(t)} - F^{(t)}D^{(t)})_{0,j,k,\ell} &= (1-q^j)\alpha\beta D^{(t)}_{0,j,k,\ell}+\gamma \delta q^{2t}(q^{\ell+j-1}-q^{k+1})D^{(t)}_{0,j-1,k,\ell},
\label{DF} \\
(1-q)(AF^{(t+1)}-F^{(t)}A)_{0,j,k,\ell} &= (1-q^j)\alpha\beta A_{0,j,k,\ell}+\gamma\delta q^{2t}(q^{\ell+j+1}-q^{k+2})A_{0,j-1,k,\ell}.
\label{AF}
\end{align}
\end{prop}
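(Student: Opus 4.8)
The plan is to prove the three identities \eqref{EF}, \eqref{DF}, \eqref{AF} by induction on $k$, in the order: first \eqref{EF} (and \eqref{DF}, which is obtained from \eqref{EF} by the structural symmetry interchanging the roles of $D^{(t)}$ and $E^{(t)}$ and needs no new idea), and then \eqref{AF}, using \eqref{EF} and \eqref{DF} together with Lemma \ref{base_caseF} (the $|X|=0$ case of \eqref{EQ4}). The only tools are the $k$-recurrences of Definition \ref{def:matrices} for $D^{(t)}$, $E^{(t)}$, $A$; the two $k$-recurrences for $F^{(t)}$ coming from Lemma \ref{comb-lemmaF} and Lemma \ref{lem:FG}; Lemma \ref{ij_reduction} for shifting the first index of a transfer matrix; and the already-established $|X|=0$ cases of \eqref{EQ2} and \eqref{EQ4}. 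For the base case $k=0$, note that a vertical strip of type $F^{(t)}$ or $F^{(t+1)}$ contributes nothing unless the tableau it is appended to already has at least one horizontal $\alpha/\gamma$-strip, so the matrices $F^{(t)}, F^{(t+1)}$ are almost entirely supported away from the $k=0$ slice; combined with the explicit small entries of $D^{(t)}, E^{(t)}, A$ recorded in Definition \ref{def:matrices} and in the proof of Lemma \ref{base_caseF}, the identities reduce in this case to a short direct check in which most right-hand side terms vanish.

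For the inductive step of \eqref{EF}, I would write $(E^{(t)}F^{(t)})_{0,j,k,\ell}=\sum_{a,b}E^{(t)}_{0,a,k,b}F^{(t)}_{a,j,b,\ell}$ and $(F^{(t)}E^{(t)})_{0,j,k,\ell}=\sum_{a,b}F^{(t)}_{0,a,k,b}E^{(t)}_{a,j,b,\ell}$, expand the factor $E^{(t)}$ by its recurrence from Definition \ref{def:matrices} and the factor $F^{(t)}$ by the recurrences of Lemma \ref{comb-lemmaF} and Lemma \ref{lem:FG} (one needs \emph{both} forms of the $F$-recurrence, since only by mixing them can the two summation indices be made to line up after reindexing), and then re-collect. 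Most of the resulting terms reassemble into the $q$-commutator $(E^{(t)}F^{(t)}-F^{(t)}E^{(t)})_{0,\cdot,k-1,\cdot}$, to which the inductive hypothesis applies, plus single matrix entries of $E^{(t)}$, $D^{(t)}$, $F^{(t)}$ at level $k-1$ carrying explicit powers of $q$; these last are reconciled using Lemma \ref{ij_reduction} (to absorb first-index shifts) and the $|X|=0$ case of \eqref{EQ2} (to trade $F^{(t)}$ for $\beta D^{(t)}-\delta E^{(t)}$ plus boundary terms), and after simplification they must add up to precisely $(1-q^j)\alpha\beta E^{(t)}_{0,j,k,\ell}+\gamma\delta q^{2t}(q^{\ell+j-1}-q^{k+1})E^{(t)}_{0,j-1,k,\ell}$. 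The proof of \eqref{DF} is the same argument with $E^{(t)}$ replaced by $D^{(t)}$.

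For \eqref{AF}, with \eqref{EF} and \eqref{DF} in hand, the inductive step proceeds in the same spirit: expand $(AF^{(t+1)})_{0,j,k,\ell}$ and $(F^{(t)}A)_{0,j,k,\ell}$ using the $k$-recurrences for $A$ (Definition \ref{def:matrices}) and for $F^{(t+1)},F^{(t)}$ (Lemma \ref{comb-lemmaF}, Lemma \ref{lem:FG}); apply Lemma \ref{ij_reduction} for $A$ to realign first indices; route the terms of the form $(E^{(t)}F^{(t)})$ or $(D^{(t)}F^{(t)})$ (and their reversals) through \eqref{EF} and \eqref{DF}; and route the terms combining $F^{(t)}A$ with $AD^{(t+1)}$ or $AE^{(t+1)}$ through Lemma \ref{base_caseF}. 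After cancellation this should yield $(1-q^j)\alpha\beta A_{0,j,k,\ell}+\gamma\delta q^{2t}(q^{\ell+j+1}-q^{k+2})A_{0,j-1,k,\ell}$, as required.

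The conceptual content is modest; the real obstacle is the bookkeeping. The two delicate points are: arranging the expansions so that, after reindexing, the pairs of summation indices in the matrix products match up (this is what forces the simultaneous use of both $F$-recurrences); and then tracking the exact powers of $q$ and the indicator/boundary terms closely enough that they assemble into the precise factors $1-q^j$, $q^{\ell+j-1}-q^{k+1}$, and $q^{\ell+j+1}-q^{k+2}$ appearing on the right-hand sides. One must also keep in mind that all three statements live only on the slice $i=0$, so Lemma \ref{ij_reduction} has to be invoked each time a recurrence momentarily produces an entry with nonzero first index.
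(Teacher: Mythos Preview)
Your overall plan---induction on $k$ using the $k$-recurrences for $D^{(t)},E^{(t)},A$ from Definition \ref{def:matrices} and the two $F$-recurrences from Lemma \ref{comb-lemmaF}/Lemma \ref{lem:FG}, with the $|X|=0$ case of Theorem \ref{THM24} as input---is exactly the paper's approach. Two points where your description diverges from what actually happens are worth noting.

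First, for \eqref{EF}: you suggest that after expanding, ``most of the resulting terms reassemble into $(E^{(t)}F^{(t)}-F^{(t)}E^{(t)})_{0,\cdot,k-1,\cdot}$''. In fact, because the $E^{(t)}$-recurrence involves $\beta(D^{(t)}+E^{(t)})$, the expansion of $(EF-FE)_{0,j,k,\ell}$ contains not only $(EF-FE)$ at level $k-1$ but also $(DF-FD)$ at level $k-1$. So \eqref{EF} and \eqref{DF} must be proved by \emph{simultaneous} induction; neither follows from the other by a symmetry argument alone, and neither can be closed off independently. The paper handles this by assuming both \eqref{EF} and \eqref{DF} hold at $k-1$ in the inductive step for \eqref{EF}. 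Also, the $|X|=0$ case of \eqref{EQ2} is not actually invoked here; the boundary terms are absorbed directly via the recurrences.

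Second, for \eqref{AF}: your plan to ``route the terms of the form $(E^{(t)}F^{(t)})$ or $(D^{(t)}F^{(t)})$ through \eqref{EF} and \eqref{DF}'' does not match what appears. Expanding $AF^{(t+1)}$ and $F^{(t)}A$ via \eqref{reca} and \eqref{recf2} produces only terms of the form $AF$, $FA$, $AD$, $DA$, and $A$ at level $k-1$---no $EF$ or $DF$ products. The $(DA-qAD)$ piece is handled by Lemma \ref{base_caseF}, and the $FA$ term it produces combines with the stray $AF$ and $FA$ pieces to form another $(AF-FA)$ at level $k-1$, to which the inductive hypothesis for \eqref{AF} itself applies (twice, in fact). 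Identities \eqref{EF} and \eqref{DF} are not used in this step.
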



Let us now prove Proposition \ref{EFDF} for $k=0$, and then for general $k$ by induction.

\subsubsection{The case $k=0$}


\begin{proof}

Note that from the combinatorial interpretation of $F$, we have 
that $F^{(t)}E^{(t)}_{i,j,0,\ell}=F^{(t)}D^{(t)}_{i,j,0,\ell}=
F^{(t)}A_{i,j,0,\ell}=AF^{(t)}_{i,j,0,\ell}=0$  for all $i, j$ and $\ell$.
(This is because when $k=0$, there are no horizontal $\alpha/\gamma$-strips
in the tableau we are building, so in particular there is no place to put
the $\beta$ and the $\delta$ in the vertical strip of type $F^{(t)}$.)

It is also easy to verify combinatorially (or directly from the matrices)
that 
\[
E^{(t)}F^{(t)}_{0,j,0,\ell}=\begin{cases}
\gamma\delta\beta q^{2t}(q^t+(\alpha+\gamma q^t)[t]_q) & {rm if}\ {j=1}\ {\rm and}\ {\ell=0}\\
0 & {\rm otherwise}.
\end{cases} 
\]

Now we use the base cases of the recurrences for $A$ and $E$: 
\[
A_{0,j,0,\ell}=\left\{
\begin{array}{ll}
1& {\rm if}\ j=0\ {\rm and}\ \ell=0\\
0 & {\rm otherwise.}\end{array}\right.
\]
\[E^{(t)}_{0,j,0,\ell}=\left\{
\begin{array}{ll}
\beta(q^t+(\alpha+\gamma q^t)[t]_q) & {\rm if}\ j=\ell=0\\
 \gamma q^{2t}& {\rm  if}\ j=0\ {\rm and}\ \ell=1\\
0 & {\rm otherwise.}\end{array}\right.
\]
A simple computation now yields the case $k=0$ of 
\eqref{EF} and \eqref{AF}. 
\end{proof}

\subsubsection{The case $k>0$}
We  prove \eqref{EF} by induction on $k$, supposing that 
\eqref{EF} and \eqref{DF} are true for $k-1$.
For simplicity, we again omit the superscripts $(t)$ of $E$ and $F$. 
Using 
Lemma \ref{comb-lemmaF}, and the fact that 
\begin{equation}
E_{i,j,k,\ell}=\beta(D+E)_{i,j,k-1,\ell}+qE_{i,j,k-1,\ell-1}, \label{rece1}
\end{equation}
we obtain
\begin{eqnarray*}
 EF_{0,j,k,\ell}&=&\sum_{u,v} E_{0,u,k,v} F_{u,j,v,\ell}\\
 &=&\sum_{u,v} \Big( \beta(D+E)_{0,u,k-1,v} F_{u,j,v,\ell}+qE_{0,u,k-1,v-1} F_{u,j,v,\ell} \Big) \\
 &=& \sum_{u,v} \Big( \beta(D+E)_{0,u,k-1,v} F_{u,j,v,\ell}+qE_{0,u,k-1,v-1}\delta E_{u,j-1,v-1,\ell} \Big. \\
 &&\Big. -qE_{0,u,k-1,v-1}\one_{\{j=u+1,\ v=\ell\}}\gamma\delta q^{2t+u+v-1}
 +qE_{0,u,k-1,v-1} F_{u,j,v-1,\ell-1} \Big)\\
&=& \beta(D+E)F_{0,j,k-1,\ell}+\delta q EE_{0,j-1,k-1,\ell} - \gamma\delta q^{2t+j+\ell-1}E_{0,j-1,k-1,\ell-1}+qEF_{0,j,k-1,\ell-1}.\\
\end{eqnarray*}

Similarly we get an expression for $FE$:
\begin{equation*}\label{recEF}
FE_{0,j,k,\ell} = \beta F(D+E)_{0,j,k-1,\ell}+\delta q EE_{0,j-1,k-1,\ell} - \gamma\delta q^{2t+k}E_{0,j-1,k,\ell}+qFE_{0,j,k-1,\ell-1}.
\end{equation*}


Putting the expressions for $EF$ and $FE$ together, we obtain
\begin{multline*}
(EF-FE)_{0,j,k,\ell} = \beta(EF-FE)_{0,j,k-1,\ell} + \beta(DF-FD)_{0,j,k-1,\ell} + q(EF-FE)_{0,j,k-1,\ell-1} \\
+\gamma\delta q^{2t+k}E_{0,j-1,k,\ell} - \gamma\delta q^{\ell+j+2t-1}E_{0,j-1,k-1,\ell-1}.
\end{multline*}

Using the induction hypothesis (that Proposition \ref{EFDF} holds for $k-1$),
we obtain:
\begin{multline*}
(1-q)(EF-FE)_{0,j,k,\ell} = \beta\Big( (1-q^j)
\underline{\underline{{\alpha\beta E_{0,j,k-1,\ell}}}}+\underline{{q^{2t}\gamma\delta(q^{\ell+j-1}-q^k)E_{0,j-1,k-1,\ell}}}\Big) \\
+ \beta\Big((1-q^j)\underline{\underline{{\alpha\beta D_{0,j,k-1,\ell}}}}+\underline{{q^{2t}\gamma\delta(q^{\ell+j-1}-q^k)D_{0,j-1,k-1,\ell}}}\Big) \\
+ q\Big((1-q^j)\underline{\underline{{\alpha\beta E_{0,j,k-1,\ell-1}}}}+\underline{{q^{2t}\gamma\delta(q^{\ell+j-2}-q^k)E_{0,j-1,k-1,\ell-1}}}\Big) \\
+(1-q)\gamma\delta q^{2t+k}E_{0,j-1,k,\ell} - \underline{(1-q)\gamma\delta q^{\ell+j+2t-1}E_{0,j-1,k-1,\ell-1}}. 
\end{multline*}

Now we group the terms according to the underlines above, 
and use the defining recurrences for 
$D_{i,j,k,\ell}$ and $E_{i,j,k,\ell}$ given in \eqref{reca} and \eqref{rece1}.  This gives
\begin{eqnarray*}
(1-q)(EF-FE)_{0,j,k,\ell} &=& \underline{\underline{{(1-q^j)\alpha\beta E_{0,j,k,\ell}}}} + 
\underline{q^{2t+\ell+j-1} \gamma \delta E_{0,j-1,k,\ell}}
- \underline{q^{2t+k} \gamma \delta E_{0,j-1,k,\ell}}\\
&&
+(1-q) \gamma \delta q^{2t+k} E_{0,j-1,k,\ell}\\
  &=& (1-q^j)\alpha\beta E_{0,j,k,\ell} +  q^{2t+\ell+j-1} \gamma \delta E_{0,j-1,k,\ell} -  q^{2t+k+1} \gamma \delta E_{0,j-1,k,\ell},
\end{eqnarray*}
which completes the proof of 
\eqref{EF}.

We now prove the identity for $AF-FA$. 
%
Using the recurrences of  \eqref{reca} and \eqref{recf2}, 
we obtain the following:
\begin{align*}
AF_{0,j,k,\ell} &= \delta q^2 AF_{0,j-1,k-1,\ell}+\beta AF_{0,j,k-1,\ell}+q^2 AF_{0,j,k-1,\ell-1} + \beta q AD_{0,j,k-1,\ell}-\alpha\beta q^{j+1} A_{0,j,k-1,\ell-1},\\
FA_{0,j,k,\ell} &= \delta q^2 FA_{0,j-1,k-1,\ell}+\beta q FA_{0,j,k-1,\ell}+q^2 FA_{0,j,k-1,\ell-1} + \beta DA_{0,j,k-1,\ell}-\alpha\beta A_{0,j,k,\ell}.
\end{align*}
We have that 
\begin{eqnarray*}
(1-q)(AF-FA)_{0,j,k,\ell} &=&
 \delta q^2((1-q)(AF - FA)_{0,j-1,k-1,\ell}) 
+\beta(1-q)AF_{0,j,k-1,\ell}\\ 
&-& \beta q(1-q)FA_{0,j,k-1,\ell}+q^2((1-q)(AF-FA)_{0,j,k-1,\ell-1})\\
&-& (1-q)\beta (DA-q AD)_{0,j,k-1,\ell} + (1-q)\alpha \beta (A_{0,j,k,\ell} - q^{j+1} A_{0,j,k-1,\ell-1}).
\end{eqnarray*}
We now use the induction hypothesis along with Lemma \ref{base_caseF}.
We then find
that $(1-q)(AF-FA)_{0,j,k,\ell}$ is equal to:
%
\begin{eqnarray*}
&& \delta q^2 \left((1-q^{j-1})\alpha\beta A_{0,j-1,k-1,\ell}+\gamma\delta q^{2t}(q^{\ell+j}-q^{k+1})A_{0,j-2,k-1,\ell} \right)\\
&+&\beta(1-q)AF_{0,j,k-1,\ell} - \underline{\beta q(1-q)FA_{0,j,k-1,\ell}}\\
&+&q^2 \left( (1-q^j)\alpha\beta A_{0,j,k-1,\ell-1}+\gamma\delta q^{2t}(q^{\ell+j}-q^{k+1})A_{0,j-1,k-1,\ell-1} \right)\\
&-& \underline{(1-q)\beta} \left(\alpha\beta A_{0,j,k-1,\ell}-\gamma\delta q^{2t+k+1}A_{0,j-1,k-1,\ell}+\underline{(1-q) (F A)_{0,j,k-1,\ell}}\right)\\
 &+& (1-q)\alpha\beta (A_{0,j,k,\ell} - q^{j+1} A_{0,j,k-1,\ell-1}).
\end{eqnarray*}
We group the terms according to the underlines, obtaining:
\begin{eqnarray*}
&&\delta q^2 \left((1-q^{j-1})\alpha\beta A_{0,j-1,k-1,\ell}+\gamma\delta q^{2t}(q^{\ell+j}-q^{k+1})A_{0,j-2,k-1,\ell} \right)\\
&+&\beta((1-q)(AF-FA)_{0,j,k-1,\ell}) \\
&+&q^2 \left( (1-q^j)\alpha\beta A_{0,j,k-1,\ell-1}+\gamma\delta q^{2t}(q^{\ell+j}-q^{k+1})A_{0,j-1,k-1,\ell-1} \right)\\
&-& (1-q)\beta \left(\alpha\beta A_{0,j,k-1,\ell}-\gamma\delta q^{2t+k+1}A_{0,j-1,k-1,\ell}\right)\\
 &+& (1-q)\alpha\beta (A_{0,j,k,\ell} - q^{j+1} A_{0,j,k-1,\ell-1}).
\end{eqnarray*}
We use again the induction hypothesis to get:
\begin{eqnarray*}
&&\delta q^2 \left((1-q^{j-1})\alpha\beta A_{0,j-1,k-1,\ell}+\gamma\delta q^{2t}(q^{\ell+j}-q^{k+1})A_{0,j-2,k-1,\ell} \right)\\
&+&\beta\left((1-q^j){\alpha\beta A_{0,j,k-1,\ell}}+\gamma\delta q^{2t}(q^{\ell+j+1}-{q^{k+1})A_{0,j-1,k-1,\ell}}\right) \\
&+&q^2 \left( (1-q^j){\alpha\beta A_{0,j,k-1,\ell-1}}+\gamma\delta q^{2t}(q^{\ell+j}-q^{k+1})A_{0,j-1,k-1,\ell-1} \right)\\
&-& (1-q)\beta \left({\alpha\beta A_{0,j,k-1,\ell}}-{\gamma\delta q^{2t+k+1}A_{0,j-1,k-1,\ell}}\right)\\
 &+& (1-q)(\alpha\beta A_{0,j,k,\ell} -{\alpha\beta q^{j+1} A_{0,j,k-1,\ell-1}}).
\end{eqnarray*}
%
%
We combine lines 2 and 4 and then lines 3 and 5 to get:
\begin{eqnarray*}
&&\delta q^2 \left((1-q^{j-1})\alpha\beta \underline{A_{0,j-1,k-1,\ell}}+\gamma\delta q^{2t}(q^{\ell+j}-q^{k+1})\underline{\underline{A_{0,j-2,k-1,\ell}}} \right)\\
&+&\beta q((1-q^{j-1})\alpha\beta \underline{A_{0,j,k-1,\ell}}+\gamma\delta q^{2t}(q^{\ell+j}-q^{k+1})\underline{\underline{A_{0,j-1,k-1,\ell}}}) \\
&+&q^2 \left( (1-q^{j-1})\alpha\beta \underline{A_{0,j,k-1,\ell-1}}+\gamma\delta q^{2t}(q^{\ell+j}-q^{k+1})\underline{\underline{A_{0,j-1,k-1,\ell-1}}} \right)
 + (1-q)\alpha\beta A_{0,j,k,\ell}
\end{eqnarray*}
We apply  \eqref{reca} to the underlined terms, and obtain:
\begin{eqnarray*}
&&\alpha\beta q (1-q^{j-1}) \underline{A_{0,j,k,\ell}} +\gamma\delta q^{2t+1}(q^{\ell+j}-q^{k+1})\underline{\underline{A_{0,j-1,k,\ell}}} 
 + (1-q)\alpha\beta A_{0,j,k,\ell}\\
 &=& \alpha\beta (1-q^{j}) A_{0,j,k,\ell} +\gamma\delta q^{2t+1}(q^{\ell+j}-q^{k+1})A_{0,j-1,k,\ell},
\end{eqnarray*}
as desired. This completes the proof of Proposition \ref{EFDF}.

\subsection{The inductive proof of Theorem \ref{THM24}}
\label{induction}

We now prove Theorem \ref{THM24} by induction on $|X|$.
We already verified the base case ($|X|=0$) in Section
\ref{xzero}.

\subsubsection{The inductive proof of  \eqref{EQ2}}


We need to prove that for $X$ an $(A,t)$-compatible word, 
we have \eqref{Eq2} below.
\begin{equation}
\beta XD_{0,j,k,\ell} - \delta XE _{0,j-1,k,\ell} = \alpha\beta X_{0,j,k,\ell-1}-\gamma\delta q^{2t+k+\| X\|}X_{0,j-1,k,\ell-1}+(1-q) FX_{0,j,k,\ell-1}.
\label{Eq2}
\end{equation}
Note that by Lemma \ref{ij_reduction}, \eqref{Eq2} is equivalent to 
\begin{equation}
\beta XD_{i,j,k,\ell} - \delta XE _{i,j-1,k,\ell} = \alpha\beta q^i X_{i,j,k,\ell-1}-\gamma\delta q^{2t+k+i+\| X\|}X_{i,j-1,k,\ell-1}+(1-q) FX_{i,j,k,\ell-1}.
\label{Eq2-general}
\end{equation}
For our proof, we will show that if \eqref{Eq2} holds for 
$X=Y$, where $Y$ is $(A,t)$-compatible, then \eqref{Eq2} also holds for
$X=DY$, $X=EY$, and $X=AY$. 

We start by showing that after plugging $X=DY$ into \eqref{Eq2}, the 
left-hand side (LHS) will match the right-hand side (RHS).
On the LHS we have:
\begin{multline*}
\beta(DYD)_{0,j,k,\ell} - \delta(DYE)_{0,j-1,k,\ell}\\
\begin{aligned}
&= \sum_{u,v}D_{0,u,k,v} 
\left(\beta YD_{u,j,v,\ell} - \delta YE_{u,j-1,v,\ell}\right)\\
&=\sum_{u,v} D_{0,u,k,v} \left(\alpha\beta q^u Y_{u,j,v,\ell-1}
-\gamma\delta q^{2t+v+u+\| Y\|}Y_{u,j-1,v,\ell-1}+(1-q)FY_{u,j,v,\ell-1}
\right)\\
&= \sum_{u,v} \alpha\beta q^u D_{0,u,k,v}Y_{u,j,v,\ell-1}
-\sum_{u,v}\gamma\delta q^{2t+v+u}
D_{0,u,k,v} Y_{u+1,j,v,\ell-1}
+\sum_{u,v}(1-q)D_{0,u,k,v} FY_{u,j,v,\ell-1}.
\end{aligned}
\end{multline*}
Here we used the induction hypothesis 
for \eqref{Eq2-general}
to obtain the third line from the second, and we 
used Lemma \ref{ij_reduction} in the second term
to obtain the fourth line from the third.
Now in the fourth line, 
we change our index of summation in the second sum, replacing each $u$ by $u-1$.
Also note that $\sum_{u,v} D_{0,u,k,v} FY_{u,j,v,\ell-1} = 
\sum_{u,v} DF_{0,u,k,v} Y_{u,j,v,\ell-1}$.
Thus we obtain that the LHS equals
\[
\sum_{u,v} \left(\alpha\beta q^u D_{0,u,k,v} - \gamma\delta q^{2t+v+u-1}D_{0,u-1,k,v} +(1-q)DF_{0,u,k,v}\right) Y_{u,j,v,\ell-1}.
\]
The RHS for $X=DY$ is obtained in a similar way:
\begin{multline*}
\alpha\beta DY_{0,j,k,\ell-1} - \gamma\delta q^{2t+k+\|DY\|}DY_{0,j-1,k,\ell-1}+(1-q)FDY_{0,j,k,\ell-1}\\
\begin{aligned}
&= \sum_{u,v} D_{0,u,k,v} \left(\alpha\beta Y_{u,j,v,\ell-1} - \gamma\delta q^{2t+k+\|DY\|}Y_{u,j-1,v,\ell-1}\right) 
+\sum_{u,v}(1-q)FD_{0,u,k,v}Y_{u,j,v,\ell-1}\\
&= \sum_{u,v} \left(\alpha\beta D_{0,u,k,v} - \gamma\delta q^{2t+k+\| Y\|+1-\| Y\|}D_{0,u-1,k,v} +(1-q)FD_{0,u,k,v}\right)Y_{u,j,v,\ell-1}.
\end{aligned}
\end{multline*}

Equation \eqref{DF}  of Proposition \ref{EFDF}  implies that
\begin{multline*}
\alpha\beta q^u D_{0,u,k,v} - \gamma\delta q^{2t+v+u-1}D_{0,u-1,k,v} +(1-q)DF_{0,u,k,v} \\
= \alpha\beta D_{0,u,k,v} - \gamma\delta q^{2t+k+1}D_{0,u-1,k,v} +(1-q)FD_{0,u,k,v},
\end{multline*}
which means the LHS equals the RHS of \eqref{Eq2} for $X=DY$.

If $X=EY$, the proof is the same. The induction is equivalent to proving that
\begin{multline*}
\alpha\beta q^u E_{0,u,k,v} - \gamma\delta q^{2t+v+u-1}E_{0,u-1,k,v} +(1-q)EF_{0,u,k,v} \\
= \alpha\beta E_{0,u,k,v} - \gamma\delta q^{2t+k+1}E_{0,u-1,k,v} +(1-q)FE_{0,u,k,v},
\end{multline*}
which is implied by  \eqref{EF}.
Consequently the LHS equals the RHS of \eqref{Eq2} for $X=EY$.

Finally we consider the case
 $X=AY$, where $X$ is an $(A,t)$-compatible word (so $Y$ is 
an $(A,t+1)$-compatible word).  The proof is nearly the same
as before.  We have that the LHS of \eqref{Eq2} equals:

\begin{multline*}
\beta AYD_{0,j,k,\ell} - \delta AYE_{0,j-1,k,\ell}\\
\begin{aligned}
&= \sum_{u,v}A_{0,u,k,v} (\beta YD_{u,j,v,\ell} - \delta YE_{u,j-1,v,\ell})\\
&=\sum_{u,v} A_{0,u,k,v} (\alpha\beta q^u Y_{u,j,v,\ell-1}
-\gamma\delta q^{2(t+1)+v+u+\| Y\|}Y_{u,j-1,v,\ell-1}+(1-q)FY_{u,j,v,\ell-1})\\
&=\sum_{u,v} \alpha\beta q^u A_{0,u,k,v}Y_{u,j,v,\ell-1}
-\sum_{u,v}\gamma\delta q^{2t+2+v+u}A_{0,u,k,v} Y_{u+1,j,v,\ell-1}
+\sum_{u,v}(1-q) AF_{0,u,k,v} Y_{u,j,v,\ell-1}.
\end{aligned}
\end{multline*}
In the second sum on the last line, we now change the index of summation,
replacing each $u$ by $u-1$.
Thus we obtain that the LHS of \eqref{Eq2} equals
\[
\sum_{u,v} (\alpha\beta q^u A_{0,u,k,v} - \gamma\delta q^{2t+v+u+1}A_{0,u-1,k,v} +(1-q)AF_{0,u,k,v}) Y_{u,j,v,\ell-1}.
\]
We obtain the RHS of \eqref{Eq2} for $X=AY$ in the same way:
\begin{multline*}
\alpha\beta AY_{0,j,k,\ell-1} - \gamma\delta q^{2t+k+\|AY\|} AY_{0,j-1,k,\ell-1}+(1-q) FAY_{0,j,k,\ell-1}\\
\begin{aligned}
&= \sum_{u,v} A_{0,u,k,v} \left(\alpha\beta Y_{u,j,v,\ell-1} - \gamma\delta q^{2t+k+\|AY\|}Y_{u,j-1,v,\ell-1}\right) +\sum_{u,v}(1-q) FA_{0,u,k,v}Y_{u,j,v,\ell-1}\\
&= \sum_{u,v} \left(\alpha\beta A_{0,u,k,v} - \gamma\delta q^{2t+k+\| Y\|+2-\| Y\|}A_{0,u-1,k,v} +(1-q) FA_{0,u,k,v}\right)Y_{u,j,v,\ell-1}.
\end{aligned}
\end{multline*}
Equation \eqref{AF}  of Proposition \ref{EFDF} implies that
\begin{multline*}
\alpha\beta q^u A_{0,u,k,v} - \gamma\delta q^{2t+v+u+1}A_{0,u-1,k,v} +(1-q) AF_{0,u,k,v} \\
= \alpha\beta A_{0,u,k,v} - \gamma\delta q^{2t+k+2}A_{0,u-1,k,v} +(1-q)FA_{0,u,k,v},
\end{multline*}
which means the LHS equals the RHS of \eqref{Eq2} for $X=AY$.

This completes the proof of   \eqref{EQ2}.

\subsubsection{The inductive proof of \eqref{EQ4}}

We want to prove that for any $X$ that is $(A,t)$-compatible,
\begin{equation}
\label{Eq4}
XDA_{0,j,k,\ell}-qXAD_{0,j,k,\ell}=\alpha\beta XA_{0,j,k,\ell}-\gamma\delta q^{2t+k+2+\| X\|}XA_{0,j-1,k,\ell}+(1-q)FXA_{0,j,k,l}.
\end{equation}
Note that by Lemma \ref{ij_reduction}, \eqref{Eq4} is equivalent to 
\begin{equation}
XDA_{i,j,k,\ell}-qXAD_{i,j,k,\ell}=\alpha\beta q^i XA_{i,j,k,\ell}-\gamma\delta q^{2t+i+k+2+\| X\|}XA_{i,j-1,k,\ell}+(1-q)FXA_{i,j,k,l}.
\label{Eq4-general}
\end{equation}

We know  that \eqref{Eq4} holds for $|X|=0$ by Lemma \ref{base_caseF}. 
For the inductive step, we will show that if \eqref{Eq4} holds for 
$X=Y$,  then \eqref{Eq4} also holds for
$X=AY$, $X=DY$, and $X=EY$. 


When  $X=AY$, the LHS of \eqref{Eq4} becomes
\begin{eqnarray*}
AYDA_{0,j,k,\ell}-qAYAD_{0,j,k,\ell}
&=&\sum_{u,v} A_{0,u,k,v} (YDA_{u,j,v,\ell}-qYAD_{u,j,v,\ell})\\
&=&\sum_{u,v} A_{0,u,k,v} q^{u(\| Y\|+3)}(YDA_{0,j-u,v,\ell}-qYAD_{0,j-u,v,\ell})\\
\end{eqnarray*}
where we applied Lemma \ref{ij_reduction} to $YDA$ and $YAD$ to get the 
second equality from the first.  
Note that since $X=AY$ is $(A,t)$-compatible, $Y$ must be 
$(A,t+1)$-compatible.
We now use the induction hypothesis 
\eqref{Eq4-general} to obtain that when $X=AY$, 
the LHS of \eqref{Eq4} 
equals
%
\begin{align*}
&=\sum_{u,v} A_{0,u,k,v} \left(\alpha \beta q^u YA_{u,j,v,\ell}
-\gamma \delta q^{2(t+1)+u+v+2+\|Y\|} YA_{u,j-1,v,\ell} 
+(1-q) FYA_{u,j,v,\ell} \right)\\
&=\sum_{u,v} \left( \alpha\beta q^{u} A_{0,u,k,v} YA_{u,j,v,\ell}-\gamma\delta q^{2t+4+u+v+\| Y\|} A_{0,u,k,v} YA_{u,j-1,v,\ell}
+(1-q)AF_{0,u,k,v}YA_{u,j,v,\ell} \right) \nonumber\\
&=\sum_{u,v} \big( \alpha\beta q^{u} A_{0,u,k,v} YA_{u,j,v,\ell}-\gamma\delta q^{2t+4+u+v+\| Y\|-\|YA\|} A_{0,u,k,v} YA_{u+1,j,v,\ell} \big. 
\label{eqAYDA2}\\
& \big. \qquad\qquad\qquad +(1-q)AF_{0,u,k,v}YA_{u,j,v,\ell} \big) \nonumber\\
&=\sum_{u,v} \left(\alpha \beta q^u A_{0,u,k,v}-\gamma\delta q^{2t+u+v+1} A_{0,u-1,k,v}
+(1-q)AF_{0,u,k,v}\right)YA_{u,j,v,\ell}. \nonumber
\end{align*}
To go from the first to the second line above, we used the fact that 
$\sum_{u,v} A_{0,u,k,v} FYA_{u,j,v,\ell} = \sum_{u,v} AF_{0,u,k,v} YA_{u,j,v,\ell}$.  
To go from the second to the third line, we used Lemma \ref{ij_reduction}
in the second sum.
And to go from the third to the fourth line, we changed the 
index of summation in the second term, replacing each $u$ by $u-1$.


Similarly, when $X=AY$, the RHS of  \eqref{Eq4} is
\begin{multline*}
\alpha\beta AYA_{0,j,k,\ell}-\gamma\delta q^{2t+k+2+\| Y\|+2}AYA_{0,j-1,k,\ell}+(1-q)FAYA_{0,j,k,l}\\
\begin{aligned}
&= \sum_{u,v} \big(\alpha \beta A_{0,u,k,v}YA_{u,j,v,\ell}-\gamma\delta q^{2t+k+4+\| Y\|}A_{0,u,k,v}YA_{u,j-1,v,\ell}+(1-q)FA_{0,u,k,v}YA_{u,j,v,\ell}\big)\\
&= \sum_{u,v} \big( \alpha \beta A_{0,u,k,v}YA_{u,j,v,\ell}-\gamma\delta q^{2t+k+4+\| Y\|-\| YA\|}A_{0,u,k,v}YA_{u+1,j,v,\ell}+(1-q)FA_{0,u,k,v}YA_{u,j,v,\ell}\big)\\
&=\sum_{u,v} \big(\alpha \beta A_{0,u,k,v}-\gamma\delta q^{2t+k+2} A_{0,u-1,k,v}
+(1-q)FA_{0,u,k,v}\big)YA_{u,j,v,\ell}.
\end{aligned}
\end{multline*}
Consequently, if we can show that
{\small \[
\alpha \beta q^u A_{0,u,k,v}-\gamma\delta q^{2t+u+v+1} A_{0,u-1,k,v}
+(1-q)AF_{0,u,k,v}=
\alpha \beta A_{0,u,k,v}-\gamma\delta q^{2t+k+2} A_{0,u-1,k,v}
+(1-q)FA_{0,u,k,v},
\]
}
 then the LHS and RHS of \eqref{Eq4} will be equal. 
But the equation above is exactly  \eqref{AF} from Proposition \ref{EFDF}. Therefore if  \eqref{Eq4} is true for $X=Y$, it is also true for $X=AY$.

Let us now prove  \eqref{Eq4} for $X=DY$. The LHS of \eqref{Eq4} is
\begin{equation*}
DYDA_{0,j,k,\ell}-qDYAD_{0,j,k,\ell}
=\sum_{u,v} D_{0,u,k,v} (YDA_{u,j,v,\ell}-qYAD_{u,j,v,\ell}).
\end{equation*}
Using the induction hypothesis and applying \eqref{Eq4-general}, we get
\begin{align*}
&=\sum_{u,v} D_{0,u,k,v} \big(\alpha\beta q^{u} YA_{u,j,v,\ell}
-\gamma\delta q^{2t+v+u+2+\|Y\|} YA_{u,j-1,v,\ell}
 +(1-q)FYA_{u,j,v,\ell}\big)\\
&=\sum_{u,v} \big(\alpha\beta q^u D_{0,u,k,v} YA_{u,j,v,\ell}-\gamma\delta 
q^{2t+u+v} D_{0,u,k,v} YA_{u+1,j,v,\ell} 
+(1-q)DF_{0,u,k,v}YA_{u,j,v,\ell} \big)\\
&=\sum_{u,v} \big(\alpha \beta q^u D_{0,u,k,v}-\gamma\delta q^{2t+u-1+v} D_{0,u-1,k,v}
+(1-q)DF_{0,u,k,v}\big) YA_{u,j,v,\ell}.
\end{align*}

When $X=DY$, the RHS of \eqref{Eq4} is 
\begin{multline*}
\alpha\beta DYA_{0,j,k,\ell}-\gamma\delta q^{2t+k+2+\| Y\|+1}DYA_{0,j-1,k,\ell}+(1-q)FDYA_{0,j,k,\ell}\\
\begin{aligned}
&= \sum_{u,v} \big( \alpha \beta D_{0,u,k,v}YA_{u,j,v,\ell}-\gamma\delta q^{2t+k+3+\| Y\|}D_{0,u,k,v}YA_{u,j-1,v,\ell}+(1-q)FD_{0,u,k,v}YA_{u,j,v,\ell}\big)\\
&= \sum_{u,v} \big( \alpha \beta D_{0,u,k,v}YA_{u,j,v,\ell}-\gamma\delta q^{2t+k+3+\|Y\|-\|YA\|}D_{0,u,k,v}YA_{u+1,j,v,\ell}+(1-q)FD_{0,u,k,v}YA_{u,j,v,\ell}\big)\\
&=\sum_{u,v} \big(\alpha \beta D_{0,u,k,v}-\gamma\delta q^{2t+k+1} D_{0,u-1,k,v}
+(1-q)FD_{0,u,k,v}\big)YA_{u,j,v,\ell}.
\end{aligned}
\end{multline*}

Thus, if we can show that 
{\small
\[\alpha \beta q^u D_{0,u,k,v}-\gamma\delta q^{2t+u+v-1} D_{0,u-1,k,v}
+(1-q)DF_{0,u,k,v}=
\alpha \beta D_{0,u,k,v}-\gamma\delta q^{2t+k+1} D_{0,u-1,k,v}
+(1-q)FD_{0,u,k,v},
\]
}
then it follows that for $X=DY$, the LHS of \eqref{Eq4} equals the RHS. 
But the above equation is exactly \eqref{DF} from Proposition \ref{EFDF}. Therefore, if \eqref{Eq4} is true for $X=Y$, it is also true for $X=DY$.

The proof for $X=EY$ follows the same steps as the proof for $X=DY$. This completes the proof of Theorem \ref{THM24}.

\section{The proof of our formula for Koornwinder moments
}\label{sec:momentproof}

Our goal in this section is to prove 
Theorem \ref{thm:moment}.  Note that it is a direct consequence of 
Theorem \ref{thm:CW-K} 
and Lemma \ref{lem:2partition} 
below.

\begin{definition}
Let $\DD$, $\EE$, and $\AAA$ be matrices, and 
 $\langle \WW|$ and $|\VV\rangle$ be vectors, such that the following relations hold:
\begin{enumerate}
\item $\langle \WW| (\alpha \EE-\gamma \DD) = \langle \WW|$ \label{eq:m1}
\item $(\beta \DD-\delta \EE)|\VV\rangle = |\VV\rangle$
\item  $\DD\EE-q\EE\DD = \DD+\EE$
\item $\DD\AAA = q\AAA\DD+\AAA$
\item $\AAA\EE = q\EE\AAA+\AAA$. \label{eq:m5}
\end{enumerate}
(By work of \cite{USW} and \cite{Uchiyama} we know that such matrices and vectors exist;
see also \cite[Section 3.4]{CW-Koornwinder}.)
We then define $\mathcal{Z}_{N,r}(\xi) = [y^r] \frac{\langle \WW| (\xi \DD+\EE+y\AAA)^N |V \rangle}{\langle \WW| \AAA^r|\VV \rangle}$.
\end{definition}

\begin{remark}
Note that if we have $\DD, \EE, \AAA, \langle \WW|$, and 
$|\VV \rangle$ as above, we can set 
$D^{(t)} = \DD$  
and $E^{(t)} = \EE$ for all $t$, 
and set $A = \AAA$, $\langle W| = \langle \WW|$, and 
$|V\rangle = |\VV\rangle$.  Then we have a solution to the 
relations (I) through (V) of Theorem \ref{thm:MA}, with 
$\lambda_n = 1$ for all $n$.
\end{remark}

The main result of \cite{CW-Koornwinder} was the following.
\begin{theorem}\label{thm:CW-K}\cite[Theorem 1.1]{CW-Koornwinder}
The 
Koornwinder moment
$K_{(N-r,0,0,\dots,0)}(\xi)$ (where there are precisely $r$ $0$'s in the partition)
is proportional to the \emph{fugacity partition function}
$\mathcal{Z}_{N,r}(\xi)$ for
the two-species ASEP on a lattice of $N$ sites with $r$ light particles.
More specifically,
$$K_{(N-r,0,0,\dots,0)}(\xi) = \frac{1}{(1-q)^r} \mathcal{Z}_{N,r}(\xi).$$
\end{theorem}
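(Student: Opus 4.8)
The plan is to prove the stated identity by reducing both sides to determinants of single-species quantities $\mathcal{Z}_n(\xi)$ and matching them. By Definition \ref{def:Koornwindermoment}, with $m=r+1$ and $\lambda=(N-r,0,\dots,0)$, the left-hand side $K_{(N-r,0,\dots,0)}(\xi)$ is already the ratio of determinants \eqref{def:moment} whose entries are the single-index partition functions $\mathcal{Z}_n(\xi)$; its numerator matrix agrees with the denominator matrix $(\mathcal{Z}_{2m-i-j})$ except in the top row, which is $(\mathcal{Z}_{N+r+1-j})_{j=1}^{m}$. These $\mathcal{Z}_n(\xi)$ are, by \cite{USW} and \cite{Uchiyama}, exactly the Askey--Wilson moments, computed by the single-species matrix product $\mathcal{Z}_n(\xi)=\langle\WW|(\xi\DD+\EE)^n|\VV\rangle$ attached to the bulk and boundary relations (1)--(3). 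So it suffices to establish the determinantal identity $\mathcal{Z}_{N,r}(\xi)=(1-q)^r K_{(N-r,0,\dots,0)}(\xi)$ at the level of the Matrix Ansatz.

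First I would rewrite the two $\AAA$-relations (4) and (5) in the form that pushes $\AAA$ leftward: $\DD\AAA=\AAA(q\DD+I)$ and $\EE\AAA=\tfrac1q\AAA(\EE-I)$, so that formally conjugation by $\AAA$ sends $\DD\mapsto q\DD+I$ and $\EE\mapsto\tfrac1q(\EE-I)$; write $\phi$ for this map and $C_s:=\phi^s(\xi\DD+\EE)=q^s\xi\DD+q^{-s}\EE+[s]_q(\xi-q^{-s})I$. Expanding $[y^r]\langle\WW|(\xi\DD+\EE+y\AAA)^N|\VV\rangle$ as a sum over the $\binom{N}{r}$ placements of the $r$ copies of $\AAA$ into blocks $B^{a_0}\AAA B^{a_1}\AAA\cdots\AAA B^{a_r}$ with $B=\xi\DD+\EE$ and $a_0+\cdots+a_r=N-r$, I would push every $\AAA$ to the far left using the two relations above (no invertibility is needed, since each relation already has $\AAA$ on the left of its right-hand side). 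Each block $B^{a_i}$ is conjugated once by $\phi$ for every $\AAA$ originally to its right, turning it into $\phi^{r-i}(B)^{a_i}=C_{r-i}^{a_i}$. This collapses the numerator of $\mathcal{Z}_{N,r}(\xi)$ into $\langle\WW|\AAA^r\sum_{a_0+\cdots+a_r=N-r}C_r^{a_0}C_{r-1}^{a_1}\cdots C_0^{a_r}|\VV\rangle$, that is, $\langle\WW|\AAA^r$ times a non-commutative complete homogeneous sum of degree $N-r$ in the $r+1$ ordered operators $C_r,\dots,C_0$.

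The heart of the argument is then to evaluate this non-commutative homogeneous sum as a determinant. Because the $C_s$ are explicit affine combinations of $\DD,\EE,I$ differing only by the recorded $q$-scalings, and because $\DD,\EE$ obey the bulk relation (3) together with the boundary relations (1)--(2), I would establish a Jacobi--Trudi / Lindström--Gessel--Viennot expansion: sandwiched between $\langle\WW|\AAA^r$ and $|\VV\rangle$, the homogeneous sum becomes a single determinant whose $(i,j)$ entry is a $q$-power multiple of a single-species partition function $\langle\WW|(\xi\DD+\EE)^{n_{ij}}|\VV\rangle=\mathcal{Z}_{n_{ij}}(\xi)$, with the indices $n_{ij}$ matching those of the numerator determinant in \eqref{def:moment}. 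Dividing by the denominator $\langle\WW|\AAA^r|\VV\rangle$ and identifying it (again via the $\AAA$-relations) with $\det(\mathcal{Z}_{2m-i-j})$ then produces the ratio $K_{(N-r,0,\dots,0)}(\xi)$, while the scalars accumulated from the $r$ rounds of $\phi$-conjugation collect into the prefactor $(1-q)^r$.

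I expect the main obstacle to be exactly this determinantal collapse: converting the non-commutative homogeneous sum in the $C_s$ into a genuine determinant of scalar partition functions and bookkeeping the powers of $q$ and the $(1-q)^r$ normalization so that they match \eqref{def:moment} and $\det(\mathcal{Z}_{2m-i-j})$ on the nose. As an independent check on the normalization, I would use the analytic route: at $q=t$ the Koornwinder density factors as $\prod_i w_{\mathrm{AW}}(x_i)$ times a squared Vandermonde, so integrating $s_{(N-r)}=h_{N-r}$ against it and applying the Andréief (Heine) identity together with the bialternant formula for Schur functions yields directly a determinant of Askey--Wilson moments, reproducing \eqref{def:moment} and confirming the scalar obtained from the matrix computation.
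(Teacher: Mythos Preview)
This theorem is not proved in the present paper: it is quoted verbatim as \cite[Theorem 1.1]{CW-Koornwinder}, and the paper only uses it as a black box (together with Lemma \ref{lem:2partition}) to derive Theorem \ref{thm:moment}. So there is no ``paper's own proof'' to compare against here. The introduction does indicate that the argument in \cite{CW-Koornwinder} relies on the Matrix Ansatz together with the combinatorics of Motzkin paths, which is not the route you are taking.

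On your proposal itself: the initial algebra is correct. Pushing every $\AAA$ to the far left via $B\AAA=\AAA\,\phi(B)$ does yield $\langle\WW|\AAA^r\,C_r^{a_0}C_{r-1}^{a_1}\cdots C_0^{a_r}|\VV\rangle$, and your formula for $C_s$ is right. But from that point on the proposal is only an outline, not a proof. The two key assertions --- (i) that the non-commutative complete homogeneous sum in the $C_s$, sandwiched between $\langle\WW|\AAA^r$ and $|\VV\rangle$, collapses to a determinant whose entries are the single-species $\mathcal{Z}_n(\xi)$ with exactly the index pattern of the numerator in \eqref{def:moment}, and (ii) that $\langle\WW|\AAA^r|\VV\rangle$ coincides with the denominator determinant $\det(\mathcal{Z}_{2m-i-j})$ --- are precisely the content of the theorem, and you give no mechanism for either. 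Invoking ``Jacobi--Trudi / LGV'' is not enough: LGV produces determinants from sums over \emph{non-intersecting} path families, and nothing in your setup isolates a non-intersection condition or even explains how $\langle\WW|\AAA^r\,\cdot\,|\VV\rangle$ factors into a product of one-dimensional functionals to which a determinant could apply. Likewise, ``identifying $\langle\WW|\AAA^r|\VV\rangle$ with $\det(\mathcal{Z}_{2m-i-j})$ via the $\AAA$-relations'' hides the same difficulty. Your final analytic remark (Andr\'eief applied to the factored $q=t$ Koornwinder density) is a genuine and clean argument, but it establishes the determinantal formula \eqref{def:moment} for $M_\lambda$ in terms of Askey--Wilson moments, which is already part of Definition \ref{def:Koornwindermoment}; it does not by itself connect to the two-species quantity $\mathcal{Z}_{N,r}(\xi)$.
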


We now need to relate $\mathcal{Z}_{N,r}(\xi)$ to
$\mathbf{Z}_{N,r}(\xi)$, the generating function for rhombic 
staircase tableaux.

\begin{lemma}\label{lem:2partition}
Recall that  
by Theorem \ref{thm:comb-ansatz}, 
the matrices $D^{(t)}, E^{(t)}, A$, and vectors $\langle W|$ and $|V\rangle$ 
satisfy relations (I) through (V) of 
Theorem \ref{thm:MA}, with $\lambda_n = \alpha \beta - q^{n-1} \gamma \delta$.
Let $X$ be an $A$-compatible word in $D^{(t)}, E^{(t)}$ and $A$, 
with $|X|=N$ and $|X|_A = r$.
Let $\mathcal{X}$ be the corresponding word in 
$\DD, \EE,$ and $\AAA$.  For example,
if $X = D^{(0)} A A E^{(2)} A D^{(3)}$, then 
$\mathcal{X} = DAAEAD$.
Then 
$$\frac{\langle \WW | \mathcal{X} | \VV \rangle}{\langle \WW | \AAA^r | \VV\rangle}   = 
\frac{\langle W | X | V \rangle}{\langle W|A^r |V \rangle}
\prod_{i=0}^{N-r-1} (\alpha\beta - q^{i+2r} \gamma \delta)^{-1}. $$
It follows that 
 $$\mathcal{Z}_{N,r}(\xi) = 
\mathbf{Z}_{N,r} (\xi)
\prod_{i=0}^{N-r-1} (\alpha\beta - q^{i+2r} \gamma \delta)^{-1}. $$
\end{lemma}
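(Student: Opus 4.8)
The plan is to deduce the first displayed identity from two facts: that the ratio $\langle W|X|V\rangle/\langle\WW|\mathcal{X}|\VV\rangle$ depends only on $N$ and $r$ (not on the particular word $X$), and that it satisfies a simple one-step recursion in $N$; the second identity then follows by expanding a matrix power. For the first fact I would argue that numerator and denominator are constant multiples of the \emph{same} stationary probability. By Theorem~\ref{thm:comb-ansatz} the matrices $D^{(t)},E^{(t)},A$ and vectors $\langle W|,|V\rangle$ satisfy relations (I)--(V), so Theorem~\ref{thm:MA} gives $\langle W|X|V\rangle=Z_{N,r}\,\pi(\tau(X))$, where $\pi$ is the unique stationary distribution of the two-species ASEP on $B_{N,r}$ and $Z_{N,r}=\sum_X\langle W|X|V\rangle$. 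As observed in the remark after the definition of $\mathcal{Z}_{N,r}(\xi)$, taking $D^{(t)}=\DD$, $E^{(t)}=\EE$, $A=\AAA$ yields a solution of (I)--(V) with all $\lambda_n=1$, so Theorem~\ref{thm:MA} likewise gives $\langle\WW|\mathcal{X}|\VV\rangle=Z'_{N,r}\,\pi(\tau(X))$ with $Z'_{N,r}=\sum_X\langle\WW|\mathcal{X}|\VV\rangle$. Hence $\langle W|X|V\rangle=h(N,r)\,\langle\WW|\mathcal{X}|\VV\rangle$ with $h(N,r):=Z_{N,r}/Z'_{N,r}$ independent of $X$; since every quantity in the statement is a rational function of $\alpha,\beta,\gamma,\delta,q$, it suffices to argue at generic parameter values, where in particular $Z'_{N,r}\neq 0$.

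To pin down $h(N,r)$ I would induct on $N$. When $N=r$ the only word is $X=A^r$, so $h(r,r)=\langle W|A^r|V\rangle/\langle\WW|\AAA^r|\VV\rangle$; and by Theorem~\ref{comb-interpret}(3), $\langle W|A^r|V\rangle$ is the weight generating function for rhombic staircase tableaux of type $(1,\dots,1)$, but the shape $\Gamma(1,\dots,1)$ is degenerate and its maximal tiling contains no tiles, so there is exactly one such tableau, of weight $1$; thus $\langle W|A^r|V\rangle=1$ and $h(r,r)=1/\langle\WW|\AAA^r|\VV\rangle$. For $N>r$, I would apply relation (II) to an $A$-compatible word $X$ with $|X|=N-1$ and $|X|_A=r$ (so $t=r$ and $\|X\|+1=N+r$), giving $\beta\langle W|XD^{(r)}|V\rangle-\delta\langle W|XE^{(r)}|V\rangle=\lambda_{N+r}\langle W|X|V\rangle$, together with relation (2) for $\DD,\EE$, giving $\beta\langle\WW|\mathcal{X}\DD|\VV\rangle-\delta\langle\WW|\mathcal{X}\EE|\VV\rangle=\langle\WW|\mathcal{X}|\VV\rangle$. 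Substituting $\langle W|\,\cdot\,|V\rangle=h(\,\cdot\,)\,\langle\WW|\,\cdot\,|\VV\rangle$ into the first equation and using the second gives $h(N,r)\,\langle\WW|\mathcal{X}|\VV\rangle=\lambda_{N+r}\,h(N-1,r)\,\langle\WW|\mathcal{X}|\VV\rangle$; choosing $X$ with $\langle\WW|\mathcal{X}|\VV\rangle\neq 0$ (possible since $\sum_X\langle\WW|\mathcal{X}|\VV\rangle=Z'_{N-1,r}\neq 0$) yields $h(N,r)=\lambda_{N+r}\,h(N-1,r)$. Since $\lambda_n=\alpha\beta-\gamma\delta q^{n-1}$, iterating and reindexing by $i=m-r-1$ gives $h(N,r)=h(r,r)\prod_{m=r+1}^N(\alpha\beta-\gamma\delta q^{m+r-1})=(\prod_{i=0}^{N-r-1}(\alpha\beta-\gamma\delta q^{i+2r}))/\langle\WW|\AAA^r|\VV\rangle$.

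Rearranging $\langle W|X|V\rangle=h(N,r)\,\langle\WW|\mathcal{X}|\VV\rangle$ and using $\langle W|A^r|V\rangle=1$ then gives exactly the first displayed identity. For the second identity, I would expand $(\xi\DD+\EE+y\AAA)^N$ and extract the coefficient of $y^r$: the length-$N$ words in $\DD,\EE,\AAA$ containing exactly $r$ factors $\AAA$ are precisely the words $\mathcal{X}$ for $A$-compatible $X$ with $|X|=N$ and $|X|_A=r$, each appearing with coefficient $\xi^{|X|_D}$, so $\mathcal{Z}_{N,r}(\xi)=\sum_X\xi^{|X|_D}\,\langle\WW|\mathcal{X}|\VV\rangle/\langle\WW|\AAA^r|\VV\rangle$. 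Applying the first identity term by term and then Corollary~\ref{cor:ZNr} (which gives $\mathbf{Z}_{N,r}(\xi)=\sum_X\xi^{|X|_D}\,\langle W|X|V\rangle$) yields $\mathcal{Z}_{N,r}(\xi)=\mathbf{Z}_{N,r}(\xi)\prod_{i=0}^{N-r-1}(\alpha\beta-\gamma\delta q^{i+2r})^{-1}$, as desired.

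I expect the only genuinely nonroutine point to be the word-independence of $\langle W|X|V\rangle/\langle\WW|\mathcal{X}|\VV\rangle$ in the first step. It is intuitively clear, since both sides compute an unnormalized form of the same stationary distribution, but making it precise requires invoking both Matrix Ansatz statements and the uniqueness of that distribution, plus a little care that the relevant partition functions do not vanish --- which one obtains at generic parameters and hence, by rationality, in general. Everything after that is routine bookkeeping with the relations of Theorem~\ref{thm:MA} and the definition of $\mathcal{Z}_{N,r}(\xi)$.
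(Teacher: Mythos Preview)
Your proof is correct and follows essentially the same approach as the paper: both establish word-independence of the ratio via Theorem~\ref{thm:MA} (both systems compute the same stationary distribution up to normalization), then induct on $N-r$ using one of the boundary relations to get $h(N,r)=\lambda_{N+r}\,h(N-1,r)$, and finish with Corollary~\ref{cor:ZNr}. The only cosmetic difference is that you use relation~(II) while the paper uses relation~(III), and you are slightly more explicit about the genericity/nonvanishing needed to divide.
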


\begin{proof}
We will prove the first statement of the lemma by induction on $N-r$, 
by comparing relations \eqref{eq:m1} through \eqref{eq:m5} to relations (I) through (V).  
Note that the base case, when $N-r=0$, is trivial.

Now suppose that the lemma holds for all 
$A$-compatible word $X$, where $|X|-|X|_A =k-1$ for some fixed positive $k$.
We want to show that it holds for words $X'$ where 
$|X'|-|X'|_A = k$.

Set $N=|X'|$ and $r=|X'|_A$.
By Theorem \ref{thm:MA}, both $\langle W|X'|V \rangle$ and 
$\langle \WW |\mathcal{X'} | \VV \rangle$ compute 
(unnormalized) steady state probabilities of being in the same
state $\tau$, where $X' = X(\tau)$.  
Therefore 
$\langle W|X'|V \rangle = c_{N,r}
\langle \WW |\mathcal{X'} | \VV \rangle$ for some constant 
$c_{N,r}$ that depends on $N$ and $r$ but not on $X'$.
We want to show that $c_{N,r} = 
\frac{\langle W|A^r|V \rangle}{\langle \WW|\AAA^r|\VV\rangle}
\prod_{i=0}^{N-r-1} (\alpha\beta - q^{i+2r} \gamma \delta)$.

Taking $X'=E^{(0)}X$ and $D^{(0)}X$, respectively, we have that 
\begin{align*}
\alpha \langle W| E^{(0)} X | V\rangle - 
 \gamma \langle W|D^{(0)} X |V\rangle &= 
  c_{N,r} 
\alpha \langle \WW| \EE \mathcal{X} | \VV\rangle - 
 c_{N,r} \gamma \langle \WW| \DD \mathcal{X} |\VV\rangle \\
&= c_{N,r} \langle \WW|\mathcal{X} | \VV \rangle.
\end{align*}

But also by (III), we have that 
$$\alpha \langle W| E^{(0)} X|V \rangle - 
\gamma \langle W|D^{(0)} X|V \rangle = 
\lambda_{N+r} \langle W | X | V \rangle = 
(\alpha \beta - q^{N+r-1} \gamma \delta) \langle W | X | V \rangle.
$$

Therefore we have that 
$$c_{N,r} = \frac{(\alpha \beta - q^{N+r-1} \gamma \delta) 
\langle W|X|V\rangle}{\langle \WW | \mathcal{X}|\VV\rangle},$$
which by the induction hypothesis is equal to 
$\frac{\langle W|A^r|V \rangle}{\langle \WW|\AAA^r|\VV\rangle}
\prod_{i=0}^{N-r-1} (\alpha\beta - q^{i+2r} \gamma \delta)$, as desired.

To prove the second statement of the lemma, note that 
by Corollary \ref{cor:ZNr}, we have that 
$\mathbf{Z}_{N,r}(\xi) = \sum_X \xi^{|X|_D} \langle W| X|V \rangle,$
where the sum is over all $A$-compatible words $X$
such that $|X|=N$ and $|X|_A = r$.
Also note that $\langle W|A^r|V \rangle=1$, since 
there is only one degenerate tableau
of size $(r,r)$, and it has weight $1$.
The second statement of the lemma now follows from the first.
\end{proof}

\bibliographystyle{alpha}
\bibliography{bibliography}

\end{document}